\renewcommand{\d}{\, {\mathrm d}}
\newcommand{\dx}{\, {\mathrm d} x}
\newcommand{\dg}{\, {\mathrm d} \Gamma}
\newcommand{\dS}{\, {\mathrm d} S}
\newcommand{\dt}{\, {\mathrm d} t}
\numberwithin{equation}{section}
\newtheorem{theorem}{Theorem}[section]
\newtheorem{lemma}[theorem]{Lemma}
\newtheorem{proposition}[theorem]{Proposition}
\newtheorem{definition}[theorem]{Definition}
\renewenvironment{proof}{\noindent {\bf Proof.}}{\hfill $\Box$}
\title[Asymptotic analysis of transmission problems]
{Asymptotic analysis of transmission problems with parameter-dependent Robin conditions}
\author[T.\ Fukao]{Takeshi Fukao}
\address{Takeshi Fukao: Faculty of Advanced Science and Technology, Ryukoku University, 
1-5 Yokotani, Seta Oe-cho, Otsu-shi, Shiga 520-2194, Japan}
\email{fukao@math.ryukoku.ac.jp}
\thanks{}
\dedicatory{}
\begin{document}

\thispagestyle{empty}

\begin{abstract} 
We study a transmission problem of {N}eumann--{R}obin type involving a parameter $\alpha$ and perform an asymptotic analysis with respect to $\alpha$.
The limits $\alpha \to 0$ and $\alpha \to +\infty$ correspond respectively to complete decoupling and full unification of the problem, and we obtain rates of convergence for both regimes.
Biologically, the model describes two cells connected by a gap junction with permeability $\alpha$: 
the case $\alpha \to 0$ corresponds to a situation where the gap junction is closed, 
leaving only tight junctions between the cells so that no substance exchange occurs, 
while $\alpha \to +\infty$ corresponds to a situation that can be interpreted as the cells forming a single structure. 
We also clarify the relationship between the asymptotic analysis with respect to the parameter $\alpha$ 
and the asymptotics of the system in connection with the convergence of convex functionals known as {M}osco convergence. 
Finally, we consider time-dependent permeability and analyze the case where $\alpha$ blows up in finite time.
Under suitable regularity assumptions, we show that the solution can be extended beyond the blow-up time, remaining in the single structure regime. 
\smallskip

\noindent {\sc Key words:} transmission problem, {N}eumann--{R}obin, parameter-dependent {R}obin,\\ mixed boundary condition, gap junction.
\smallskip

\noindent {\sc Mathematics Subject Classification 2020:} 
34G25, 
35K20, 
35M13, 
35Q92. 
\end{abstract}
\maketitle

\section{Introduction}
\label{intro}
\setcounter{equation}{0} 

A transmission problem (see, e.g., \cite{Bar98, HW21, Lio71, Nec67, Sch00, Sch00b, Sch00c, Wlo87} for well-posedness results and related topics) is a type of boundary value problem for partial differential equations. 
It involves two domains that share a common surface, 
where multiple boundary conditions are imposed on this surface by incorporating the unknown functions from each domain into the boundary conditions of the other. 
At first glance, this may appear to be overdetermined. 
However, since the so-called given data that determine the mutual boundary conditions are themselves unknown functions, 
the number of conditions is actually well balanced.
In this paper, we impose the so-called {R}obin type conditions as transmission conditions treated in \cite{Att84, Bre22}. 
We focus on the coefficients appearing in the transmission conditions and perform asymptotic analysis. 
\smallskip

One of the research motivations stems from an interesting phenomenon that appears in biology, 
known as \emph{gap junctions}. 
There are characteristic junctions between cells that permit the direct diffusion of ions and small molecules (see \cite{GP09}). 
Cells approach each other at distances of only $2$ to $3$ nanometers ({\rm nm}) and are connected by connexin proteins, 
which assemble into \emph{gap junction} channels. 
Since cells are on the micrometer scale ($1\,\mu{\rm m} = 10^3\,{\rm nm}$), 
the $2$ to $3$ nanometer separation means they can be considered as essentially in contact. 
While biological studies of these junctions have been conducted from a wide range of perspectives, 
our interest lies in the mathematical biology setting, 
where such connections can be modeled as diffusion processes governed by concentration gradients of target substances.
Typical intracellular ions and small molecules are present at millimolar concentrations ($1$\,{\rm mM} $\sim$ $10^{20}$\,molecules per litre). 
Converting this to the scale of a cubic micrometer (the scale of a cellular organelle), 
we obtain (from $1\,{\rm L}=10^3\,{\rm cm}^3=10^{15}\,\mu{\rm m}^3$)
\begin{equation*}
6.022 \times 10^{20}\,{\rm molecules }/{\rm L} = 6.022 \times 10^{5}\,{\rm molecules }/ \mu{\rm m}^3.
\end{equation*}
Thus, even within a single cubic micrometer, 
there are about $10^{5}$ to $10^{6}$ molecules, making the continuum approximation through averaging highly reasonable. 
\smallskip

In particular, the {R}obin type transmission condition provides a natural mathematical representation of a semi-permeable surface.
The flux across the surface is proportional to the difference in concentration between the two domains. 
Moreover, the magnitude of this relation is characterized by the permeability constant.
From this perspective, it is natural to classify related biological situations by the size of the permeability parameter $\alpha$.
For instance, tight junctions seal neighboring cells together and effectively prevent the passage of ions and small molecules. 
Even when the permeability is extremely small or zero and \emph{gap junctions} are closed, tight junctions prevent cells from becoming disconnected from one another. 
Mathematically, this corresponds to the limit $\alpha \to 0$.
At the opposite extreme, the limit $\alpha \to +\infty$ mathematically corresponds to a situation that can be interpreted as the cells forming a single structure, 
though this may not fully represent complete cell fusion where two cells merge into a single continuous cytoplasm. 
\emph{Gap junctions} then occupy the intermediate regime $0 < \alpha < +\infty$, where the permeability is finite and allows controlled exchange between cells.
Biologically, these three processes are distinct and independent phenomena, but in the mathematical framework of transmission problems they can be unified through the asymptotic behavior of a single parameter $\alpha$.
This perspective highlights a novelty of the present work. 
We attempt to bring together biologically independent mechanisms under a common mathematical description by analyzing how the transmission conditions depend on $\alpha$.
The finite case, which is the most relevant for modeling \emph{gap junctions}, has been investigated in detail in the mathematical literature (see, e.g., \cite{Bre22, Suk23}).
\smallskip

Let $T>0$ be the terminal time and $\Omega \subset \mathbb{R}^d$ be a bounded domain occupied by the material, 
with its boundary $\Gamma:=\partial \Omega$, $d \in \mathbb{N}$ with $d \ge 2$. 
Let us assume that the domain $\Omega$ is decomposed into two subdomains $\Omega_1$ and $\Omega_2$ with $|\Omega_i| \ne 0$ for $i=1,2$, where 
$|\Omega_i|$ denotes the measure of $\Omega_i$. The surface between $\Omega_1$ and $\Omega_2$ is named by 
$S$ which is given and fixed, that is, $\Omega:=\Omega_1 \cup S \cup \Omega_2$. In this paper, 
we consider the following two cases: 
\smallskip

\begin{itemize}
\item[Case 1:]
Consider the case where $\Omega_1$ and $\Omega_2$ are subdomains resembling cells found in biology, 
which are in contact at the open surface $S$ and together constitute a domain $\Omega$. 
In this configuration, the boundary of domain $\Omega$ consists of boundaries $\Gamma_1$, $\Gamma_2$, and the boundary of $S$, namely $\Gamma_i$ is defined as $\Gamma_i=\partial \Omega_i \setminus \overline{S}$. 
Here, $\partial \Omega_i=\Gamma_i \cup \partial S \cup S=\Gamma_i \cup \partial \Gamma_i \cup S$. 
However, since triple junctions occur at the contact points, 
it should be noted that even when the subdomains $\Omega_i$ are sufficiently smooth, 
the domain $\Omega$ may possess corners (see, the first figure of {\sc Figure} \ref{fig:case1}). 
Conversely, even when $\Omega$ is sufficiently smooth, 
the subdomains $\Omega_i$ may become non-smooth (i.e., possess corners) 
when separated by the surface $S$ that divides smooth domain into two parts (see, the second figure of {\sc Figure} \ref{fig:case1}). 
\begin{figure}[h]
{\setlength{\unitlength}{1cm} 
\begin{tikzpicture}[scale=1]
\draw[thick,rounded corners=15pt] (-2,0) -- (-2,2) -- (0,2) -- (0,1.6);
\draw[thick,rounded corners=15pt] (-2,0) -- (-2,-2) -- (0,-2) -- (0,-1.6);
\put(-1,0){\makebox(0,0){$\Omega_1$}};
\put(-2.3,0){\makebox(0,0){$\Gamma_1$}};
\draw[thick,rounded corners=15pt] (2,0) -- (2,2) -- (0,2) -- (0,1.6);
\draw[thick,rounded corners=15pt] (2,0) -- (2,-2) -- (0,-2) -- (0,-1.6);
\put(1,0){\makebox(0,0){$\Omega_2$}};
\put(2.3,0){\makebox(0,0){$\Gamma_2$}};
\draw[thin] (0,-1.6) -- (0,1.6); 
\put(0.3,0){\makebox(0,0){$S$}};
\draw[->,>=stealth] (2,1)--(2.5,1);
\put(2.3,1.3){\makebox(0,0){$\boldsymbol{n}$}};
\draw[->,>=stealth] (0,1)--(0.5,1);
\put(0.3,1.3){\makebox(0,0){$\boldsymbol{\nu}$}};
\draw[->,>=stealth] (0,1)--(-0.5,1);
\put(-0.4,1.3){\makebox(0,0){$-\boldsymbol{\nu}$}};
%
%
\draw[thick,rounded corners=15pt] (6,-2) -- (4,-2) -- (4,2) -- (6,2);
\draw[thick,rounded corners=15pt] (6,2) -- (8,2) -- (8,-2) -- (6,-2);
\put(5,0){\makebox(0,0){$\Omega_1$}};
\put(3.7,0){\makebox(0,0){$\Gamma_1$}};
\put(7,0){\makebox(0,0){$\Omega_2$}};
\put(8.3,0){\makebox(0,0){$\Gamma_2$}};
\draw[thin] (6,-2) -- (6,2); 
\put(6.3,0){\makebox(0,0){$S$}};
\draw[->,>=stealth] (8,1)--(8.5,1);
\put(8.3,1.3){\makebox(0,0){$\boldsymbol{n}$}};
\draw[->,>=stealth] (6,1)--(6.5,1);
\put(6.3,1.3){\makebox(0,0){$\boldsymbol{\nu}$}};
\draw[->,>=stealth] (6,1)--(5.5,1);
\put(5.6,1.3){\makebox(0,0){$-\boldsymbol{\nu}$}};
\end{tikzpicture}
\caption{Two subdomains $\Omega_1$ and $\Omega_2$ in contact at surface $S$. Triple junctions occur at the contact points.}
\label{fig:case1}
}
\end{figure}
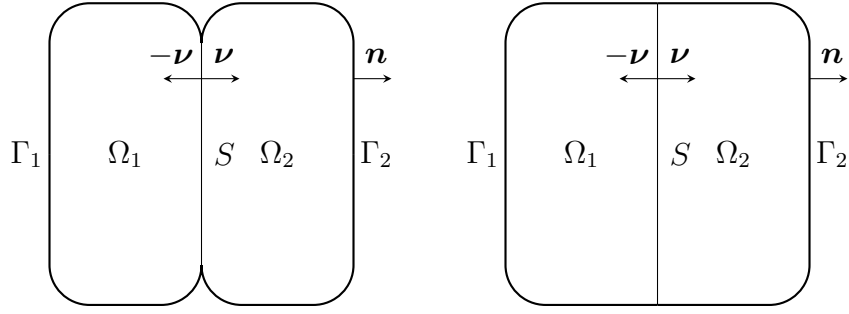

\item[Case 2:]
Next, let us consider the case where $\Omega_1$ is surrounded by $\Omega_2$. 
This situation can be understood more intuitively by considering a cell $\Omega_1$ enclosed by some thin membrane $\Omega_2$. 
If we denote the boundary $\partial \Omega_1$ of $\Omega_1$ as $S$, 
then the surface between $\Omega_1$ and $\Omega_2$ is $S$, 
and the boundary of $\Omega$ is the outermost part $\partial \Omega_2 \setminus S$. 
We denote this as $\Gamma_2$. In this case, $\Gamma_1=\emptyset$. 
For this configuration, the smoothness of each domain can be reasonably assumed, and the regularity issues are expected to be significantly simpler than those in the previous case
(see, {\sc Figure} \ref{fig:case2}). 

\begin{figure}[h]
{\setlength{\unitlength}{1cm} 
\begin{tikzpicture}[scale=1]
\draw[thick,rounded corners=15pt] (0,-2) -- (-2,-2) -- (-2,2) -- (0,2);
\draw[thick,rounded corners=15pt] (0,2) -- (2,2) -- (2,-2) -- (0,-2);
\put(2.3,0){\makebox(0,0){$\Gamma_2$}};
\put(1.75,0){\makebox(0,0){$\Omega_2$}};

\draw[thin,rounded corners=15pt] (0,-1.5) -- (-1.5,-1.5) -- (-1.5,1.5) -- (0,1.5);
\draw[thin,rounded corners=15pt] (0,1.5) -- (1.5,1.5) -- (1.5,-1.5) -- (0,-1.5);
\put(0,0){\makebox(0,0){$\Omega_1$}};
\put(-1.25,0){\makebox(0,0){$S$}};
\draw[->,>=stealth] (2,1)--(2.5,1);
\put(2.3,1.3){\makebox(0,0){$\boldsymbol{n}$}};
\draw[->,>=stealth] (-1.5,1)--(-1.1,1);
\put(-1.1,1.3){\makebox(0,0){$-\boldsymbol{\nu}$}};
\draw[->,>=stealth] (-1.5,1)--(-1.9,1);
\put(-1.75,1.3){\makebox(0,0){$\boldsymbol{\nu}$}};
\end{tikzpicture}
\caption{Subdomain $\Omega_1$ (cell) enclosed within subdomain $\Omega_2$ (membrane). The surface $S = \partial\Omega_1$ separates the domains, and $\Gamma_1 = \emptyset$ while $\Gamma_2 = \partial\Omega_2 \setminus S$.}
\label{fig:case2}
}
\end{figure}
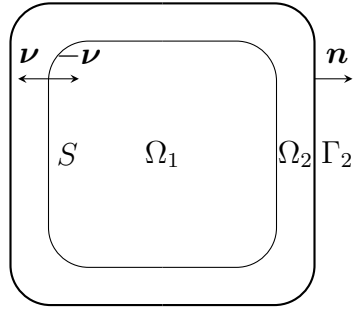
\end{itemize}
\vspace*{-0.2cm}
In this paper, we consider the following initial-boundary value problem of 
parabolic partial differential equations as a simplified model (cf.\ \cite{Sch00, Sch00b, Sch00c}): 
Find $u:=u(t,x)$, $\xi:=\xi(t,x)$, $v:=v(t,x)$, and $\psi:=\psi(t,x)$ satisfying  
\begin{align}
	\partial _t u - \Delta u + \xi+ \pi_1(u) =g_1, \quad \xi \in \beta (u) & 
	\quad {\rm in~} Q_1:=(0,T) \times \Omega_1, \label{heat1}\\
	\partial _t v - \kappa \Delta v + \psi + \pi_2 (v) =g_2, \quad \psi \in \beta (v) & 
	\quad {\rm in~} Q_2:=(0,T) \times \Omega_2, \label{heat2}\\
	\partial_{\boldsymbol{\nu}} u = \alpha(v-u) 
	& \quad {\rm on~} S_T:=(0,T) \times S, \label{jump1} \\
	\kappa \partial_{-\boldsymbol{\nu}} v = \alpha(u-v) 
	& \quad {\rm on~} S_T, \label{jump2} \\
	\partial_{\boldsymbol{n}} u = 0 
	& \quad {\rm on~} \Sigma_1:=(0,T) \times \Gamma_1, \label{bc1}\\
	\kappa \partial_{\boldsymbol{n}} v = 0 
	& \quad {\rm on~} \Sigma_2:=(0,T) \times \Gamma_2, \label{bc2}\\
	u(0) = u_{0\alpha} 
	& \quad {\rm in~} \Omega_1, \label{ini1} \\
	v(0) = v_{0\alpha} 
	& \quad {\rm in~} \Omega_2, \label{ini2}
\end{align}
where $\partial _t u:=\partial u/\partial t$ be the time derivative and 
$\Delta u:=\sum_{i=1}^d \partial ^2 u/\partial x_i^2$ stands for the {L}aplacian;  
$\kappa>0$ be a diffusion coefficient for the subdomain $\Omega_2$; 
$\beta: \mathbb{R} \to 2^{\mathbb{R}}$ be monotone graph possibly multivalued; 
$\pi_1, \pi_2 :\mathbb{R} \to \mathbb{R}$ be 
{L}ipschitz continuous functions, the most popular choice is $\beta(r):=r^3$ and 
$\pi_1(r)=\pi_2(r):=-r$ for $r \in\mathbb{R}$ which corresponds to the {A}llen--{C}ahn equations; 
$\boldsymbol{n}:=(n_1,n_2,\ldots,n_d):\Gamma_i \to \mathbb{R}^d$ be the normal vector outward from $\Omega$, 
$\boldsymbol{\nu}:S \to \mathbb{R}^d$ be the normal vector outward from $\Omega_1$ to $\Omega_2$, 
respectively. Using them, $\partial_{\boldsymbol{n}} u:=\nabla u\cdot \boldsymbol{n}$ 
is defined as the normal derivative, where $\nabla u:=(\partial u/ \partial x_1,\partial u/ \partial x_2,\ldots, \partial u/ \partial x_d)$.
Analogously, $\partial _{\boldsymbol{\nu}} u$ and 
$\partial _{-\boldsymbol{\nu}} v$ are defined;
$g_i: Q_i \to \mathbb{R}$, 
$u_{0\alpha}:\Omega_1 \to \mathbb{R}$, and $v_{0\alpha}: \Omega_2 \to \mathbb{R}$ are given functions. 
This kind of problem is well-known as the transmission problem. Indeed, 
the unknown $u$ in $Q_1$ satisfies the main equation \eqref{heat1} with 
the initial condition \eqref{ini1} and the {N}eumann boundary condition \eqref{bc1}. 
Concerning the transmission condition on $S$, 
\eqref{jump1} is a type of {R}obin boundary condition: 
\begin{equation*}
	\partial_{\boldsymbol{\nu}} u + \alpha u = \alpha v, \quad {\rm i.e.,~} \quad
	\partial_{\boldsymbol{\nu}} u = \alpha(v-u) \quad {\rm on~} S_T.
\end{equation*}
This condition can be interpreted from the viewpoint of domain $\Omega_1$ as follows. 
If the concentration $v$ in the exterior domain $\Omega_2$ exceeds the interior concentration $u$, then inflow takes place across the surface $S$. 
Conversely, outflow occurs when the exterior concentration is lower. 
The magnitude of this transport is governed by the nonnegative constant $\alpha$, which characterizes the permeability coefficient.
Condition \eqref{jump2} describes the same phenomenon from the reverse standpoint.
As a remark,  
merging \eqref{jump1} and \eqref{jump2} we also uncover
\begin{equation}
	\partial_{\boldsymbol{\nu}} u = - \kappa \partial_{-\boldsymbol{\nu}} v= \kappa \partial_{\boldsymbol{\nu}} v
	\quad {\rm on~} S_T. \label{neu}
\end{equation}
This is interpreted as a non-homogeneous {N}eumann boundary condition for $u$ on $S$. 
Therefore, it seems overdetermined. However, 
the function $v$ is also unknown in the right hand side of
\eqref{jump1} and \eqref{neu}. This $v$ is also unknown which is determined by the similar system 
\eqref{heat2}, \eqref{jump2}, \eqref{bc2}, and \eqref{ini2}. 
Therefore, this system is not overdetermined. 
\smallskip

The most important parameter is the permeability coefficient $\alpha$. 
Indeed, we consider the limiting procedures
$\alpha \to 0$ and $\alpha \to +\infty$ as interesting situations, more precisely, we will discuss 
in this paper
\begin{enumerate}
\item[(1)] well-posedness for $\alpha \in (0, +\infty)$; 
\item[(2)] asymptotic analysis as $\alpha \to 0$ and $\alpha \to +\infty$; 
\item[(3)] relationship with {M}osco convergence; 
\item[(4)] well-posedness for the case where $\alpha:=\alpha(t)$, i.e., when $\alpha$ depends on the time variable; 
\item[(5)] well-posedness for the case where the domain of the system undergoes drastic changes, i.e., topological changes as $\alpha(t) \to +\infty$ when $t \to T^*<T$. 
\end{enumerate}   
In the first context, the original system of transmission problems corresponds to \emph{gap junctions} with the permeability $\alpha \in (0,+\infty)$. 
In the second, when $\alpha \to 0$, the target $(u,v)$ of solutions $(u_\alpha, v_\alpha)$ obtained in the first context  satisfies two systems in $\Omega_1$ and $\Omega_2$ that are completely independent. 
This corresponds to that junctions are closed. 
When $\alpha \to +\infty$, the system of partial differential equations merges into a single 
equation in $\Omega=\Omega_1 \cup S \cup \Omega_2$ and the target is the 
strong solution of the heat equation on $\Omega$ if $\kappa=1$. 
In this sense, the asymptotic analysis 
$\alpha \to +\infty$ corresponds to the single structure. 
See also \cite{CD96, GL19} 
for the asymptotic analysis of the single problem with mixed boundary conditions of {N}eumann--{R}obin type to 
{D}irichlet--{N}eumann type.
In the last, we consider a dynamics where the domain initially consists of two regions $\Omega_1$
and $\Omega_2$, that are in contact with each other. As the dynamics evolves, 
these contact regions merge into a single domain, 
undergoing a topological change to become $\Omega$ in a sense. 
\smallskip

In the previous research by {H}\'edy {A}ttouch \cite{Att84}, as an asymptotic analysis of the above three types of problems, he considered the zero thickness limit $\varepsilon$ and the zero constant limit of the diffusion coefficient $\lambda$ (in this paper, we will use $\lambda$ as a different approximation parameter) within the thin film region. 
In a transmission problem involving two subdomains and the thin film region sandwiched between them, he investigated the respective asymptotic behaviors according to the limiting value $\alpha$ of the ratio $\lambda/\varepsilon$. The reason for choosing $\alpha$ as the variable for permeability in this paper comes from this work. 
Therefore, the fundamental and significant research has already been established by \cite{Att84}.
The present paper extends this analysis by investigating the interrelations among the three problems through the asymptotic behavior of the parameter $\alpha$. 
\smallskip

At the end of this introduction, we discuss the relationship between dynamic boundary conditions, which have been actively studied in recent years, and related areas. 
Dynamic boundary conditions refer to boundary conditions that include time derivatives. 
It is well established in the literature \cite{CR90, GLR23, Lie13, Miu25} that dynamic boundary conditions arise when taking the zero thickness limit in the domain $\Omega_2$ 
shown in 
{\bf Fig.}~\ref{fig:case2} 
for transmission problems, and such problems are closely related to this type of analysis. 
In this sense, this paper is based on the essential motivation of the study \cite{CFL19} on the {A}llen--{C}ahn equation with dynamic boundary conditions. 
Focusing on the {C}ahn--{H}illiard equation, in particular, three types of problems have attracted attention. 
The {GMS} model \cite{CF15, Gal06, GMS11}, the {LW} model \cite{CFW20, LW19}, and the {KLLM} model  \cite{KLLM21}, which is an intermediate problem between them. 
While we defer the derivation of each model to previous studies, one notable point is that in the {KLLM} model, a {R}obin type boundary condition is used to express the {KLLM} model positioned as an intermediate case. 
Similar to the parameter $\alpha$ representing permeability mentioned above, using a single parameter, it was proven in \cite{KLLM21} that asymptotic analysis with respect to $0$ and $+\infty$ leads to convergence to the {LW} model and {GMS} model, respectively. 
This shares precisely the same perspective as the asymptotic analysis that this paper focuses 
on---or rather, it draws inspiration from \cite{KLLM21} (see also the conclusion of this paper).

\section{Mathematical formulation of the problem}

In this section, 
we present the mathematical framework for the problem under consideration. 
Let us begin by establishing the notation and preliminaries that will be employed throughout this work. 

\subsection{Notation and Preliminaries}
We use the standard notation for function spaces. Let $\Omega \subset \mathbb{R}^d$ be a bounded domain and its boundary $\partial \Omega$ is at least $C^{0,1}$-class. 
In the following, sets are written as $\Omega$, but the notation is used by replacing it with $\Omega_i$ according to each context. 
Let $L^2(\Omega)$, $H^m(\Omega)$ for $m=1,2$ be the {L}ebesgue and {S}obolev spaces with the standard norm 
$\| \cdot \|_{X}$ and inner product $(\cdot, \cdot)_{X}$ with $X:=L^2(\Omega)$, $H^1(\Omega)$, and  $H^2(\Omega)$, respectively. 
Hereafter, we use the notation for norm and inner product corresponding to the {B}anach and 
{H}ilbert space $X$. Then, the dense and compact imbeddings $H^2(\Omega) \hookrightarrow H^1(\Omega) \hookrightarrow L^2(\Omega)$ hold. 
\smallskip

Next, we recall useful facts for the trace theory, which 
can be found in many literatures (see, e.g., \cite{BG87, GR86, HW21, Leo23, Nec67, Wlo87}). 
There exists 
a unique linear continuous operator (first trace) $\gamma_0: H^1(\Omega) \to H^{1/2}(\partial \Omega)$ such that 
\begin{equation*}
	\gamma_0 u = u_{|_{\partial \Omega}} \quad {\rm on~}S, 
	\quad {\rm for~all~} u \in C^{\infty}(\overline{\Omega}) \cap H^1(\Omega), 
\end{equation*}
where, $u_{|_{\partial \Omega}} $ stands for the restriction of $u$ to $\partial \Omega$, and 
we use the fractional {S}obolev space $H^{1/2}(\partial \Omega)$ for the trace theory. 
Moreover, there exists a linear continuous operator (it is called the extension or recovery of trace) ${\mathcal R}: H^{1/2}(\partial \Omega) \to H^1(\Omega)$ such that
\begin{equation*}
	\gamma_0 {\mathcal R} w = w 
	\quad {\rm for~all~} w \in H^{1/2} (\partial \Omega). 
\end{equation*}
In many cases, for $u \in H^1(\Omega)$ we simply denote $\gamma_0 u$ by $u$ not only 
in $\Omega$ but also on $\partial \Omega$. 
Next, fact concerns the normal component of trace from $\boldsymbol{L}^2_{\rm div}(\Omega):=\{ \boldsymbol{u} \in \boldsymbol{L}^2(\Omega):=L^2(\Omega)^d : {\rm div} \boldsymbol{u} \in L^2(\Omega)\}$ 
as the third trace theory, where ${\rm div} \boldsymbol{u}:=\sum_{i=1}^d \partial u_i/\partial x_i$. 
There exists 
a unique linear continuous operator (third trace) $\gamma_{\rm N}: \boldsymbol{L}^2_{\rm div}(\Omega) \to H^{-1/2}(\partial \Omega)$ such that 
\begin{equation*}
	\gamma_{\rm N} \boldsymbol{u} = (\boldsymbol{u}\cdot \boldsymbol{n})_{|_{\partial \Omega}} 
	\quad {\rm for~all~} \boldsymbol{u} \in \boldsymbol{C}^{\infty}(\overline{\Omega}) \cap \boldsymbol{L}^2_{\rm div}(\Omega), 
\end{equation*}
where $\boldsymbol{n}$ is the outward normal vector, (the case of this paper, 
$\boldsymbol{n}$ or $\boldsymbol{\nu}$ depends on the situation).  
We are also interested in $H^{-1/2}(\partial \Omega)$, the dual space of $H^{1/2}(\partial \Omega)$ equipped with the dual norm 
\begin{equation*}
	\| w^* \|_{H^{-1/2}(\partial \Omega)} := \sup_{\scriptsize \substack{ w \in H^{1/2}(\partial \Omega) \\ \|w\|_{H^{1/2}(\partial \Omega)}  = 1} } 
	\bigl| \langle w^* , w \rangle_{H^{-1/2}(\partial \Omega), H^{1/2}(\partial \Omega)}\bigr| \quad {\rm for~} w^* \in H^{-1/2}(\partial \Omega).  
\end{equation*}
In many cases, we simply denote $\gamma_{\rm N} \boldsymbol{u}$ by $\boldsymbol{u} \cdot \boldsymbol{n}$.  Thanks to this, for the normal derivative we see that 
if $u \in H^1(\Omega)$ and $ \Delta u (={\rm div} \nabla u) \in L^2(\Omega)$, then the following generalized {G}reen formula holds: 
\begin{gather}
	\langle \gamma_{\rm N} \nabla u , y \rangle_{H^{-1/2}(\partial \Omega), H^{1/2}(\partial \Omega)}= 
	\int_\Omega \Delta u 
	y\dx
	+
	\int_{\Omega} \! \nabla u \cdot \nabla y \dx  \quad {\rm for~} y \in H^1(\Omega), 
	\label{Green}\\
	\langle \gamma_{\rm N} \nabla u , w \rangle_{H^{-1/2}(\partial \Omega), H^{1/2}(\partial \Omega)}= 
	\int_\Omega \Delta u 
	{\mathcal R} w \dx
	+
	\int_{\Omega} \! \nabla u \cdot \nabla {\mathcal R} w \dx  \quad {\rm for~} w \in H^{1/2}(\partial \Omega). 
	\label{Greenw}
\end{gather}
While notation $\partial_{\boldsymbol{n}} u$ is generally preferred as 
the normal derivative, expression $\gamma_{\rm N} \nabla u$ is deliberately employed here in order to 
distinct the element $\partial_{\boldsymbol{n}} u \in L^2(\partial \Omega)$ or $\gamma_{\rm N} \nabla u \in H^{-1/2}(\partial \Omega)$, 
(see, e.g., \cite[Corollary 2.6]{GR86}, \cite[Lemma 5.1.1]{HW21}, or \cite[Th\'eor\`em 1.1, Chapitre 3]{Nec67}). 
The above discussion holds not only for $\Omega$ but also when replaced with $\Omega_1$ or $\Omega_2$. 
\smallskip

In order to discuss the mixed boundary condition for Case~1 
(see, {\sc Figure} \ref{fig:case1}), 
we additionally need to recall the trace space on an open surface $S \subset \partial \Omega_1$ (resp.\ $\Gamma_1\subset \partial \Omega_1$). 
Hereafter, we assume that $\partial S(=\partial \Gamma_i)$ is $C^{0,1}$-class.   
Follows from \cite[pp.189--190]{HW21}, we define {L}ions--{M}agenes--{S}trichartz space 
(see, also \cite{Gri85, Str67, LM70})
\begin{equation*}
	\tilde{H}^{1/2}(S)
	:=
	\bigl\{ w \in H^{1/2}(\partial \Omega_1) : {\rm supp} w \subset \overline{S} \bigr\}
	=
	\bigl\{ w \in L^2(S) : \tilde{w} \in H^{1/2}(\partial \Omega_1) \bigr\}
	 \bigl(= H^{1/2}_{00}(S)\bigr)
\end{equation*}
as a subspace of $H^{1/2}(\partial \Omega_1)$,  
where $\tilde{w}$ is a natural $0$-extension to $\partial \Omega_1$. The norm is defined by 
\begin{equation*}
	\| w \|_{\tilde{H}^{1/2}(S)} := \| \tilde{w} \|_{H^{1/2}(\partial \Omega_1)} 
	\quad {\rm for~} w \in \tilde{H}^{1/2}(S). 
\end{equation*}
Then, we see that $\tilde{H}^{1/2}(S) \subset H^{1/2}(\partial \Omega_1)$ holds. 
We also define the space 
\begin{gather*}
	H^{1/2} (S) := \bigl\{ w_{|_S} : w \in H^{1/2} (\partial \Omega_1) \bigr\}
\end{gather*}
equipped with the norm 
\begin{equation*}
	\| w \|_{H^{1/2}(S)} := \inf _{\scriptsize
	\substack{\bar{w} \in H^{1/2} (\partial \Omega_1)\\ \bar{w}_{|_S}=w }} \| \bar{w} \|_{H^{1/2}(\partial \Omega_1)} 
	\quad {\rm for~} w \in H^{1/2}(S). 
\end{equation*}
Then, we see that $\tilde{H}^{1/2}(S) \subset H^{1/2}(S)$ holds. 
Note that while the notation $(H^{1/2}_{00}(S))'$ is sometimes used to represent the dual space of $\tilde{H}^{1/2}(S)$, in this paper following \cite[pp.189--190]{HW21} we denote $H^{-1/2}(S)$ as the dual space of $\tilde{H}^{1/2}(S)(=H_{00}^{1/2}(S))$. 
Finally we see that 
if $w^* \in H^{-1/2}(\partial \Omega_1)$, then $w^*$ makes sense as the element of $H^{-1/2}(S)$ 
as follows:
\begin{equation}
	\bigl\langle (w^*)_{|_S}, w \bigr\rangle_{H^{-1/2}(S), 
	\tilde{H}^{1/2}(S)}:=\langle w^*, \tilde{w} 
	\rangle_{H^{-1/2}(\partial \Omega_1), H^{1/2}(\partial \Omega_1)} 
	\quad {\rm for~} w \in \tilde{H}^{1/2}(S). 
	\label{howto}
\end{equation}
The symbol $(w^*)_{|_S}$ stands for the restriction of $w^*$ to $\tilde{H}^{1/2}(S)$. 
Of course the above definition is independent of $i=1,2$. 
More detail, we can find literatures 
\cite[pp.1.56--1.60]{BG87}, 
\cite[pp.961--964]{FG97}, 
\cite[p.33]{Gri85},  
\cite[pp.189--190]{HW21}, 
\cite[p.249]{Leo23}.  
As a remark, if we consider Case~2 (see, {\sc Figure}~\ref{fig:case2}), 
we do not need such an 
intricate settings, because if $S = \partial \Omega_1$, then $\tilde{H}^{1/2}(S)=H^{1/2}(S)$. 
\smallskip

\subsection{Transmission problem corresponding to gap junctions}

In this subsection, we will discuss the well-posedness for the problem corresponding to $\alpha \in (0,+\infty)$. The result is based on the standard theory of evolution equation governed by the 
subdifferential (see, e.g., \cite{Att84, Bar10, Bre73}). 
\smallskip

Throughout of this paper, we assume that:
\begin{enumerate}
\item[(A0)] $\Omega$, $\Omega_1$, and $\Omega_2$ are bounded and at least $C^{0,1}$-class, $\partial S$ is also $C^{0,1}$-class; 
\item[(A1)] $\beta$ is a maximal monotone graph in $\mathbb{R} \times \mathbb{R}$, which
coincides with the subdifferential $\beta = \partial j$ of 
some proper, lower semicontinuous, and convex function 
$j: \mathbb{R} \to [0,+\infty]$ such that 
$j(0)=0$, with the effective domain $D(\beta) \subset \mathbb{R}$; 
\item[(A2)] $\pi_i:\mathbb{R} \to \mathbb{R}$ are {L}ipschitz continuous with {L}ipschitz constants $L_i>0$, and 
$\pi_i(0)=0$ hold for $i=1,2$; 
\item[(A3)] $g_i \in L^2(0,T;L^2(\Omega_i))$ for $i=1,2$, $u_{0\alpha} \in H^1(\Omega_1)$ with 
$j(u_{0\alpha}) \in L^1(\Omega_1)$ 
 and $v_{0\alpha} \in H^1(\Omega_2)$ with $j(v_{0\alpha}) \in L^1(\Omega_2)$. 
\end{enumerate}

The essential assumption {\rm (A0)} means that the triple junction is not a cusp in 
{\sc Figure}~\ref{fig:case1}. 
\smallskip

First, regarding the interactions within cells connected by \emph{gap junction}, the transmission problem for the following system of partial differential equations can be solved: 

\begin{proposition}\label{pro} Let $\alpha \in (0,+\infty)$ be fixed and assume {\rm (A0)}--{\rm (A3)} hold. 
Then, there exists a unique quadruplet $(u_\alpha, \xi_\alpha, v_\alpha, \psi_\alpha)$ of functions 
\begin{gather*}
	u_\alpha \in H^1\bigl(0,T; L^2(\Omega_1) \bigr) \cap L^\infty\bigl(0,T; H^1(\Omega_1) \bigr),
	\quad 
	\Delta u_\alpha, \xi_\alpha \in L^2\bigl(0,T; L^2(\Omega_1) \bigr), \\
	v_\alpha \in H^1\bigl(0,T; L^2(\Omega_2) \bigr) \cap L^\infty\bigl(0,T; H^1(\Omega_2) \bigr),
	\quad 
	\Delta v_\alpha, \psi_\alpha \in L^2\bigl(0,T; L^2(\Omega_2) \bigr)
\end{gather*}
such that 
\begin{align}
	\partial _t u_\alpha - \Delta u_\alpha + \xi_\alpha + \pi_1(u_\alpha) =g_1, \quad 
	\xi_\alpha \in \beta(u_\alpha) & 
	\quad {\it a.e.~in~} Q_1, \label{heat1a}\\
	\partial _t v_\alpha - \kappa \Delta v_\alpha + \psi_\alpha+ \pi_2 (v_\alpha) =g_2, \quad 
	\psi_\alpha \in \beta(v_\alpha) & 
	\quad {\it a.e.~in~} Q_2, \label{heat2a}\\
	\partial_{\boldsymbol{\nu}} u_\alpha = \alpha(v_\alpha-u_\alpha) 
	& \quad {\it a.e.~on~} S_T, \label{jump1a}\\
	\kappa \partial_{-\boldsymbol{\nu}} v_\alpha = \alpha(u_\alpha-v_\alpha) 
	& \quad {\it a.e.~on~} S_T, \label{jump2a}\\
	\partial_{\boldsymbol{n}} u_\alpha = 0 
	& \quad {\it a.e.~on~} \Sigma_1, \label{bc1a}\\
	\kappa \partial_{\boldsymbol{n}} v_\alpha = 0 
	& \quad {\it a.e.~on~} \Sigma_2, \label{bc2a}\\
	u_\alpha(0) = u_{0\alpha} 
	& \quad {\it a.e.~in~} \Omega_1, \label{ini1a}\\
	v_\alpha(0) = v_{0\alpha} 
	& \quad {\it a.e.~in~} \Omega_2. \label{ini2a}
\end{align}
\end{proposition}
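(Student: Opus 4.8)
The plan is to recast the coupled transmission problem \eqref{heat1a}--\eqref{ini2a} as a single abstract Cauchy problem in the product Hilbert space $\mathcal{H}:=L^2(\Omega_1)\times L^2(\Omega_2)$ and to apply the standard existence theory for evolution equations governed by subdifferentials (as in \cite{Att84, Bar10, Bre73}). First I would introduce on $\mathcal{H}$, with a suitably weighted inner product $((u,v),(\bar u,\bar v)):=(u,\bar u)_{L^2(\Omega_1)}+\kappa^{-1}(v,\bar v)_{L^2(\Omega_2)}$ (the weight $\kappa^{-1}$ is chosen so the diffusion term becomes symmetric), the convex functional
\begin{equation*}
	\varphi_\alpha(u,v):=\frac12\int_{\Omega_1}|\nabla u|^2\dx+\frac{\kappa}{2}\cdot\frac1\kappa\int_{\Omega_2}|\nabla v|^2\dx+\frac{\alpha}{2}\int_S |v-u|^2\dg+\int_{\Omega_1}j(u)\dx+\frac1\kappa\int_{\Omega_2}j(v)\dx,
\end{equation*}
with effective domain those $(u,v)\in H^1(\Omega_1)\times H^1(\Omega_2)$ with $j(u)\in L^1(\Omega_1)$, $j(v)\in L^1(\Omega_2)$; here the trace $v-u$ on $S$ makes sense by the first trace theorem recalled in the preliminaries. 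One checks $\varphi_\alpha$ is proper, convex and lower semicontinuous on $\mathcal{H}$: lower semicontinuity of the gradient and trace terms follows from weak lower semicontinuity of norms together with the compact trace embedding, and the $j$-terms by Fatou together with convexity, exactly as in the classical Allen--Cahn-with-dynamic-boundary setting of \cite{CFL19}.

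The key step is to identify the subdifferential $\partial\varphi_\alpha$ and verify that the characterization \eqref{Green} of the normal trace reproduces precisely the Robin/transmission conditions \eqref{jump1a}--\eqref{bc2a}. Concretely, I would show that $(u,v)\in D(\partial\varphi_\alpha)$ with $(f_1,f_2)\in\partial\varphi_\alpha(u,v)$ if and only if $u\in H^1(\Omega_1)$, $v\in H^1(\Omega_2)$ with $\Delta u\in L^2(\Omega_1)$, $\Delta v\in L^2(\Omega_2)$, there exist selections $\xi\in\beta(u)$ a.e., $\psi\in\beta(v)$ a.e., and
\begin{equation*}
	f_1=-\Delta u+\xi,\quad f_2=-\kappa\Delta v+\kappa\psi\cdot\frac1\kappa \ \text{(i.e. }-\Delta v+\psi\text{ in the weighted norm)},
\end{equation*}
together with the boundary identities $\partial_{\boldsymbol{n}}u=0$ on $\Gamma_1$, $\partial_{\boldsymbol{n}}v=0$ on $\Gamma_2$, and $\partial_{\boldsymbol{\nu}}u=\alpha(v-u)$, $\kappa\partial_{-\boldsymbol{\nu}}v=\alpha(u-v)$ on $S$. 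The inclusion "$\supset$'' is a direct computation using the generalized Green formula \eqref{Green} on each $\Omega_i$ together with monotonicity of $\beta$; the inclusion "$\subset$'' requires an elliptic regularity argument to upgrade a variational solution of the associated Robin--Neumann problem to the stated $H^1\cap\{\Delta\in L^2\}$ regularity, for which I would invoke (A0) and the trace machinery on the open surface $S$ recalled in Section~2. With this identification in hand, the problem \eqref{heat1a}--\eqref{ini2a} becomes $\frac{d}{dt}(u_\alpha,v_\alpha)+\partial\varphi_\alpha(u_\alpha,v_\alpha)\ni(g_1,g_2-?)$ — more precisely, moving the Lipschitz perturbations $\pi_1(u),\pi_2(v)$ to the right-hand side — so that the inhomogeneity is $(g_1-\pi_1(u_\alpha), g_2-\pi_2(v_\alpha))$, a Lipschitz perturbation of an $L^2(0,T;\mathcal{H})$ datum.

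Then existence and uniqueness follow from the standard result: for $\varphi_\alpha$ proper, convex, l.s.c.\ on the Hilbert space $\mathcal{H}$, any initial datum in $D(\varphi_\alpha)$, and right-hand side in $L^2(0,T;\mathcal{H})$, there is a unique strong solution with $(u_\alpha,v_\alpha)\in H^1(0,T;\mathcal{H})$, $t\mapsto\varphi_\alpha(u_\alpha(t),v_\alpha(t))\in L^\infty(0,T)$, and $\partial\varphi_\alpha(u_\alpha,v_\alpha)\in L^2(0,T;\mathcal{H})$; the Lipschitz terms $\pi_i$ are handled by a fixed-point/Gronwall argument in the usual way, and assumption (A3) guarantees $(u_{0\alpha},v_{0\alpha})\in D(\varphi_\alpha)$. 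The regularity $\varphi_\alpha(u_\alpha(\cdot),v_\alpha(\cdot))\in L^\infty(0,T)$ yields $u_\alpha\in L^\infty(0,T;H^1(\Omega_1))$, $v_\alpha\in L^\infty(0,T;H^1(\Omega_2))$, and boundedness of $\int j(u_\alpha),\int j(v_\alpha)$; that $\partial\varphi_\alpha(u_\alpha,v_\alpha)\in L^2(0,T;\mathcal{H})$ combined with the elliptic characterization above gives $\Delta u_\alpha,\xi_\alpha\in L^2(0,T;L^2(\Omega_1))$ and $\Delta v_\alpha,\psi_\alpha\in L^2(0,T;L^2(\Omega_2))$; finally uniqueness is immediate from monotonicity of $\partial\varphi_\alpha$ and Lipschitz continuity of $\pi_i$ via a Gronwall estimate on the difference of two solutions. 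The main obstacle I expect is the "$\subset$'' direction of the subdifferential identification in Case~1 — namely proving that a weak solution of the mixed Neumann--Robin elliptic system on the non-smooth subdomains $\Omega_i$ (with triple junctions on $\partial S$) actually has $\Delta u\in L^2$ and admits the pointwise-a.e.\ boundary conditions \eqref{jump1a}--\eqref{bc1a}; this is where the Lions--Magenes--Strichartz space $\tilde H^{1/2}(S)$ and the restriction formula \eqref{howto} must be used carefully, and where assumption (A0) (no cusp at the triple junction) is essential. In Case~2, where $S=\partial\Omega_1$ is smooth and $\Gamma_1=\emptyset$, this step is routine.
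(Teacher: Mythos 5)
Your overall strategy --- recasting the system as an abstract Cauchy problem governed by a subdifferential on the product space $\mathcal H=L^2(\Omega_1)\times L^2(\Omega_2)$ --- is the same as the paper's, but two points in your setup are genuinely problematic.

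First, the weighted inner product $\kappa^{-1}(v,\bar v)_{L^2(\Omega_2)}$ breaks the transmission condition \eqref{jump2a}. With your weight and your $v$-gradient term (which reduces to $\tfrac12\int_{\Omega_2}|\nabla v|^2\dx$), the subgradient identity in the second slot reads $\kappa^{-1}(f_2,z)_{L^2(\Omega_2)}=\int_{\Omega_2}\nabla v\cdot\nabla z\dx+\alpha\int_S(v-u)z\dS$; this gives $f_2=-\kappa\Delta v$ in the interior, but the natural boundary condition it produces on $S$ is $\partial_{-\boldsymbol{\nu}}v=\alpha(u-v)$, \emph{without} the factor $\kappa$. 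Because the surface term couples both components, you cannot rescale it to fix the $v$-side without simultaneously spoiling the $u$-side condition \eqref{jump1a}. The symmetry you are trying to manufacture is already present without any weight: the unweighted form $a(U,Y)=\int_{\Omega_1}\nabla u\cdot\nabla y\dx+\kappa\int_{\Omega_2}\nabla v\cdot\nabla z\dx+\alpha\int_S(u-v)(y-z)\dS$ is symmetric, and it is precisely the flux-continuity content of \eqref{jump1a}--\eqref{jump2a} that makes the coupled operator self-adjoint in the \emph{unweighted} product. Drop the weight; this is what the paper does (its $\varphi_\alpha$ in \eqref{phi_a} carries the $\kappa$ inside the functional, not in the inner product).

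Second, by absorbing $j$ into the convex functional you have relocated, not removed, the main analytic difficulty, and your proposal does not address it where it actually lives. Writing $\Phi_\alpha=\varphi_\alpha+J$ with $J(U)=\int_{\Omega_1}j(u)\dx+\int_{\Omega_2}j(v)\dx$, the statement you need is the sum rule $\partial\Phi_\alpha(U)=\partial\varphi_\alpha(U)+\partial J(U)$ with \emph{both} summands in $L^2$; only this yields $\Delta u_\alpha,\xi_\alpha\in L^2(0,T;L^2(\Omega_1))$ separately, as the proposition claims. You identify the ``$\subset$'' direction as the obstacle but attribute it to elliptic regularity on the non-smooth subdomains; the real crux is different. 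Proving the sum rule (or, equivalently, the paper's estimate \eqref{est1b}) requires testing the equation by $\beta_\lambda(u_\lambda)$ and $\beta_\lambda(v_\lambda)$ and controlling the resulting surface terms, which individually have no sign. The saving observation --- used implicitly in the paper's Lemma~2.3 --- is that after summing the two equations these terms combine into $\alpha\int_S(u_\lambda-v_\lambda)\bigl(\beta_\lambda(u_\lambda)-\beta_\lambda(v_\lambda)\bigr)\dS\ge 0$ by monotonicity of $\beta_\lambda$. Without this cancellation you cannot close the $L^2$ bound on $\xi_\alpha,\psi_\alpha$, and hence cannot justify the decomposition of $\partial\Phi_\alpha$ nor the stated regularity. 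The paper sidesteps stating a sum rule by keeping $\varphi_\alpha$ purely quadratic (so $D(\varphi_\alpha)=H^1(\Omega_1)\times H^1(\Omega_2)$, which it also reuses for the Mosco analysis), running the Yosida approximation explicitly at the PDE level, and passing to the limit $\lambda\to 0$ with Aubin compactness and demi-closedness of $\beta$. If you keep $j$ inside the functional you must still carry out essentially this approximation argument to verify your claimed characterization of $\partial\Phi_\alpha$; as written, that step is missing.
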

\smallskip

The proof is quite standard. 
The idea is based on the abstract theory of evolution equation on a {H}ilbert space 
$\mathcal H:=L^2 (\Omega_1)\times L^2 (\Omega_2)$. 
Define the inner product
\begin{equation*}
	(U,Y)_{\mathcal H}:=(u,y)_{L^2(\Omega_1)} + (v,z)_{L^2(\Omega_2)}
\end{equation*}
for $U:=(u,v)$ and $Y:=(y,z)$. 
Of course, we can identify 
${\mathcal H}$ by $L^2(\Omega)$ interpreting $U$ by $\tilde{u}+\tilde{v}$. As a remark, 
this identification doesn't work for the case where 
$H^1(\Omega_1) \times H^1(\Omega_2)$, because $\tilde{u}+ \tilde{v} \notin H^1(\Omega)$ even if 
$u \in H^1(\Omega_1)$ and $v \in H^1(\Omega_2)$. Now, for $\alpha>0$ 
we define a proper, lower semi continuous, and convex functional 
$\varphi_\alpha: {\mathcal H} \to [0,+\infty]$ by 
\begin{equation}
	\varphi_\alpha(U)\! := \! 
	\begin{cases}
	\displaystyle 
	\frac{1}{2} \int_{\Omega_1} \! |\nabla u|^2 \dx + 
	\frac{\kappa}{2} \int_{\Omega_2} \! |\nabla v|^2 \dx + 
	\frac{\alpha}{2} \int_{S} |u-v|^2 \dS 
	& {\rm if~} 
	U
	\in H^1(\Omega_1) \! \times \! H^1(\Omega_2)
	, \\
	+\infty & {\rm otherwise,}
	\end{cases}\label{phi_a}
\end{equation}
that is, $D(\varphi_\alpha)=H^1(\Omega_1) \times H^1(\Omega_2)$ which is independent of $\alpha>0$ 
(see, also \cite{GL19}). 
Then, we have the following characterization of the subdifferential, this is a key of the proof 
for {\rm Proposition} \ref{pro}: 

\begin{lemma} \label{chara}
Let $U:=(u,v) \in D(\varphi_\alpha)$ and $U^*:=(u^*,v^*) \in {\mathcal H}$. Then, 
$U^* \in \partial \varphi_\alpha(U)$ in ${\mathcal H}$ if and only if 
\begin{align}
	u^* =- \Delta u & 
	\quad {\it a.e.~in~} \Omega_1, \label{equ}\\
	v^* = - \kappa \Delta v & 
	\quad {\it a.e.~in~} \Omega_2, \label{eqv}\\
	\partial_{\boldsymbol{\nu}} u = \alpha(v-u) 
	& \quad {\it a.e.~on~} S, \label{traSu}\\
	\kappa \partial_{-\boldsymbol{\nu}} v = \alpha(u-v) 
	& \quad {\it a.e.~on~} S, \label{traSv}\\
	\partial_{\boldsymbol{n}} u = 0 
	& \quad {\it a.e.~on~} \Gamma_1, \label{bcu}\\
	\kappa \partial_{\boldsymbol{n}} v = 0 
	& \quad {\it a.e.~on~} \Gamma_2 \label{bcv}.
\end{align}
This means that
\begin{equation*}
	D(\partial \varphi_\alpha)=\left\{ 
	(y, z)
	\in H^1(\Omega_1) \times H^1(\Omega_2) : 
	\begin{gathered}
	\Delta y \in L^2(\Omega_1), \ \Delta z \in L^2 (\Omega_2), \\
	\partial_{\boldsymbol{\nu}} y = \alpha(z-y), \
	\kappa \partial_{-\boldsymbol{\nu}} z = \alpha(y-z) 
	\ {\it a.e.~on~} S, \\
	\partial_{\boldsymbol{n}} y = 0 \ {\it a.e.~on~} \Gamma_1,
	\
	\partial_{\boldsymbol{n}} z = 0 \ {\it a.e.~on~} \Gamma_2.
	\end{gathered}
	\right\}.
\end{equation*}
\end{lemma}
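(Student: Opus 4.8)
\medskip
\noindent\emph{Proof plan.}
The plan is to recast the lemma as a single variational identity and then convert that identity into the boundary-value problem \eqref{equ}--\eqref{bcv} by means of the generalized {G}reen formula \eqref{Green}. Write $V:=H^1(\Omega_1)\times H^1(\Omega_2)=D(\varphi_\alpha)$ and introduce on $V$ the symmetric, bounded and nonnegative bilinear form
\[
	a_\alpha(U,Y):=\int_{\Omega_1}\!\nabla u\cdot\nabla y\dx+\kappa\int_{\Omega_2}\!\nabla v\cdot\nabla z\dx+\alpha\int_S(u-v)(y-z)\dS ,
\]
so that $\varphi_\alpha(U)=\tfrac12 a_\alpha(U,U)$ on $V$. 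Since $\varphi_\alpha$ is quadratic on $V$, for $U,Y\in V$ one has $\varphi_\alpha(Y)-\varphi_\alpha(U)-a_\alpha(U,Y-U)=\tfrac12 a_\alpha(Y-U,Y-U)\ge 0$; probing with $Y=U+tW$ (for $W\in V$, $t\in\mathbb{R}$) and letting $t\to 0^{\pm}$, one sees that for $U\in D(\varphi_\alpha)$ and $U^*\in{\mathcal H}$,
\[
	U^*\in\partial\varphi_\alpha(U)\iff (U^*,W)_{\mathcal H}=a_\alpha(U,W)\ \text{ for all }W=(y,z)\in V .
\]
It therefore suffices to show that this variational identity is equivalent to \eqref{equ}--\eqref{bcv}.

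\smallskip
For the ``only if'' implication I would first probe with $W=(y,0)$, $y\in C_c^\infty(\Omega_1)$: the $\Omega_2$- and surface terms vanish and, by density, $-\Delta u=u^*$ in $\mathcal{D}'(\Omega_1)$; since $u^*\in L^2(\Omega_1)$ this yields $\Delta u\in L^2(\Omega_1)$ and \eqref{equ}, and symmetrically \eqref{eqv}. With $\Delta u\in L^2(\Omega_1)$, $\Delta v\in L^2(\Omega_2)$ in hand, I would apply \eqref{Green} on $\Omega_1$ and on $\Omega_2$ and subtract the resulting identities from the variational identity; using $-\Delta u=u^*$, $-\kappa\Delta v=v^*$ this reduces to
\[
	\langle\gamma_{\rm N}\nabla u,\gamma_0 y\rangle_{\partial\Omega_1}+\kappa\langle\gamma_{\rm N}\nabla v,\gamma_0 z\rangle_{\partial\Omega_2}+\alpha\int_S(u-v)(y-z)\dS=0\quad\text{for all }(y,z)\in V ,
\]
where $\langle\cdot,\cdot\rangle_{\partial\Omega_i}$ denotes the $H^{-1/2}(\partial\Omega_i)$--$H^{1/2}(\partial\Omega_i)$ duality. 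Because $\gamma_0:H^1(\Omega_i)\to H^{1/2}(\partial\Omega_i)$ is surjective (it has the continuous right inverse $\mathcal{R}$), the traces $w:=\gamma_0 y\in H^{1/2}(\partial\Omega_1)$ and $w':=\gamma_0 z\in H^{1/2}(\partial\Omega_2)$ range over their spaces arbitrarily and independently.

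\smallskip
Taking $w'=0$ and writing $\alpha\int_S(u-v)w\dS=\langle\widetilde{\alpha(u-v)},w\rangle_{\partial\Omega_1}$, where $\widetilde{\alpha(u-v)}\in L^2(\partial\Omega_1)\hookrightarrow H^{-1/2}(\partial\Omega_1)$ is the $0$-extension to $\partial\Omega_1$ of $\alpha(u-v)\in L^2(S)$ (recall $(u-v)_{|_{S}}\in H^{1/2}(S)\subset L^2(S)$), I would conclude $\gamma_{\rm N}\nabla u=\widetilde{\alpha(v-u)}$ in $H^{-1/2}(\partial\Omega_1)$. The right-hand side is an $L^2(\partial\Omega_1)$-function that vanishes on $\Gamma_1=\partial\Omega_1\setminus\overline{S}$ and equals $\alpha(v-u)$ on $S$, where the outward normal of $\Omega_1$ is $\boldsymbol{\nu}$; reading off its $\Gamma_1$- and $S$-parts gives \eqref{bcu} and \eqref{traSu}. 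The symmetric choice $w=0$, together with the fact that the outward normal of $\Omega_2$ along $S$ equals $-\boldsymbol{\nu}$, yields \eqref{bcv} and \eqref{traSv}. Conversely, assuming \eqref{equ}--\eqref{bcv}, I would insert them into \eqref{Green} on $\Omega_1$ and $\Omega_2$ and add: the $\Gamma_i$-contributions vanish by \eqref{bcu}--\eqref{bcv}, and the $S$-contributions, evaluated via \eqref{traSu}--\eqref{traSv}, cancel against the penalty term $\alpha\int_S(u-v)(y-z)\dS$, leaving exactly $(U^*,W)_{\mathcal H}=a_\alpha(U,W)$ and hence $U^*\in\partial\varphi_\alpha(U)$. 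The stated formula for $D(\partial\varphi_\alpha)$ then follows at once with $u^*=-\Delta y$ and $v^*=-\kappa\Delta z$ (so that $\Delta y\in L^2(\Omega_1)$, $\Delta z\in L^2(\Omega_2)$ are forced).

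\smallskip
I expect the main obstacle to be the mixed-boundary/corner geometry of Case~1: one has to make rigorous the passage from the single distributional identity $\gamma_{\rm N}\nabla u=\widetilde{\alpha(v-u)}$ in $H^{-1/2}(\partial\Omega_1)$ to the two separate almost-everywhere statements \eqref{bcu} on $\Gamma_1$ and \eqref{traSu} on $S$. This requires localizing elements of $H^{-1/2}(\partial\Omega_1)$ near $\overline{S}$ through the {L}ions--{M}agenes--{S}trichartz space $\tilde{H}^{1/2}(S)$ and the restriction rule \eqref{howto}, and it relies on {\rm (A0)} ($\partial S$ of $C^{0,1}$-class, no cusp) to guarantee that these trace-space identifications are valid; in Case~2, where $S=\partial\Omega_1$ and $\Gamma_1=\emptyset$, the step is immediate since $\tilde{H}^{1/2}(S)=H^{1/2}(S)$.
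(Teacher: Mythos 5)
Your proposal is correct and follows essentially the same route as the paper: derive the variational identity \eqref{weak} from the two-sided perturbation of the subdifferential inequality, localize with $C_c^\infty$ test functions to get \eqref{equ}--\eqref{eqv}, then use the generalized {G}reen formula and the surjectivity of the trace to identify $\gamma_{\rm N}\nabla u$ with the $L^2(\partial\Omega_1)$-function $\widetilde{\alpha(v-u)}$ before splitting it into its $\Gamma_1$- and $S$-parts via \eqref{howto}, exactly as in the paper's steps around \eqref{NDrep}. The only cosmetic difference is that you exploit the quadratic structure of $\varphi_\alpha$ to state the equivalence with the bilinear form $a_\alpha$ up front, and you correctly flag the mixed-boundary localization through $\tilde{H}^{1/2}(S)$ as the delicate point, which is precisely where the paper invests its third step.
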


\begin{proof} 
Let $U \in D(\varphi_\alpha)$ and assume 
$U^*:=(u^*,v^*)  \in \partial \varphi_\alpha(U)$ in ${\mathcal H}$. This implies 
$U \in D(\partial \varphi _\alpha):=\{ U \in D(\varphi_\alpha) : \partial \varphi_\alpha (U) \ne \emptyset \}$. 
From the definition of the subdifferential, we have 
\begin{equation*}
(U^*, Y-U)_{\mathcal H} \le \varphi_\alpha(Y)-\varphi_\alpha(U) \quad {\rm for~all~} Y:=(y,z)  \in {\mathcal H}. 
\end{equation*}
Let $Y \in D(\varphi_\alpha)$ and $\varepsilon>0$, taking $U + \varepsilon Y \in D(\varphi_\alpha)$ as the test function in the above, we get 
\begin{align*}
	& 
	(u^*, \varepsilon y )_{L^2(\Omega_1)} + (v^*, \varepsilon z )_{L^2(\Omega_2)} = 
	(U^*, \varepsilon Y)_{\mathcal H} \\
	& {} \le \frac{1}{2} \int_{\Omega_1} 
	\bigl| \nabla (u+\varepsilon y) \bigr|^2 \dx -
	\frac{1}{2} \int_{\Omega_1}
	|\nabla u|^2 \dx
	+ \frac{\kappa}{2} \int_{\Omega_2}
	\bigl| \nabla (v+\varepsilon z) \bigr|^2 \dx - 
	\frac{\kappa}{2} \int_{\Omega_2}
	|\nabla v|^2 \dx \\
	& \quad {}
	+
	\frac{\alpha}{2} \int_{S} 
	\bigl| (u+\varepsilon y )-(v+\varepsilon z) \bigr|^2 
	\dS
	-
	\frac{\alpha}{2} \int_{S} 
	|u-v|^2\dS
	\\
	& {} = \varepsilon \int _{\Omega_1} \!  \nabla u \cdot \nabla y \dx 	
	+ \frac{\varepsilon^2}{2} \int_{\Omega_1}|\nabla y|^2 \dx 
	+ \varepsilon \kappa \int _{\Omega_2} \!  \nabla v \cdot \nabla z \dx 	
	+ \frac{\varepsilon^2\kappa}{2} \int_{\Omega_2}|\nabla z|^2 \dx \\
	& \quad {} + \varepsilon \alpha \int _{S} (u-v)(y-z) \dS 	
	+ \frac{\varepsilon^2 \alpha}{2} \int_{S} |y-z|^2 \dS. 
\end{align*}
Here, dividing the above by $\varepsilon$ and letting $\varepsilon \to 0$, we obtain the 
inequality 
\begin{equation*}
	(u^*,  y )_{L^2(\Omega_1)} + (v^*, z )_{L^2(\Omega_2)}
	\le \int _{\Omega_1} \! \! \nabla u \cdot \nabla y \dx 	
	+ \kappa \int _{\Omega_2} \! \! \nabla v \cdot \nabla z \dx 	
	+ \alpha \int _{S} (u-v)(y-z) \dS. 
\end{equation*}
Analogously, taking $U-\varepsilon Y \in D(\varphi_\alpha)$ as 
the test function in the above calculation, we obtain the opposite inequality in the above.   
Thus, we finally deduce 
\begin{equation}
	(u^*,  y )_{L^2(\Omega_1)} + (v^*, z )_{L^2(\Omega_2)}
	= \int _{\Omega_1} \! \! \nabla u \cdot \nabla y \dx 	
	+ \kappa \int _{\Omega_2} \! \! \nabla v \cdot \nabla z \dx 	
	+ \alpha \int _{S} (u-v)(y-z) \dS
	\label{weak}
\end{equation}
for all $Y \in D(\varphi_\alpha)$. 
From now, we prove \eqref{equ}--\eqref{bcv} step by step. Firstly, 
for all $y \in {\mathcal D}(\Omega_1)(= C_{0}^\infty(\Omega_1))$ and $z \equiv 0$ in \eqref{weak}, 
we see that $u^* = -\Delta u$ in ${\mathcal D}'(\Omega_1)$ (in the sense of distribution). 
Moreover, 
we know $u ^* \in L^2(\Omega_1)$, therefore from the comparison in the equation we see 
that $u^* = -\Delta u$ in $L^2(\Omega_1)$ and \eqref{equ} holds. 
Analogously, we see 
that $v^* = -\kappa \Delta v$ in $L^2(\Omega_2)$ and \eqref{eqv} holds. 
Secondly, 
for all $y \in H^1(\Omega_1)$ and $z \equiv 0$ in \eqref{weak}, and using \eqref{Green} we have 
\begin{equation*}
	\langle \gamma_{\rm N} \nabla u, y \rangle_{H^{-1/2}(\partial \Omega_1), H^{1/2} (\partial \Omega_1)}
	= - \alpha \int _{S} (u-v) y \dS 
	= \alpha \int _{\partial \Omega_1} (\tilde{v}-\tilde{u}) y \dg,
\end{equation*}
as a remark, $\tilde{v}$ means natural $0$-extension of the trace $\gamma v$ on $S$ to $\partial \Omega_1$. 
Therefore, by the comparison in the equation, we see that 
\begin{equation}
	\bigl( \partial_{\boldsymbol{n}} u= \bigr) \ \gamma_{\rm N} \nabla u =\alpha (\tilde{v}-\tilde{u}) \quad {\rm in~} L^2(\partial \Omega_1). 
	\label{NDrep} 
\end{equation}
Thirdly, recalling \eqref{howto} we define $(\gamma_{\rm N} \nabla u)_{|_{\Gamma_1}}\in H^{-1/2}(\Gamma_1)$ and 
$(\gamma_{\rm N} \nabla u)_{|_{S}} \in H^{-1/2}(S)$ as follows: 
\begin{gather*}
	\bigl \langle (\gamma_{\rm N} \nabla u)_{|_{\Gamma_1}}, w \bigr\rangle_{H^{-1/2}(\Gamma_1), 
	\tilde{H}^{1/2}(\Gamma_1)}:=\langle \gamma_{\rm N} \nabla u, \tilde{w} 
	\rangle_{H^{-1/2}(\partial \Omega_1), H^{1/2}(\partial \Omega_1)} 
	\quad {\rm for~} w \in \tilde{H}^{1/2}(\Gamma_1), \\
	\bigl\langle (\gamma_{\rm N} \nabla u)_{|_S}, w \bigr\rangle_{H^{-1/2}(S), 
	\tilde{H}^{1/2}(S)}:=\langle \gamma_{\rm N} \nabla u, \tilde{w} 
	\rangle_{H^{-1/2}(\partial \Omega_1), H^{1/2}(\partial \Omega_1)} 
	\quad {\rm for~} w \in \tilde{H}^{1/2}(S).
\end{gather*}
Then, for all $w \in \tilde{H}^{1/2}(\Gamma_1)$ and $z \equiv 0$ in \eqref{weak}, 
we know $\tilde{w} \in H^{1/2}(\partial \Omega_1)$ from the definition. Therefore, ${\mathcal R} \tilde{w} \in H^1(\Omega_1)$, $\gamma {\mathcal R} \tilde{w}=0$ a.e.\ in $S$, and 
\begin{equation*}
	- \int_{\Omega_1} \Delta u  {\mathcal R} \tilde{w} \dx  
	= \int _{\Omega_1} \! \nabla u \cdot \nabla {\mathcal R} \tilde{w} \dx 		
	+ \alpha \int _{S} (u-v) \gamma {\mathcal R} \tilde{w} \dS =0.
\end{equation*}
By using \eqref{Greenw}
\begin{equation*}
	\bigl \langle (\gamma_{\rm N} \nabla u)_{|_{\Gamma_1}}, w \bigr\rangle_{H^{-1/2}(\Gamma_1), 
	\tilde{H}^{1/2}(\Gamma_1)}:=\langle \gamma_{\rm N} \nabla u, \tilde{w} 
	\rangle_{H^{-1/2}(\partial \Omega_1), H^{1/2}(\partial \Omega_1)} =0,
\end{equation*}
that is, $(\gamma_{\rm N} \nabla u)_{|_{\Gamma_1}}=0$ in $H^{-1/2}(\Gamma_1)$ namely in $L^2(\Gamma_1)$ by the comparison in the equation. Analogously, we obtain 
$(\gamma_{\rm N} \nabla u)_{|_S}=\alpha(v-u)$ in $L^2(S)$. 
Hereafter, we can write $(\gamma_{\rm N} \nabla u)_{|_{\Gamma_1}}=\partial _{\boldsymbol{n}} u$, 
$(\gamma_{\rm N} \nabla u)_{S} = \partial _{\boldsymbol{\nu}} u$, and compare with the representation 
\eqref{NDrep}, we finally conclude
\begin{equation*}
	\partial _{\boldsymbol{n}} u = 0 \quad {\rm in~}L^2(\Gamma_1), \quad 
	\partial_{\boldsymbol{\nu}} u = \alpha (v-u) \quad {\rm in~}L^2(S), 
\end{equation*}
that is, \eqref{traSu} and \eqref{bcu}. Analogously, \eqref{traSv} and \eqref{bcv} hold. 
\smallskip

Conversely, 
if $U \in H^1(\Omega_1)\times H^1(\Omega_2)$ satisfying \eqref{equ}--\eqref{bcv}. Then, using 
the integration by part  
we can easily prove that $U^*:=(u^*,v^*) =(-\Delta u, -\kappa \Delta v) $ satisfies 
\begin{equation*}
(U^*, Y-U)_{\mathcal H} \le \varphi_\alpha(Y)-\varphi_\alpha(U) \quad {\rm for~all~} Y:=(y,z)  \in D(\varphi_\alpha). 
\end{equation*}
Trivially, this also holds for $Y \in {\mathcal H} \setminus D(\varphi_\alpha)$. Thus, $U \in D(\partial \varphi_\alpha)$ and $U^* \in \partial \varphi_\alpha (U)$ can be obtained.   
\end{proof}
\smallskip

As a remark, in the case where the domain is enough smooth, (for example, $\Omega_1$ is $C^2$-class), 
the following two kind of elliptic estimates hold (see, e.g., \cite{BG87, Gri85, HW21, Nec67, Wlo87}): 
\begin{gather*}
	\| u \|_{H^2(\Omega_1)} \le C_{\rm e}\bigl( \| \Delta u \|_{L^2(\Omega_1)} 
	+ \| u  \|_{H^{3/2}(\partial \Omega_1)} + \| u\|_{L^2(\Omega_1)} \bigr), \\
	\| u \|_{H^2(\Omega_1)} \le C_{\rm e}\bigl( \| \Delta u \|_{L^2(\Omega_1)} 
	+ \| \partial_{\boldsymbol{n}} u  \|_{H^{1/2}(\partial \Omega_1)} + \| u\|_{L^2(\Omega_1)} \bigr).
\end{gather*}
It seems at first glance that, we can gain the $H^2$-regularities in Lemma~\ref{chara}. 
However, in \eqref{traSu} and \eqref{bcu}, we have neither 
$\alpha \tilde{v} - \tilde{\partial_{\boldsymbol{n}} u} \in H^{3/2}(\partial \Omega_1)$
nor  
$\alpha(\tilde{v}-\tilde{u}) \in H^{1/2}(\partial \Omega_1)$. 
This is not due to the smoothness of the domain, 
but rather due to the difficulty of the mixed boundary conditions.  
\smallskip

We recall the {Y}osida approximation
$\beta_\lambda$ of 
$\beta$ for the parameter $\lambda>0$ defined by 
\begin{gather*}
	\beta_\lambda (r):=\frac{1}{\lambda} \bigl( r-J_\lambda (r) \bigr) :=\frac{1}{\lambda} 
	\bigl( r-(I+\lambda \beta) ^{-1}(r)\bigr) \quad {\rm for~} r \in \mathbb{R}.
\end{gather*}
From the theory of maximal monotone (see, e.g., \cite{Bar10, Bre73}), 
we see that $\beta_\lambda$ is monotone and Lipschitz continuous. Moreover, 
from ({\rm A1}) we have $\beta_\lambda(0)=0$. 
Let $\lambda>0$. Apply the {Y}osida approximation $\beta_\lambda$ for $\beta$ in \eqref{heat1a} and  \eqref{heat2a}, respectively. 
From the characterization Lemma \ref{chara}, applying abstract theory of evolution equation 
\cite[Theorem~4.11]{Bar10} or \cite[Theoremes~3.4 and 3.6]{Bre73}
we can solve the {C}auchy problem: Find $U_\lambda:=(u_\lambda, v_\lambda)$ satisfying
\begin{equation}
	\begin{cases}
	U_\lambda'(t) + \partial \varphi_\alpha \bigl( U_\lambda(t) \bigr) 
	+ B_\lambda \bigl( U_\lambda(t) \bigr) = \bigl(g_1(t),g_2(t) \bigr)  \quad 
	{\rm in~}{\mathcal H}, \ {\rm for~a.a.~} t \in (0,T), \\
	U_\lambda(0)=(u_{0\alpha}, v_{0\alpha})  \quad {\rm in~}{\mathcal H}
	\end{cases} \label{ee}
\end{equation}
in the class $U_\lambda \in H^1(0,T; {\mathcal H}) \cap L^\infty(0,T;D(\varphi_\alpha)) \cap L^2(0,T;D(\partial \varphi_\alpha))$, 
where 
$B_\lambda(U_\lambda):=(\beta_\lambda(u_\lambda)+\pi_1(u_\lambda), \beta_\lambda(v_\lambda)+\pi_2(v_\lambda))$.  
We can apply the standard idea (see, e.g., \cite[pp.10--11, pp.105--108]{Bre73}, 
\cite[Proposition~3.1]{CF15a}, or \cite[Proposition~4.1]{CF15}) to treat the Lipschitz perturbation $B_\lambda$. The uniqueness is also 
standard, we omit its proof. 
Hereafter, for the each component $u_\lambda$ and $v_\lambda$
of the above solution $U_\lambda$, we 
obtain the uniform estimate independent of $\lambda>0$. 
\smallskip

\begin{lemma}\label{1st} There exist constants $M_1, M_2, M_3>0$ independent of $\lambda>0$ and 
$\alpha \in (0,+\infty)$ such that 
\begin{align}
	& \| u_\lambda \|_{L^\infty(0,T;L^2(\Omega_1))} 
	+ \| v_\lambda \|_{L^\infty(0,T;L^2(\Omega_2))}
	\le 
	\|u_{0\alpha}\|_{L^2(\Omega_1)} + 
	\|v_{0\alpha}\|_{L^2(\Omega_2)} + 
	M_1,\label{est1}\\
	& \| u_\lambda \|_{L^2(0,T;H^1(\Omega_1))} 
	+ \kappa \| v_\lambda \|_{L^2(0,T;H^1(\Omega_2))} 
	+ \sqrt{\alpha} \| u_\lambda-v_\lambda \|_{L^2(0,T;L^2(S))} 
	\notag \\
	& {} 
	\le 
	M_2 \bigl( 1+ 
	\|u_{0\alpha}\|_{L^2(\Omega_1)} + 
	\|v_{0\alpha}\|_{L^2(\Omega_2)} \bigr),\label{est1a}
	\\
	& \bigl\| \beta_\lambda (u_\lambda) \bigl\|_{L^2(0,T;L^2(\Omega_1))} 
	+
	\bigl\| \beta_\lambda (v_\lambda) \bigl\|_{L^2(0,T;L^2(\Omega_2))} 
	\notag \\
	& {} \le M_3\Bigl( 1+ \|u_{0\alpha}\|_{L^2(\Omega_1)} 
	+ 
	\|v_{0\alpha}\|_{L^2(\Omega_2)}
	+
	\bigl\| j(u_{0\alpha}) \bigr\| _{L^1(\Omega_1)}^{1/2}
	+
	\bigl\| j(v_{0\alpha}) \bigr\| _{L^1(\Omega_2)}^{1/2} \Bigr).
	\label{est1b}
\end{align}
\end{lemma}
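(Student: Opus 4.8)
The plan is to obtain \eqref{est1}--\eqref{est1b} as two successive energy estimates for the Cauchy problem \eqref{ee}, using that the solution $U_\lambda$ lies in $H^1(0,T;{\mathcal H})\cap L^\infty(0,T;D(\varphi_\alpha))\cap L^2(0,T;D(\partial\varphi_\alpha))$, so that all the multiplier computations below are rigorous: in particular $t\mapsto\varphi_\alpha(U_\lambda(t))$ is absolutely continuous with $\frac{d}{dt}\varphi_\alpha(U_\lambda(t))=(\partial\varphi_\alpha(U_\lambda(t)),U_\lambda'(t))_{\mathcal H}$, the characterization of Lemma~\ref{chara} is available, and the generalized {G}reen formula \eqref{Green} applies on $\Omega_1$ and on $\Omega_2$. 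In what follows $C$ denotes a positive constant depending only on $T$, $\kappa$, $L_1$, $L_2$ and $\|g_i\|_{L^2(0,T;L^2(\Omega_i))}$, but never on $\lambda$ or $\alpha$; we also use the {M}oreau--{Y}osida regularization $\hat{j}_\lambda$ of $j$ (so $\hat{j}_\lambda'=\beta_\lambda$ and $0\le\hat{j}_\lambda\le j$ on $\mathbb{R}$), which exists by (A1).

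\emph{First energy estimate} (yielding \eqref{est1} and \eqref{est1a}). I would test \eqref{ee} by $U_\lambda(t)$ in ${\mathcal H}$. Then $(U_\lambda'(t),U_\lambda(t))_{\mathcal H}=\frac12\frac{d}{dt}\|U_\lambda(t)\|_{\mathcal H}^2$, and the identity \eqref{weak} with $Y=U_\lambda$ gives $(\partial\varphi_\alpha(U_\lambda(t)),U_\lambda(t))_{\mathcal H}=2\varphi_\alpha(U_\lambda(t))=\|\nabla u_\lambda\|_{L^2(\Omega_1)}^2+\kappa\|\nabla v_\lambda\|_{L^2(\Omega_2)}^2+\alpha\|u_\lambda-v_\lambda\|_{L^2(S)}^2$, where the surface term carries a favorable sign. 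Since $\beta_\lambda$ is monotone with $\beta_\lambda(0)=0$ we have $(\beta_\lambda(u_\lambda),u_\lambda)_{L^2(\Omega_1)}\ge0$ and likewise for $v_\lambda$, while $|(\pi_i(w),w)|\le L_i\|w\|^2$ by (A2), and the source is absorbed by {Y}oung's inequality; {G}ronwall's lemma then yields \eqref{est1}. Reinserting \eqref{est1} after integrating the dissipation $2\varphi_\alpha(U_\lambda)$ over $(0,T)$, and adjoining the $L^2$-in-space contributions (already controlled by \eqref{est1}) to the gradient norms, gives \eqref{est1a}; the factor $\sqrt{\alpha}$ appears on the left exactly as stated because of the form of $\varphi_\alpha$ in \eqref{phi_a}, and $M_2$ is allowed to depend on the fixed $\kappa$.

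\emph{Second energy estimate} (yielding \eqref{est1b}). I would test \eqref{ee} by $\bigl(\beta_\lambda(u_\lambda),\beta_\lambda(v_\lambda)\bigr)\in{\mathcal H}$. The time term becomes $\frac{d}{dt}\bigl(\int_{\Omega_1}\hat{j}_\lambda(u_\lambda)\dx+\int_{\Omega_2}\hat{j}_\lambda(v_\lambda)\dx\bigr)$. For the principal part, Lemma~\ref{chara} gives $\partial\varphi_\alpha(U_\lambda)=(-\Delta u_\lambda,-\kappa\Delta v_\lambda)$ with the boundary relations \eqref{traSu}--\eqref{bcv}; applying \eqref{Green} componentwise with test functions $\beta_\lambda(u_\lambda)\in H^1(\Omega_1)$, $\beta_\lambda(v_\lambda)\in H^1(\Omega_2)$, the bulk parts equal $\int_{\Omega_1}\beta_\lambda'(u_\lambda)|\nabla u_\lambda|^2\dx+\kappa\int_{\Omega_2}\beta_\lambda'(v_\lambda)|\nabla v_\lambda|^2\dx\ge0$ (chain rule for the {L}ipschitz map $\beta_\lambda$, and $\beta_\lambda'\ge0$), the integrals over $\Gamma_1$ and $\Gamma_2$ vanish by \eqref{bcu}--\eqref{bcv}, and the two integrals over $S$ combine, via \eqref{traSu}--\eqref{traSv}, into $\alpha\int_S(u_\lambda-v_\lambda)\bigl(\beta_\lambda(u_\lambda)-\beta_\lambda(v_\lambda)\bigr)\dS\ge0$ by monotonicity of $\beta_\lambda$; hence the whole $\partial\varphi_\alpha$-contribution is nonnegative and free of $\alpha$. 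The $B_\lambda$-term produces $\|\beta_\lambda(u_\lambda)\|_{L^2(\Omega_1)}^2+\|\beta_\lambda(v_\lambda)\|_{L^2(\Omega_2)}^2$ together with cross terms $(\pi_i(\cdot),\beta_\lambda(\cdot))$ that {Y}oung's inequality absorbs into half of these squares at the cost of $C(\|u_\lambda\|_{L^2(\Omega_1)}^2+\|v_\lambda\|_{L^2(\Omega_2)}^2)$, and $(g_1,g_2)$ is absorbed the same way. Integrating over $(0,T)$, discarding the nonnegative terminal values $\hat{j}_\lambda(u_\lambda(T))$ and $\hat{j}_\lambda(v_\lambda(T))$, estimating $\hat{j}_\lambda(u_{0\alpha})\le j(u_{0\alpha})$ and $\hat{j}_\lambda(v_{0\alpha})\le j(v_{0\alpha})$, and inserting \eqref{est1}, one obtains \eqref{est1b} after taking square roots.

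The main obstacle is to keep every constant independent of $\alpha$. This forces the bound on $\beta_\lambda$ to be derived by testing with $\bigl(\beta_\lambda(u_\lambda),\beta_\lambda(v_\lambda)\bigr)$ rather than with $U_\lambda'(t)$: the latter, more customary multiplier would bring $\varphi_\alpha(U_\lambda(0))$ to the right-hand side, and its term $\frac{\alpha}{2}\|u_{0\alpha}-v_{0\alpha}\|_{L^2(S)}^2$ is not controlled uniformly in $\alpha$ under (A3). With the present choice the surface coupling enters only through the manifestly nonnegative quantity $\alpha\int_S(u_\lambda-v_\lambda)(\beta_\lambda(u_\lambda)-\beta_\lambda(v_\lambda))\dS$, which can simply be dropped, exactly as $\alpha$ appeared only in the dissipation in the first estimate. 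A routine but necessary point to check along the way is the chain rule $\nabla\bigl(\beta_\lambda(u_\lambda)\bigr)=\beta_\lambda'(u_\lambda)\nabla u_\lambda$ a.e.\ for the {L}ipschitz function $\beta_\lambda$ and $u_\lambda\in H^1(\Omega_1)$, together with the fact---already established in Lemma~\ref{chara}---that $\partial_{\boldsymbol{n}}u_\lambda$ and $\partial_{\boldsymbol{\nu}}u_\lambda$ are genuine $L^2$ functions on $\Gamma_1$ and $S$, which is what allows the boundary integral over $\partial\Omega_1$ to be split into its two pieces.
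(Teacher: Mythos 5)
Your proposal is correct and follows essentially the same route as the paper: the first bound comes from testing with the solution itself, using monotonicity of $\beta_\lambda$ and the sign of the surface term, then Gronwall; the second comes from testing with $\bigl(\beta_\lambda(u_\lambda),\beta_\lambda(v_\lambda)\bigr)$, combining the two surface integrals into $\alpha\int_S(u_\lambda-v_\lambda)(\beta_\lambda(u_\lambda)-\beta_\lambda(v_\lambda))\dS\ge0$ and using $0\le j_\lambda\le j$. The only cosmetic difference is that you phrase the computation at the abstract level of \eqref{ee} via Lemma~\ref{chara} and \eqref{Green}, while the paper works directly with the pointwise system \eqref{la1}--\eqref{la3}; the content is identical.
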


\begin{proof}
Recall the equation of $u_\lambda$ corresponding to \eqref{heat1a}, \eqref{jump1a}, \eqref{bc1a},  and \eqref{ini1a}.
\begin{align}
	\partial _t u_\lambda - \Delta u_\lambda + \beta_\lambda(u_\lambda) + \pi_1(u_\lambda) =g_1 
	& \quad {\rm a.e.~in~} Q_1, \label{la1}\\
	\partial_{\boldsymbol{\nu}} u_\lambda = \alpha(v_\lambda-u_\lambda) 
	& \quad {\rm a.e.~on~} S_T, \label{la2}\\
	\partial_{\boldsymbol{n}} u_\lambda = 0 
	& \quad {\rm a.e.~on~} \Sigma_1, \label{la3}\\
	u_\lambda(0) = u_{0\alpha} 
	& \quad {\rm a.e.~in~} \Omega_1 \notag
\end{align}
from \eqref{ee}. 
The first estimate is obtained to take the solution as the test function. 
Multiplying \eqref{la1} by $u_\lambda$, integrating it over $\Omega_1$, and using 
\eqref{la2}--\eqref{la3} we get  
\begin{align*}
& \frac{1}{2} \frac{\d}{\dt} \| u_\lambda \|_{L^2(\Omega_1)}^2 
	+ 
	\int_{\Omega_1} | \nabla u_\lambda |^2 \dx  
	- \int_{S} \alpha(v_\lambda -u_\lambda) u_\lambda \dS 
	+ \bigl( \beta_\lambda ( u_\lambda), u_\lambda \bigr)_{L^2(\Omega_1)} 
	\notag \\
	 & {} = \bigl( g_1 - \pi_1(u_\lambda), u_\lambda \bigr)_{L^2(\Omega_1)}
\end{align*}
a.e.\ on $(0,T)$. 
Analogously, we obtain similar equality for $v_\lambda$, 
Then, summing up the resultant and using the monotonicity of $\beta_\lambda$ we obtain 
\begin{align*}
	& \frac{1}{2} \frac{\d}{\dt} \bigl\| u_\lambda(t) \bigr\|_{L^2(\Omega_1)}^2 
	+ 
	\frac{1}{2} \frac{\d}{\dt} \bigl\| v_\lambda(t) \bigr\|_{L^2(\Omega_2)}^2
	+ 
	\int_{\Omega_1} \bigl| \nabla u_\lambda(t) \bigr|^2 \dx  
	+
	\kappa \int_{\Omega_2} \bigl| \nabla v_\lambda(t) \bigr|^2 \dx  
	\\
	& \quad {} + \alpha \bigl\| u_\lambda(t)- v_\lambda(t) \bigr\|_{L^2(S)}^2 \\
	& {} \le \bigl( g_1(t) - \pi_1 \bigl(u_\lambda(t) \bigr), u_\lambda(t) \bigr)_{L^2(\Omega_1)}
	+  \bigl( g_2(t) - \pi_2 \bigl(v_\lambda(t) \bigr), v_\lambda(t) \bigr)_{L^2(\Omega_2)}
	\\
	& {} \le \frac{1}{2} \bigl\| g_1(t) \bigr\|_{L^2(\Omega_1)}^2 
	 + \left( \frac{1}{2} +L_1 \right) \bigl\| u_\lambda (t) \bigr\|_{L^2(\Omega_1)}^2
	 +\frac{1}{2} \bigl\| g_2(t) \bigr\|_{L^2(\Omega_2)}^2 
	 + \left( \frac{1}{2} +L_2 \right) \bigl\| v_\lambda (t) \bigr\|_{L^2(\Omega_2)}^2
\end{align*}
for a.a.\ $t \in (0,T)$, where we used the assumption {\rm (A2)}. 
Thus, applying the {G}ronwall inequality we deduce \eqref{est1} under the assumption {\rm (A3)}. 
Integrating the above $[0,T]$ with respect to time variable, and using 
\eqref{est1} we can also obtain the estimate \eqref{est1a}. 
\smallskip

Next, testing \eqref{la1} by $\beta_\lambda( u_\lambda)$, and using
\eqref{la2}--\eqref{la3}, we get 
\begin{align*}
	&
	\frac{\d}{\dt}\int_{\Omega_1} j_\lambda(u_\lambda) \dx
	+ \int_{\Omega_1} \beta_\lambda'(u_\lambda) |\nabla u_\lambda|^2 \dx 
	- \int_S \alpha (v_\lambda- u_\lambda) \beta_\lambda(u_\lambda) \dS 
	+ \bigl\| \beta_\lambda (u_\lambda) \bigr\|_{L^2(\Omega_1)}^2 \\
	& {} = \bigl( g_1 - \pi_1 (u_\lambda), \beta_\lambda(u_\lambda) \bigr)_{L^2(\Omega_1)}
\end{align*}
a.e.\ on $(0,T)$, where $j_\lambda$ is the {M}oreau--Yosida regularization of $j$ defined by 
\begin{equation}
	j_\lambda(r):=\inf_{s \in \mathbb{R}} \left\{ \frac{1}{2\lambda} |r-s|^2+j(s) \right\}
	= \frac{1}{2\lambda} \bigl| r-J_\lambda (r) \bigr|^2 + j\bigl( J_\lambda(r) \bigr)
\end{equation} 
for $r \in \mathbb{R}$. Then, we have $\beta_\lambda=j_\lambda'$ similar to $\beta =\partial j$. 
From the same way to the equation of $v_\lambda$ by $\beta_\lambda(v_\lambda)$, sum up and  
integrate $[0,\tau]$ to the resultant. Then, using the monotonicity of $\beta_\lambda$, 
in other word $\beta_\lambda' (r) \ge 0$ for all $r \in \mathbb{R}$, we deduce 
\begin{align}
	&
	\int_{\Omega_1} j_\lambda \bigl( u_\lambda(\tau) \bigr) \dx 
	 +
	\int_{\Omega_2} j_\lambda \bigl( v_\lambda(\tau) \bigr) \dx 
	+ \int_0^\tau \bigl\| \beta_\lambda (u_\lambda) \bigr\|_{L^2(\Omega_1)}^2 \dt
	+ \int_0^\tau \bigl\| \beta_\lambda (v_\lambda) \bigr\|_{L^2(\Omega_2)}^2 \dt
	\notag
	\\
	& {} \le 
	\bigl\| j(u_{0\alpha}) \bigr\| _{L^1(\Omega_1)}
	+
	\bigl\| j(v_{0\alpha}) \bigr\| _{L^1(\Omega_2)}
	+
	\|g_1 \|_{L^2(0,T;L^2(\Omega_1))}^2
	+ \|g_2 \|_{L^2(0,T;L^2(\Omega_2))}^2
	\notag \\
	& \quad {}
	+(L_1^2+L_2^2)M_1^2T
	+
	\frac{1}{2}
	 \int_0^\tau \bigl\| \beta_\lambda (u_\lambda) \bigr\|_{L^2(\Omega_1)}^2 \dt
	 +
	 \frac{1}{2}
	 \int_0^\tau \bigl\| \beta_\lambda (v_\lambda) \bigr\|_{L^2(\Omega_2)}^2 \dt
	 \label{btlam}
\end{align}
for all $\tau \in [0,T]$. 
Now, we used the 
property $0 \le j_\lambda(r) \le j(r)$ for $r \in \mathbb{R}$ 
of the {M}oreau--{Y}osida regularization $j_\lambda$ 
corresponding to the convex primitive $j$ (see, e.g., \cite{Bar10, Bre73, CF15a, CF15}). 
Thus, we conclude \eqref{est1b}. 
\end{proof}
\bigskip

In order to apply the limiting procedure not only $\lambda \to 0$ but also $\alpha \to 0$ or 
$\alpha \to + \infty$ we make sure the dependence of the bounds with respect to 
$\lambda>0$ and $\alpha \in (0,+\infty)$. 
\smallskip

\begin{lemma}\label{2nd} There exist constants $M_4, M_5>0$ independent of $\lambda>0$ and 
$\alpha \in (0,+\infty)$ such that 
\begin{align}
	& \| u_\lambda \|_{H^1(0,T;L^2(\Omega_1))} 
	+ \| v_\lambda \|_{H^1(0,T;L^2(\Omega_1))} 
	+ \| u_\lambda \|_{L^\infty(0,T;H^1(\Omega_2))}
	 \notag \\
	& \quad {}
	+ \kappa \| v_\lambda \|_{L^\infty(0,T;H^1(\Omega_2))}
	 + \sqrt{\alpha} \| u_\lambda-v_\lambda \|_{L^\infty(0,T;L^2(S))} 
	\notag \\
	& {} \le M_4 \Bigl( 1+ 
	\sqrt{\alpha} \| u_{0\alpha}-v_{0\alpha} \|_{L^2(S)} 
	+
	\| u_{0\alpha} \|_{H^1(\Omega_1)}
	+
	\kappa \| v_{0\alpha} \|_{H^1(\Omega_2)}
	\notag \\
	& \quad {}
	+
	\bigl\| j(u_{0\alpha}) \bigr\| _{L^1(\Omega_1)}^{1/2}
	+
	\bigl\| j(v_{0\alpha}) \bigr\| _{L^1(\Omega_2)}^{1/2} \Bigr),\label{est2}\\
	& \| \Delta u_\lambda \|_{L^2(0,T;L^2(\Omega_1))} 
	+ \kappa \| \Delta v_\lambda \|_{L^2(0,T;L^2(\Omega_2))}
	\notag \\
	& {} \le M_5 \Bigl( 1+ 
	\sqrt{\alpha} \| u_{0\alpha}-v_{0\alpha} \|_{L^2(S)} 
	+
	\| u_{0\alpha} \|_{H^1(\Omega_1)}
	+
	\kappa \| v_{0\alpha} \|_{H^1(\Omega_2)}
	\notag \\
	& \quad {}
	+
	\bigl\| j(u_{0\alpha}) \bigr\| _{L^1(\Omega_1)}^{1/2}
	+
	\bigl\| j(v_{0\alpha}) \bigr\| _{L^1(\Omega_2)}^{1/2} \Bigr),
	\label{est3}
\end{align}
\end{lemma}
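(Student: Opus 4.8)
The plan is to derive \eqref{est2} by the standard ``test by the time derivative'' argument and then \eqref{est3} by comparison in the equation; the computations are routine, the only delicate point being the uniformity of the constants in $\alpha$.

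\textbf{Step 1 (energy identity for $\partial_t U_\lambda$).} Since the solution of \eqref{ee} satisfies $U_\lambda \in H^1(0,T;{\mathcal H}) \cap L^\infty(0,T;D(\varphi_\alpha)) \cap L^2(0,T;D(\partial\varphi_\alpha))$, I may test \eqref{ee} by $U_\lambda'(t)$ in ${\mathcal H}$, using the chain rule $(\partial\varphi_\alpha(U_\lambda(t)),U_\lambda'(t))_{\mathcal H} = \frac{\d}{\dt}\varphi_\alpha(U_\lambda(t))$, which holds for a.a.\ $t$. Written component-wise in the spirit of the proof of Lemma~\ref{1st}, this amounts to testing \eqref{la1} by $\partial_t u_\lambda$ over $\Omega_1$ and the analogous equation for $v_\lambda$ by $\partial_t v_\lambda$ over $\Omega_2$; by the generalized Green formula \eqref{Green}, the transmission conditions \eqref{la2}--\eqref{la3} and their $v_\lambda$-counterparts make the boundary integrals on $\Gamma_1,\Gamma_2$ vanish and combine the two $S$-contributions into $\alpha\int_S(u_\lambda-v_\lambda)\partial_t(u_\lambda-v_\lambda)\dS = \frac{\alpha}{2}\frac{\d}{\dt}\|u_\lambda(t)-v_\lambda(t)\|_{L^2(S)}^2$, while $(\beta_\lambda(u_\lambda),\partial_t u_\lambda)_{L^2(\Omega_1)} = \frac{\d}{\dt}\int_{\Omega_1}j_\lambda(u_\lambda)\dx$ since $j_\lambda \in C^1$ with $j_\lambda'=\beta_\lambda$. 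The result is, for a.a.\ $t\in(0,T)$,
\begin{align*}
	& \|\partial_t u_\lambda\|_{L^2(\Omega_1)}^2 + \|\partial_t v_\lambda\|_{L^2(\Omega_2)}^2 + \frac{\d}{\dt}\varphi_\alpha\bigl(U_\lambda(t)\bigr) + \frac{\d}{\dt}\int_{\Omega_1}j_\lambda(u_\lambda)\dx + \frac{\d}{\dt}\int_{\Omega_2}j_\lambda(v_\lambda)\dx \\
	& {} = \bigl(g_1-\pi_1(u_\lambda),\partial_t u_\lambda\bigr)_{L^2(\Omega_1)} + \bigl(g_2-\pi_2(v_\lambda),\partial_t v_\lambda\bigr)_{L^2(\Omega_2)}.
\end{align*}

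\textbf{Step 2 (absorption, integration, conclusion of \eqref{est2}).} I estimate the right-hand side by Young's inequality and the Lipschitz bound $\|\pi_i(w)\|_{L^2(\Omega_i)}\le L_i\|w\|_{L^2(\Omega_i)}$ from (A2), absorb $\tfrac12\|\partial_t u_\lambda\|_{L^2(\Omega_1)}^2 + \tfrac12\|\partial_t v_\lambda\|_{L^2(\Omega_2)}^2$ into the left-hand side, integrate over $[0,\tau]$, and drop the nonnegative terms $\int_{\Omega_1}j_\lambda(u_\lambda(\tau))\dx + \int_{\Omega_2}j_\lambda(v_\lambda(\tau))\dx$. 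Bounding the initial contribution by $0\le j_\lambda\le j$, so that $\int_{\Omega_1}j_\lambda(u_{0\alpha})\dx + \int_{\Omega_2}j_\lambda(v_{0\alpha})\dx \le \|j(u_{0\alpha})\|_{L^1(\Omega_1)} + \|j(v_{0\alpha})\|_{L^1(\Omega_2)}$, recalling $\varphi_\alpha(U_\lambda(0)) = \tfrac12\|\nabla u_{0\alpha}\|_{L^2(\Omega_1)}^2 + \tfrac{\kappa}{2}\|\nabla v_{0\alpha}\|_{L^2(\Omega_2)}^2 + \tfrac{\alpha}{2}\|u_{0\alpha}-v_{0\alpha}\|_{L^2(S)}^2$ (finite by (A3)), and controlling $\int_0^T\bigl(L_1^2\|u_\lambda\|_{L^2(\Omega_1)}^2 + L_2^2\|v_\lambda\|_{L^2(\Omega_2)}^2\bigr)\dt$ by \eqref{est1}, I get a bound on $\|\partial_t u_\lambda\|_{L^2(0,T;L^2(\Omega_1))} + \|\partial_t v_\lambda\|_{L^2(0,T;L^2(\Omega_2))} + \sup_{\tau\in[0,T]}\sqrt{\varphi_\alpha(U_\lambda(\tau))}$. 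Since $\varphi_\alpha(U_\lambda(\tau))$ dominates $\tfrac12\|\nabla u_\lambda(\tau)\|_{L^2(\Omega_1)}^2 + \tfrac{\kappa}{2}\|\nabla v_\lambda(\tau)\|_{L^2(\Omega_2)}^2 + \tfrac{\alpha}{2}\|u_\lambda(\tau)-v_\lambda(\tau)\|_{L^2(S)}^2$, using $\sqrt{a+b}\le\sqrt a+\sqrt b$ and combining with the $L^\infty(0,T;L^2)$ bound of \eqref{est1} to upgrade the gradient seminorms to full $H^1(\Omega_i)$ norms yields \eqref{est2}.

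\textbf{Step 3 (comparison for \eqref{est3}).} From \eqref{la1}, $\Delta u_\lambda = \partial_t u_\lambda + \beta_\lambda(u_\lambda) + \pi_1(u_\lambda) - g_1$ a.e.\ in $Q_1$, and likewise $\kappa\Delta v_\lambda = \partial_t v_\lambda + \beta_\lambda(v_\lambda) + \pi_2(v_\lambda) - g_2$ a.e.\ in $Q_2$ (equivalently, $(-\Delta u_\lambda,-\kappa\Delta v_\lambda) = \partial\varphi_\alpha(U_\lambda) = (g_1,g_2) - U_\lambda' - B_\lambda(U_\lambda)$ by Lemma~\ref{chara}). Taking $L^2(0,T;L^2(\Omega_i))$ norms and the triangle inequality, I bound the $\partial_t$-terms by \eqref{est2}, the $\beta_\lambda$-terms by \eqref{est1b}, the $\pi_i$-terms by $L_i$ times \eqref{est1}, and the $g_i$-terms by (A3), which gives \eqref{est3}.

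\textbf{Main obstacle.} The arithmetic is routine; what genuinely needs care is (i) the sign of the interface term on $S$, which must emerge as $+\tfrac{\alpha}{2}\frac{\d}{\dt}\|u_\lambda-v_\lambda\|_{L^2(S)}^2$ --- this is exactly what makes $\sqrt{\alpha}\|u_\lambda-v_\lambda\|_{L^\infty(0,T;L^2(S))}$ controllable --- and (ii) keeping every constant independent of both $\alpha\in(0,+\infty)$ and $\lambda>0$, so that the $\alpha$-dependence stays displayed on the right-hand side through $\sqrt{\alpha}\|u_{0\alpha}-v_{0\alpha}\|_{L^2(S)}$ rather than being hidden in $M_4,M_5$; this uniformity is what the later passages $\alpha\to0$ and $\alpha\to+\infty$ rely on. Note also that, consistently with the remark following Lemma~\ref{chara}, no $H^2$-in-space estimate is claimed, and the device $j_\lambda\le j$ is precisely what allows us to avoid assuming $U_\lambda(0)\in D(\partial\varphi_\alpha)$.
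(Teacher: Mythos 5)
Your proposal is correct and follows essentially the same route as the paper: testing the equations by $\partial_t u_\lambda$ and $\partial_t v_\lambda$, identifying the interface contribution as $\tfrac{\alpha}{2}\tfrac{\d}{\dt}\|u_\lambda-v_\lambda\|_{L^2(S)}^2$ and the nonlinear term as $\tfrac{\d}{\dt}\int j_\lambda$, absorbing via Young, integrating in time with $0\le j_\lambda\le j$ and \eqref{est1}, and finally obtaining \eqref{est3} by comparison in the equations using \eqref{est1b} and \eqref{est2}. The abstract reformulation via the chain rule for $\partial\varphi_\alpha$ is only a cosmetic repackaging of the paper's componentwise computation.
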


\begin{proof}
Test \eqref{la1} by $\partial_t u_\lambda$, and use 
\eqref{la2}, \eqref{la3}. Similar to the equation 
of $v_\lambda$ by $\partial_t v_\lambda$, summing up the resultant we get  
\begin{align*}
	&
	\| \partial_t u_\lambda \|_{L^2(\Omega_1)}^2 
	+
	\| \partial_t  v_\lambda \|_{L^2(\Omega_2)}^2 
	+ \frac{1}{2} \frac{\d}{\dt} \int_{\Omega_1} | \nabla u_\lambda |^2 \dx  
	+ 
	\frac{\kappa}{2} \frac{\d}{\dt} \int_{\Omega_2} | \nabla v_\lambda |^2 \dx 
	 \\
	& \quad {} 
	+ \frac{\alpha}{2} \frac{\d}{\dt} 
	\| u_\lambda- v_\lambda\|_{L^2(S)}^2
	+ \frac{\d}{\dt}\int_{\Omega_1} b_\lambda(u_\lambda) \dx + \frac{\d}{\dt}\int_{\Omega_2} b_\lambda(v_\lambda) \dx  \\
	& = \bigl( g_1 - \pi_1 (u_\lambda), \partial_t u_\lambda \bigr)_{L^2(\Omega_1)}
	+  \bigl( g_2 - \pi_2 (v_\lambda), \partial _t v_\lambda \bigr)_{L^2(\Omega_2)}
	\\
	& \le  \| g_1 \|_{L^2(\Omega_1)}^2
	+ L_1^2 \| u_\lambda \|_{L^2(\Omega_1)}^2 
	+ \frac{1}{2} \| \partial_t u_\lambda \|_{L^2(\Omega_1)}^2 
	+ \| g_2 \|_{L^2(\Omega_2)}^2 
	+ L_2^2 \| v_\lambda \|_{L^2(\Omega_2)}^2
	+ \frac{1}{2} \| \partial_t v_\lambda \|_{L^2(\Omega_2)}^2 
\end{align*}
a.e.\ on $(0,T)$. 
Integrating this over $[0,\tau]$ with respect to time variable and using \eqref{est1}, 
we get 
\begin{align*}
	& \frac{1}{2} \int_0^\tau \bigl\| \partial_t u_\lambda (t) \|_{L^2(\Omega_1)}^2 \dt
	+ \frac{1}{2} \int_0^\tau \bigl\| \partial_t v_\lambda (t) \|_{L^2(\Omega_2)}^2 \dt
	+ \frac{1}{2} \int_{\Omega_1} \bigl| \nabla u_\lambda (\tau) \bigr|^2 \dx   
	\notag \\
	& {}
	+ 
	\frac{\kappa}{2} \int_{\Omega_2} \bigl| \nabla v_\lambda(\tau) \bigr|^2 \dx
	+ \frac{\alpha}{2} \bigl\| u_\lambda(\tau)-v_\lambda(\tau) \bigr\|_{L^2(S)}^2 
	 + \int_{\Omega_1} j_\lambda \bigl(u_\lambda(\tau) \bigr) \dx 
 + \int_{\Omega_2} j_\lambda \bigl(v_\lambda(\tau) \bigr) \dx
	\notag \\
	& \le 
	\| u_{0\alpha} \|_{H^1(\Omega_1)}^2
	+\kappa \| v_{0\alpha} \|_{H^1(\Omega_2)}^2
	+ \frac{\alpha}{2} \| u_{0\alpha}-v_{0\alpha} \|_{L^2(S)}^2 
	+ \bigl\| j (u_{0\alpha}) \bigr\|_{L^1(\Omega_1)}
	+ \bigl\| j (v_{0\alpha}) \bigr\|_{L^1(\Omega_2)}
	\notag \\
	& \quad {}
	+ \|g_1 \|_{L^2(0,T;L^2(\Omega_1))}^2
	+ \|g_2 \|_{L^2(0,T;L^2(\Omega_2))}^2
	+ L_1^2 T \|u_\lambda\|_{L^\infty(0,T;L^2(\Omega_1))}^2 +L_2^2 T \|v_\lambda\|_{L^\infty(0,T;L^2(\Omega_2))}^2, 
\end{align*}
where we used the 
property of $b_\lambda$ again. 
Thus, we conclude \eqref{est2}. 
The last estimate is obtained by the comparison in equations with \eqref{est1b} and \eqref{est2}. 
\end{proof}
\bigskip

\paragraph{\bf Proof of Proposition 2.1}
Thanks to the uniform estimates obtained in Lemmas~\ref{1st} and \ref{2nd}, we see that 
there exist a subsequence $\{ \lambda_n \}$ with $\lambda_n \to 0$, and 
targets $u_\alpha$, $\xi_\alpha$, $v_\alpha$, and $\psi_\alpha$ which depend on $\alpha \in (0,+\infty)$ such that  
\begin{align*}
	u_{\lambda_n} \to u_\alpha & \quad {\rm weakly~star~in~}H^1 \bigl( 0,T;L^2(\Omega_1) \bigr) 
	\cap L^\infty \bigl( 0,T;H^1(\Omega_1) \bigr), \\
	\Delta u_{\lambda_n} \to \Delta u_\alpha, \quad \beta_{\lambda_n} (u_{\lambda_n}) \to \xi_\alpha & 
	\quad {\rm weakly~in~}L^2 \bigl( 0,T;L^2(\Omega_1) \bigr), 
	 \\
	v_{\lambda_n} \to v_\alpha & \quad {\rm weakly~star~in~}H^1 \bigl( 0,T;L^2(\Omega_2) \bigr) 
	\cap L^\infty \bigl( 0,T;H^1(\Omega_2) \bigr), \\
	\Delta v_{\lambda_n} \to \Delta v_\alpha, \quad \beta_{\lambda_n} (v_{\lambda_n}) \to \psi_\alpha & 
	\quad {\rm weakly~in~}L^2 \bigl( 0,T;L^2(\Omega_2) \bigr)
\end{align*}
as $n \to +\infty$. Moreover, we have the compact imbedding $H^1(\Omega_i) \hookrightarrow L^2(\Omega_i)$, so applying the {A}ubin compactness theory \cite[Section 8, Corollary 4]{Sim87}, we get the strong convergence 
\begin{equation*}
	u_{\lambda_n} \to u_\alpha \quad {\rm in~}C\bigl([ 0,T ];L^2(\Omega_1) \bigr), \quad 
	v_{\lambda_n} \to v_\alpha \quad {\rm in~}C\bigl([ 0,T ];L^2(\Omega_2) \bigr),
\end{equation*}
these imply $\xi_\alpha \in \beta(u_\alpha)$ a.e.\ in $Q_1$, $\psi_\alpha \in \beta(v_\alpha)$ a.e.\ in $Q_2$ from the demi-closedness of the maximal monotone graph $\beta$, and 
\begin{equation*}
	\pi_1(u_{\lambda_n} ) \to \pi_1(u_\alpha) \quad {\rm in~}C\bigl([ 0,T ];L^2(\Omega_1) \bigr),\quad 
	\pi_2(v_{\lambda_n} ) \to \pi_2(v_\alpha) \quad {\rm in~}C\bigl([ 0,T ];L^2(\Omega_2) \bigr)
\end{equation*}
as $n \to +\infty$ from the {L}ipschitz continuities of $\pi_1$ and $\pi_2$, respectively. 
Finally, we can easily conclude that $(u_\alpha, \xi_\alpha, v_\alpha, \psi_\alpha)$ satisfies \eqref{heat1a}--\eqref{ini2a} from Lemma~\ref{chara} with \eqref{ee}. 
\hfill $\Box$

\subsection{Asymptotic analysis corresponding to closed junction states}

This subsection discusses the asymptotics as $\alpha \to 0$. 
The target problem corresponds to the situation where the permeability is equal to $0$. 
Namely, the problem is split into two problem in $\Omega_1$ and $\Omega_2$, respectively. 
We will prove it rigorously. 
\smallskip

In this subsection, we additionally prepare the following assumptions:
\begin{enumerate}
	\item[(A4)] 
	there exist $u_0 \in H^1(\Omega_1)$, $v_0 \in H^1(\Omega_2)$ such that $j(u_0) \in L^1(\Omega_1)$, $j(v_0) \in L^1(\Omega_2)$, and
	\begin{gather*}
	u_{0\alpha} \to u_0 \quad {\rm in~}H^1(\Omega_1), \quad j(u_{0\alpha}) \to j(u_0) \quad {\rm in~}L^1(\Omega_1), \\
	v_{0\alpha} \to v_0 \quad {\rm in~}H^1(\Omega_2), \quad j(v_{0\alpha}) \to j(v_0) \quad {\rm in~}L^1(\Omega_2)
	\quad {\rm as~}\alpha \to 0;
	\end{gather*}
	\item[(A5)] with respect to the order 
	\begin{equation*}
	\| u_0 -u_{0\alpha} \|_{L^2(\Omega_1)} = O(\alpha^{1/2}), \quad 
	\| v_0 -v_{0\alpha} \|_{L^2(\Omega_2)} = O(\alpha^{1/2}) \quad {\rm as~} \alpha \to 0.
	\end{equation*}
\end{enumerate}
For example, we chose $\beta(r)=r^3$ and $j(r)=(1/4)r^4$ in the case of {A}llen--{C}ahn equation. Therefore, the part of compatibility conditions $j(u_0) \in L^1(\Omega_1)$ and convergence of $\{ j(u_{0\alpha}) \}$ in the assumption {\rm (A4)} automatically hold from 
the {S}obolev imbedding theorem if $d \le 4$, because $u_0 \in H^1(\Omega) \subset L^4(\Omega_1)$ with $u_{0\alpha} \to u_0$ in $H^1(\Omega_1)$.

\begin{theorem}\label{split} Under the assumptions {\rm (A0)--(A4)}, 
let $(u_\alpha, \xi_\alpha, v_\alpha, \psi_\alpha)$ be 
the solution of \eqref{heat1a}--\eqref{ini2a} obtained in {\rm Proposition~\ref{pro}}. Then, 
there exists unique quadruplet $(u,\xi, v, \psi)$ in the following classes
\begin{gather*}
	u \in H^1\bigl(0,T; L^2(\Omega_1) \bigr) \cap L^\infty\bigl(0,T; H^1(\Omega_1) \bigr),
	\quad \Delta u, \xi \in L^2\bigl(0,T; L^2(\Omega_1) \bigr), \\
	v \in H^1\bigl(0,T; L^2(\Omega_2) \bigr) \cap L^\infty\bigl(0,T; H^1(\Omega_2) \bigr),
	\quad \Delta v, \psi \in L^2\bigl(0,T; L^2(\Omega_2) \bigr)
\end{gather*}
such that 
\begin{align*}
	u_\alpha \to u & \quad {\it weakly~star~in~}H^1 \bigl( 0,T;L^2(\Omega_1) \bigr) 
	\cap L^\infty \bigl( 0,T;H^1(\Omega_1) \bigr), \\
	u_\alpha \to u & \quad {\it in~}C\bigl([ 0,T ];L^2(\Omega_1) \bigr), 
	\quad 
	\xi_\alpha \to \xi \quad {\it weakly~in~} L^2 \bigl( 0,T ;L^2(\Omega_1) \bigr), \\
	v_\alpha \to v & \quad {\it weakly~star~in~}H^1 \bigl( 0,T;L^2(\Omega_2) \bigr) 
	\cap L^\infty \bigl( 0,T;H^1(\Omega_2) \bigr), \\
	v_\alpha \to v & \quad {\it in~}C\bigl([ 0,T ];L^2(\Omega_2) \bigr),
	\quad 
	\psi_\alpha \to \psi \quad {\it weakly~in~} L^2 \bigl( 0,T ;L^2(\Omega_2) \bigr)
\end{align*}
as $\alpha \to 0$, and $(u,\xi)$ satisfies
\begin{align}
	\partial _t u - \Delta u + \xi + \pi_1(u) =g_1, \quad \xi \in \beta(u) & 
	\quad {\it a.e.~in~} Q_1, \label{tag1}\\
	\partial_{\boldsymbol{n}} u = 0 
	& \quad {\it a.e.~on~} \Sigma_1, \label{tag2}\\
	\partial_{\boldsymbol{\nu}} u = 0 
	& \quad {\it a.e.~on~} S_T, \label{tag2S}\\
	u(0) = u_{0} 
	& \quad {\it a.e.~in~} \Omega_1.\label{tag3}
\end{align}
Therefore, if the subdomain $\Omega_1$ is enough smooth, for example $C^2$-class, then the additional regularity 
$u \in L^2(0,T;H^2(\Omega_1))$ is obtained. 
Analogously, $(v,\psi)$ satisfies  
\begin{align}
	\partial _t v - \kappa \Delta v +\psi+ \pi_2(v) =g_2, \quad \psi \in \beta(v) & 
	\quad {\it a.e.~in~} Q_2, \label{tag4}\\
	\kappa \partial_{\boldsymbol{n}} v = 0 
	& \quad {\it a.e.~on~} \Sigma_2, \label{tag5}\\
		\kappa \partial_{-\boldsymbol{\nu}} v = 0 
	& \quad {\it a.e.~on~} S_T, \label{tag5s}\\
	v(0) = v_{0} 
	& \quad {\it a.e.~in~} \Omega_2, \label{tag6}
\end{align}
and $v \in L^2(0,T;H^2(\Omega_2))$ if the subdomain $\Omega_2$ is smooth. 
Finally, under additional assumption {\rm (A5)}, 
the following rate of convergence is obtained:
\begin{equation*}
	\| u-u_\alpha \|_{C([ 0,T ];L^2(\Omega_1)) 
	\cap L^2 ( 0,T;H^1(\Omega_1)) }+
	\| v-v_\alpha \|_{C([ 0,T ];L^2(\Omega_2)) 
	\cap L^2 ( 0,T;H^1(\Omega_2)) } = O(\alpha^{1/2}) 
	\ {\it as~} \alpha \to 0.
\end{equation*}
\end{theorem}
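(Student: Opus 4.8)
The plan is to combine the uniform-in-$\alpha$ a priori estimates of Lemmas~\ref{1st}--\ref{2nd} with a weak-compactness argument to produce the limit and identify the decoupled problem, and then to quantify the convergence by a Gronwall estimate for the difference of solutions. First I would note that, by weak lower semicontinuity, the bounds of Lemmas~\ref{1st}--\ref{2nd} (which are uniform in $\lambda$ \emph{and} $\alpha$) are inherited by $(u_\alpha,\xi_\alpha,v_\alpha,\psi_\alpha)$ through the limit $\lambda\to0$ carried out in the proof of Proposition~\ref{pro}. Under {\rm(A4)} the data $u_{0\alpha},v_{0\alpha}$ are bounded in $H^1$, hence $\sqrt{\alpha}\,\|u_{0\alpha}-v_{0\alpha}\|_{L^2(S)}\to0$; together with {\rm(A4)} this makes every right-hand side of \eqref{est1}--\eqref{est3} bounded as $\alpha\to0$, and in particular $\sqrt{\alpha}\,\|u_\alpha-v_\alpha\|_{L^\infty(0,T;L^2(S))}$ stays bounded. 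Extracting a subsequence yields the weak-star/weak limits $u,\Delta u,\xi$ (resp.\ $v,\Delta v,\psi$) in the stated classes; the Aubin--Lions--Simon compactness theorem \cite{Sim87} gives the strong convergences $u_\alpha\to u$ in $C([0,T];L^2(\Omega_1))$ and $v_\alpha\to v$ in $C([0,T];L^2(\Omega_2))$; demiclosedness of $\beta$ gives $\xi\in\beta(u)$ a.e.\ in $Q_1$, $\psi\in\beta(v)$ a.e.\ in $Q_2$; and the Lipschitz continuity of $\pi_i$ gives $\pi_1(u_\alpha)\to\pi_1(u)$, $\pi_2(v_\alpha)\to\pi_2(v)$ in $C([0,T];L^2)$.

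Next I would pass to the limit in the weak formulation associated with Lemma~\ref{chara} and \eqref{ee}: for a.a.\ $t$ and all $(y,z)\in H^1(\Omega_1)\times H^1(\Omega_2)$ the pair $(u_\alpha,v_\alpha)$ satisfies a weak identity in which the only term to be handled in the limit $\alpha\to0$ is the coupling $\alpha\int_S(u_\alpha-v_\alpha)(y-z)\dS$; since $\bigl|\alpha\int_S(u_\alpha-v_\alpha)(y-z)\dS\bigr|\le\sqrt{\alpha}\,\bigl(\sqrt{\alpha}\,\|u_\alpha-v_\alpha\|_{L^2(S)}\bigr)\,\|y-z\|_{L^2(S)}$ with the parenthesized factor bounded in $L^\infty(0,T)$, this term tends to $0$. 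The limit identity is therefore \emph{decoupled}: testing with $(y,0)$, first with $y\in{\mathcal D}(\Omega_1)$, gives \eqref{tag1}, and then the generalized Green formula \eqref{Green} together with the comparison in the equation gives $\gamma_{\rm N}\nabla u=0$ in $H^{-1/2}(\partial\Omega_1)$, i.e.\ \eqref{tag2}--\eqref{tag2S} after restriction as in \eqref{howto}; \eqref{tag3} follows from $u_\alpha(0)=u_{0\alpha}\to u_0$ and $u_\alpha\to u$ in $C([0,T];L^2(\Omega_1))$. The argument for $(v,\psi)$ is identical, giving \eqref{tag4}--\eqref{tag6}. Since each decoupled problem has a unique solution (standard monotonicity--Gronwall estimate), the limit is independent of the subsequence and the whole family converges. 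If $\Omega_1$ is $C^2$, the limit satisfies the homogeneous Neumann condition on \emph{all} of $\partial\Omega_1$, so the mixed-boundary difficulty noted after Lemma~\ref{chara} no longer occurs and the elliptic estimate $\|u(t)\|_{H^2(\Omega_1)}\le C_{\rm e}\bigl(\|\Delta u(t)\|_{L^2(\Omega_1)}+\|u(t)\|_{L^2(\Omega_1)}\bigr)$ yields $u\in L^2(0,T;H^2(\Omega_1))$; likewise $v\in L^2(0,T;H^2(\Omega_2))$ when $\Omega_2$ is smooth.

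For the rate I would set $\bar u:=u-u_\alpha$, $\bar v:=v-v_\alpha$, subtract the two systems, test the $\Omega_1$-equation by $\bar u$ and the $\Omega_2$-equation by $\bar v$ (legitimate since $\partial_t\bar u,\Delta\bar u\in L^2(Q_1)$ and the normal traces $\partial_{\boldsymbol{\nu}}\bar u=\alpha(u_\alpha-v_\alpha)$, $\kappa\partial_{-\boldsymbol{\nu}}\bar v=-\alpha(u_\alpha-v_\alpha)$ lie in $L^2(S_T)$, so the generalized Green formula applies), and add. The boundary terms combine into
\begin{equation*}
\alpha\int_S(u_\alpha-v_\alpha)(\bar v-\bar u)\dS=\alpha\int_S(u_\alpha-v_\alpha)(v-u)\dS+\alpha\|u_\alpha-v_\alpha\|_{L^2(S)}^2,
\end{equation*}
the $\beta$-pairings $(\xi-\xi_\alpha,\bar u)_{L^2(\Omega_1)}$ and $(\psi-\psi_\alpha,\bar v)_{L^2(\Omega_2)}$ are nonnegative by monotonicity, and the $\pi_i$-pairings are controlled by $L_1\|\bar u\|_{L^2(\Omega_1)}^2+L_2\|\bar v\|_{L^2(\Omega_2)}^2$. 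Estimating the cross term by Young's inequality as $\frac{\alpha}{2}\|u_\alpha-v_\alpha\|_{L^2(S)}^2+\frac{\alpha}{2}\|v-u\|_{L^2(S)}^2$, the first half is absorbed by the perfect square above and the second is $\le C\alpha\bigl(\|u(t)\|_{H^1(\Omega_1)}^2+\|v(t)\|_{H^1(\Omega_2)}^2\bigr)=O(\alpha)$ uniformly in $t$ by the trace inequality together with $u\in L^\infty(0,T;H^1(\Omega_1))$, $v\in L^\infty(0,T;H^1(\Omega_2))$. Integrating over $[0,\tau]$, using $\|\bar u(0)\|_{L^2(\Omega_1)}^2+\|\bar v(0)\|_{L^2(\Omega_2)}^2=O(\alpha)$ from {\rm(A5)}, and Gronwall's lemma, I would conclude $\|\bar u\|_{C([0,T];L^2(\Omega_1))\cap L^2(0,T;H^1(\Omega_1))}^2+\|\bar v\|_{C([0,T];L^2(\Omega_2))\cap L^2(0,T;H^1(\Omega_2))}^2=O(\alpha)$, which is the asserted rate $O(\alpha^{1/2})$.

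The hard part will be the rate estimate: bounding the boundary coupling naively only gives $O(\sqrt{\alpha})$, which would be destroyed under the square root. The gain comes entirely from the sign structure after summing the two tested equations---the genuinely harmful part of the coupling is the perfect square $\alpha\|u_\alpha-v_\alpha\|_{L^2(S)}^2$, which is absorbed, while the residual cross term pairs $u_\alpha-v_\alpha$ against the \emph{fixed} limit trace $v-u$ (of size $O(1)$ in $L^\infty(0,T;L^2(S))$), so that Young's inequality leaves only an $O(\alpha)$ source. A subtler bookkeeping point, in the compactness step, is to check that the bounds of Lemmas~\ref{1st}--\ref{2nd} are genuinely uniform as $\alpha\to0$: this is precisely where {\rm(A4)} enters, through $\sqrt{\alpha}\,\|u_{0\alpha}-v_{0\alpha}\|_{L^2(S)}\to0$, and where one must verify that the coupling term vanishes in the weak limit.
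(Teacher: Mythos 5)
Your proposal is correct and follows essentially the same route as the paper: uniform bounds from Lemmas~\ref{1st}--\ref{2nd} (with (A4) and the trace inequality controlling $\sqrt{\alpha}\,\|u_{0\alpha}-v_{0\alpha}\|_{L^2(S)}$), weak-star compactness plus Aubin--Simon and demiclosedness of $\beta$ to identify the decoupled limit problems, uniqueness to upgrade to full-family convergence, and then the rate via testing the difference system by $(\bar u,\bar v)$, absorbing the perfect square $\alpha\|u_\alpha-v_\alpha\|_{L^2(S)}^2$ and treating the residual cross term $\alpha\int_S(u_\alpha-v_\alpha)(v-u)\dS$ by Young and the trace bound, exactly as in the paper's \eqref{error}. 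No gaps.
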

\smallskip

Note that the rate of convergence is limited by the worse of the two rates: $\alpha^{1/2}$ and the convergence rate of the initial data. 
In order to discuss the limiting procedure, we use the uniform estimate for 
$u_\alpha$, $\xi_\alpha$, $v_\alpha$, and $\psi_\alpha$. 
Indeed, the estimates \eqref{est1}, \eqref{est1b}, and \eqref{est3} hold in the form where the left hand side is replaced from $u_\lambda, v_\lambda$
with $u_\alpha, v_\alpha$. 
The estimate \eqref{est2}, holds where $\beta_\lambda(u_\lambda)$ and $\beta_\lambda(v_\lambda)$ are replaced with $\xi_\alpha$ and $\psi_\alpha$, respectively. 
As a remark, some of uniform estimates also work to the case where $\alpha \to +\infty$ if we 
can clarify the dependence of $\alpha$. 
\smallskip

\paragraph{\bf Proof of Theorem 2.5.} 
Here, from the trace theory we see that there exist a positive constant $C_{\rm tr}>0$ such that 
$\| y \|_{L^2(S)} \le C_{\rm tr} \| y \|_{H^1(\Omega_i)}$ for all $y \in H^1(\Omega_i)$ $(i=1,2)$. Therefore, by using {\rm (A3)} and {\rm (A4)}
there exists $\alpha^* \in (0, +\infty)$ and 
a constant $M_5'>0$ such that 
$\| j (u_{0\alpha})\|_{L^1(\Omega_1)} \le M_5'$, $\| j (v_{0\alpha})\|_{L^1(\Omega_2)} \le M_5'$, 
and 
\begin{align*}
	\|u_{0\alpha}-v_{0\alpha}\|_{L^2(S)} 
	& \le C_{\rm tr} 
	\|u_{0\alpha}-u_{0} \|_{H^1(\Omega_1)} 
	+ \|u_{0}-v_{0}\|_{L^2(S)} 
	+  C_{\rm tr} \|v_{0}-v_{0\alpha}\|_{H^1(\Omega_2)} \\
	& \le M_5'
\end{align*}
for all $\alpha \in (0, \alpha^*]$. 
Using the uniform estimates obtained in Lemmas~\ref{1st} and \ref{2nd} we see that there exist a subsequence $\{ \alpha_n \}$ with $\alpha_n \to 0$, and targets $u$, $\xi$, $v$, and $\psi$ such that  
\begin{align*}
	u_{\alpha_n} \to u & \quad {\rm weakly~star~in~}H^1 \bigl( 0,T;L^2(\Omega_1) \bigr) 
	\cap L^\infty \bigl( 0,T;H^1(\Omega_1) \bigr), 
	\\
	\Delta u_{\alpha_n} \to \Delta u, \quad \xi_{\alpha_n} \to \xi & \quad {\rm weakly~in~}L^2 \bigl( 0,T;L^2(\Omega_1) \bigr),
	\\
	v_{\alpha_n} \to v & \quad {\rm weakly~star~in~}H^1 \bigl( 0,T;L^2(\Omega_2) \bigr) 
	\cap L^\infty \bigl( 0,T;H^1(\Omega_2) \bigr), 
	\\
	\Delta v_{\alpha_n} \to \Delta v, \quad \psi_{\alpha_n} \to \psi & \quad {\rm weakly~in~}L^2 \bigl( 0,T;L^2(\Omega_2) \bigr),
	\\
	\alpha_n (u_{\alpha_n}-v_{\alpha_n}) \to 0 & \quad {\rm in~}L^\infty \bigl( 0,T;L^2(S) \bigr)
\end{align*}
as $n \to +\infty$. Moreover, by applying the {A}ubin compactness theory again
\begin{equation*}
	u_{\alpha_n} \to u \quad {\rm in~}C\bigl([ 0,T ];L^2(\Omega_1) \bigr), \quad 
	v_{\alpha_n} \to v \quad {\rm in~}C\bigl([ 0,T ];L^2(\Omega_2) \bigr),
\end{equation*}
and these imply $\xi \in \beta(u)$ a.e.\ in $Q_1$, $\psi \in \beta(v)$ a.e.\ in $Q_2$, and 
\begin{equation*}
	\pi_1(u_{\alpha_n} ) \to \pi_1(u) \quad {\rm in~}C\bigl([ 0,T ];L^2(\Omega_1) \bigr),\quad 
	\pi_2(v_{\alpha_n} ) \to \pi_2(v) \quad {\rm in~}C\bigl([ 0,T ];L^2(\Omega_2) \bigr)
\end{equation*}
as $n \to +\infty$. Therefore, we see that $(u,\xi)$ satisfies \eqref{tag1}--\eqref{tag3} and 
$(v,\psi)$ satisfies \eqref{tag4}--\eqref{tag6}, respectively. 
From the assumption of {\rm (A2)}, it is easy to prove the uniqueness of problems 
 \eqref{tag1}--\eqref{tag3} and \eqref{tag4}--\eqref{tag6}, respectively. 
Thus, these subsequence convergence hold in the sense of all sequence. 
\smallskip
 
Finally we prove the rate of convergence. Take the difference between \eqref{tag1}, \eqref{tag2}, \eqref{tag2S}, \eqref{tag3} and \eqref{heat1a}, \eqref{jump1a}, \eqref{bc1a}, \eqref{ini1a}, respectively, to deduce
\begin{align}
	\partial _t (u-u_\alpha) - \Delta (u-u_\alpha) + (\xi -\xi_\alpha)+ \pi_1(u)-\pi_1(u_\alpha)  =0 &
	\quad {\rm a.e.~in~} Q_1, \label{def1}\\
	\partial_{\boldsymbol{\nu}} (u-u_\alpha)  = -\alpha (v_\alpha-u_\alpha) &
	\quad {\rm a.e.~on~} S_T, \label{def2}\\  
	\partial_{\boldsymbol{n}} (u-u_\alpha)  =0
	& \quad {\rm a.e.~on~} \Sigma_1, \label{def3}\\
	u(0)-u_\alpha(0)  = u_{0}-u_{0\alpha}
	& \quad {\rm a.e.~in~} \Omega_1.\label{def4}
\end{align}
Multiply \eqref{def1} by $u-u_\alpha$, integrate over $(0,\tau) \times \Omega_1$. We obtain
\begin{align}
	& \frac{1}{2} \bigl\| u(\tau)-u_\alpha(\tau) \bigr\|_{L^2(\Omega_1)}^2 
	+ 
	\int_0^\tau \! \! \int_{\Omega_1} \bigl| \nabla (u-u_\alpha) \bigr|^2 \dx \dt
	+ 
	\alpha \int_0^\tau \! \! \int_S (v_\alpha-u_\alpha)(u-u_\alpha) \d S \dt 
	\notag \\
	& \le L_1 \int_0^\tau \| u-u_\alpha \|_{L^2(\Omega_1)}^2 \dt 
	+ \frac{1}{2} \| u_0 - u_{0\alpha}\|_{L^2(\Omega_1)}^2 \label{u-ua}
\end{align}
for all $\tau \in [0,T]$ 
from \eqref{def2}--\eqref{def4}, {\rm (A1)}, and {\rm (A2)}. 
Analogously, we obtain the same kind of inequality for $v-v_\alpha$.  
Therefore, sum up it and \eqref{u-ua} we get 
\begin{align}
	& \frac{1}{2} \bigl\| u(\tau)-u_\alpha(\tau) \bigr\|_{L^2(\Omega_1)}^2 
	+\frac{1}{2} \bigl\| v(\tau)-v_\alpha(\tau) \bigr\|_{L^2(\Omega_2)}^2 
	+ 
	\int_0^\tau \! \! \int_{\Omega_1} \bigl| \nabla (u-u_\alpha) \bigr|^2 \dx \dt
	\notag \\
	& {}
	+ \kappa
	\int_0^\tau \! \! \int_{\Omega_2} \bigl| \nabla (v-v_\alpha) \bigr|^2 \dx \dt
	+ 
	\alpha \int_0^\tau \| v_\alpha-u_\alpha \|_{L^2(S)}^2 \dt 
	\notag \\
	& \le L_1 \int_0^\tau \| u-u_\alpha \|_{L^2(\Omega_1)}^2 \dt 
	+
	L_2 \int_0^\tau \| v-v_\alpha \|_{L^2(\Omega_2)}^2 \dt 
	+ 
	\frac{1}{2} \| u_0 - u_{0\alpha}\|_{L^2(\Omega_1)}^2
	\notag \\
	& \quad {}
	+ 
	\frac{1}{2} \| v_0 - v_{0\alpha}\|_{L^2(\Omega_2)}^2
	+ 
	\alpha \int_0^\tau \! \! \int_S (v_\alpha-u_\alpha)(v-u) \d S \dt 
	\notag \\
	& \le L_1 \int_0^\tau \| u-u_\alpha \|_{L^2(\Omega_1)}^2 \dt 
	+
	L_2 \int_0^\tau \| v-v_\alpha \|_{L^2(\Omega_2)}^2 \dt 
	+ 
	\frac{1}{2} \| u_0 - u_{0\alpha}\|_{L^2(\Omega_1)}^2
	\notag \\
	& \quad {}
	+ 
	\frac{1}{2} \| v_0 - v_{0\alpha}\|_{L^2(\Omega_2)}^2
	+ \frac{\alpha}{2} \int_0^\tau \| v_\alpha-u_\alpha \|_{L^2(S)}^2 \dt 
	+
	\frac{\alpha}{2} \int_0^\tau \| u-v \|_{L^2(S)}^2 \dt 
	\label{error}
\end{align}
for all $\tau \in [0,T]$. We can merge the fifth term of the right hand side to the left. 
Therefore, by virtue of the {G}ronwall inequality, we deduce that there exists a constant $M_2'>0$ such that   
\begin{align*}
	& \bigl\| u(\tau)-u_\alpha(\tau) \bigr\|_{L^2(\Omega_1)}^2 
	+\bigl\| v(\tau)-v_\alpha(\tau) \bigr\|_{L^2(\Omega_2)}^2 
	\notag \\ 
	& \le \bigl( \| u_0 - u_{0\alpha}\|_{L^2(\Omega_1)}^2
	+ \| v_0 - v_{0\alpha}\|_{L^2(\Omega_2)}^2
	+\alpha \bigr) M_2'
\end{align*}
for all $\tau \in [0,T]$, and \eqref{error} means the strong convergence 
in $L^2(0,T;H^1(\Omega_i))$. Finally, under the additional assumption {\rm (A5)}, we first 
obtain the rate of convergence with respect to $C([0,T];L^2(\Omega_i))$-norm and next, 
going back to \eqref{error} we get the rate of convergence with respect to $L^2(0,T;H^1(\Omega_i))$-norm. 
\hfill $\Box$. 

\subsection{Asymptotic analysis corresponding to single states}

Next, we discuss $\alpha \to + \infty$. 
As a summary of the previous subsection, we can discuss the asymptotic analysis 
$\alpha \to 0$, using the uniform estimates obtained in Lemmas~\ref{1st}, and \ref{2nd} again. Here, Lemma~\ref{1st} can be used also for the case where $\alpha \to +\infty$. 
However, in Lemma \ref{2nd} the term 
$\sqrt{ \alpha } \| u_{0\alpha}-v_{0\alpha} \|_{L^2(S)}$ 
should be treated delicately, when $\alpha \to +\infty$. 
\smallskip

In this subsection, we prepare the following assumptions:
\begin{enumerate}
	\item[(A6)] there exist $u_0 \in H^1(\Omega_1)$, $v_0 \in H^1(\Omega_2)$ such that $j(u_0) \in L^1(\Omega_1)$, $j(v_0) \in L^1(\Omega_2)$, and
	\begin{gather*}
	u_{0\alpha} \to u_0 \quad {\rm in~}H^1(\Omega_1), \quad j(u_{0\alpha}) \to j(u_0) \quad {\rm in~}L^1(\Omega_1), \\
	v_{0\alpha} \to v_0 \quad {\rm in~}H^1(\Omega_2), \quad j(v_{0\alpha}) \to j(v_0) \quad {\rm in~}L^1(\Omega_2)
	\quad {\rm as~}\alpha \to +\infty;
	\end{gather*}
	\item[(A7)] with respect to the order 
	\begin{equation*}
	\| u_{0\alpha}-v_{0\alpha} \|_{L^2(S)} = O(\alpha^{-1/2}) \quad {\rm as~} \alpha \to +\infty. 
	\end{equation*}
\end{enumerate}
Assumption {\rm (A6)} is very similar to {\rm (A4)}, 
but the difference is that $\alpha \to 0$ is replaced by $\alpha \to +\infty$. 
Moreover, the condition  
{\rm (A7)} under {\rm (A6)} implies that $u_0=v_0$ a.e.\ on $S$. 
\smallskip

Define subspaces of $H^1(\Omega_1)$ and $H^1(\Omega_2)$ by
\begin{gather*}
	V_{1,0}:=\bigl\{ y \in H^1(\Omega_1) : y = 0 \quad {\rm a.e.~on~} S \bigr\}, \quad 
	\| y \|_{V_{1,0}}:=\| y\|_{H^1(\Omega_1)} \quad {\rm for~} y \in V_{1,0},\\
	V_{2,0}:=\bigl\{ z \in H^1(\Omega_2) : z = 0 \quad {\rm a.e.~on~} S \bigr\}, \quad 
	\| z \|_{V_{2,0}}:=\| z\|_{H^1(\Omega_2)} \quad {\rm for~} z \in V_{2,0},
\end{gather*}
and denote the dual space of $V_{i,0}$ by $V_{i,0}'$ for $i=1,2$. 
Then, $V_{i,0} \hookrightarrow L^2(\Omega_i)$ and 
$H^1(\Omega_i) \hookrightarrow L^2(\Omega_i) \subset V_{i,0}'$ hold for $i=1,2$, respectively. 
As a remark, $V_{1,0}=H^1_0(\Omega_1)$ in the case of {\sc Figure} \ref{fig:case2}. 
One of the main theorem in this paper is as follows:

\begin{theorem}\label{cellf} Under the assumptions {\rm (A0)--(A3)} and {\rm (A6)}, 
let $(u_\alpha, \xi_\alpha, v_\alpha, \psi_\alpha)$ be 
the solution of \eqref{heat1a}--\eqref{ini2a} obtained in {\rm Proposition~\ref{pro}}. Then, 
there exist a subsequence $\{ \alpha_m \}$: $\alpha_m \to +\infty$ as $m \to +\infty$ and 
a quadruplet $(u,\xi, v, \psi)$ in the following classes
\begin{gather*}
	u \in H^1 \bigl(0,T;V_{1,0}' \bigr) \cap 	
	L^\infty \bigl(0,T; L^2(\Omega_1) \bigr)
	\cap L^2\bigl(0,T; H^1(\Omega_1) \bigr), \quad \xi \in L^2\bigl(0,T; L^2(\Omega_1) \bigr),\\
	v \in H^1\bigl(0,T; V_{2,0}' \bigr) 
	\cap L^\infty \bigl(0,T; L^2(\Omega_2) \bigr)
	\cap L^2\bigl(0,T; H^1(\Omega_2) \bigr), \quad \psi \in L^2\bigl(0,T; L^2(\Omega_2) \bigr)
\end{gather*}
such that 
\begin{align*}
	u_{\alpha_m } \to u & 
	\quad {\it weakly~star~in~}H^1 \bigl( 0,T;V_{1,0}'\bigr) 
	\cap L^\infty \bigl( 0,T;L^2(\Omega_1) \bigr) 
	\cap L^2\bigl(0,T; H^1(\Omega_1) \bigr), 
	\\
	u_{\alpha_m} \to u & \quad {\it in~} 
	C\bigl([ 0,T ];V_{1,0}' \bigr) \cap L^2 \bigl(0,T;L^2(\Omega_1) \bigr), 
	\\
	\xi_{\alpha_m} \to \xi & 
	\quad {\it weakly~in~} 
	L^2 \bigl(0,T;L^2(\Omega_1) \bigr), 
	\\
	v_{\alpha_m} \to v & \quad {\it weakly~star~in~}H^1 \bigl( 0,T;V_{2,0}' \bigr) 
	\cap L^\infty \bigl( 0,T;L^2(\Omega_2) \bigr)
	\cap L^2\bigl(0,T; H^1(\Omega_2) \bigr), \\
	v_{\alpha_m} \to v & \quad {\it in~}
	C\bigl([ 0,T ];V_{2,0}' \bigr) \cap L^2 \bigl(0,T;L^2(\Omega_2) \bigr), 
	\\
	\psi_{\alpha_m} \to \psi & 
	\quad {\it weakly~in~} 
	L^2 \bigl(0,T;L^2(\Omega_2) \bigr), 
	\\
	u_{\alpha_m}-v_{\alpha_m} \to 0 
	& 
	\quad {\it in~}L^2 \bigl( 0,T;L^2(S) \bigr)
\end{align*}
as $m \to +\infty$ and 
\begin{equation}
	u=v \quad {\it a.e.~on~}S.
	\label{uv}
\end{equation}
Moreover, $(u,\xi)$ satisfies
$u(0)=u_0$ a.e.\ in $\Omega_1$, $\xi \in \beta(u)$ a.e.\ in $Q_1$, and 
\begin{gather}
	\langle \partial _t u, y \rangle _{V_{1,0}',V_{1,0}} + 
	\int_{\Omega_1} \! \nabla u \cdot \nabla y \dx + \bigl(\xi+\pi_1(u),y \bigr)_{L^2(\Omega_1)} =
	(g_1,y)_{L^2(\Omega_1)}\label{weak1}
\end{gather}
for all $y \in V_{1,0}$ a.e.\ on $(0,T)$.
Analogously, $(v,\psi)$ satisfies  
$v(0)=v_0$ a.e.\ in $\Omega_2$, $\psi \in \beta(v)$ a.e.\ in $Q_2$, and
\begin{gather}
	\langle \partial _t v, z \rangle _{V_{2,0}',V_{2,0}} + 
	\kappa
	\int_{\Omega_2}\!  \nabla v \cdot \nabla z \dx
	+ \bigl(\psi+ \pi_2(v),z \bigr)_{L^2(\Omega_2)} =
	(g_2,z)_{L^2(\Omega_2)}\label{weak2}
\end{gather}
for all $z \in V_{2,0}$ a.e.\ on $(0,T)$. 
\end{theorem}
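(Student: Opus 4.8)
The plan is to pass to the limit $\alpha \to +\infty$ in a weak formulation, using the part of Lemmas~\ref{1st}--\ref{2nd} that is independent of $\alpha$ together with a new bound on the time derivatives in the dual spaces $V_{1,0}'$ and $V_{2,0}'$. The key structural point is that without {\rm (A7)} there is no uniform $H^1(0,T;L^2)$ or $L^2(0,T;H^2)$ control, so the whole argument has to be carried out in the Gelfand triples $V_{i,0} \hookrightarrow L^2(\Omega_i) \hookrightarrow V_{i,0}'$.

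\smallskip

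\emph{Uniform estimates.} As already observed for Theorem~\ref{split}, \eqref{est1}, \eqref{est1a}, \eqref{est1b} hold with $u_\lambda, v_\lambda, \beta_\lambda(u_\lambda), \beta_\lambda(v_\lambda)$ replaced by $u_\alpha, v_\alpha, \xi_\alpha, \psi_\alpha$, with constants independent of $\alpha \in (0,+\infty)$; by {\rm (A6)} the quantities $\|u_{0\alpha}\|_{L^2(\Omega_1)}$, $\|v_{0\alpha}\|_{L^2(\Omega_2)}$, $\|j(u_{0\alpha})\|_{L^1(\Omega_1)}$, $\|j(v_{0\alpha})\|_{L^1(\Omega_2)}$ are bounded. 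Hence $u_\alpha$ is bounded in $L^\infty(0,T;L^2(\Omega_1)) \cap L^2(0,T;H^1(\Omega_1))$, $v_\alpha$ in $L^\infty(0,T;L^2(\Omega_2)) \cap L^2(0,T;H^1(\Omega_2))$, $\xi_\alpha$ in $L^2(0,T;L^2(\Omega_1))$, $\psi_\alpha$ in $L^2(0,T;L^2(\Omega_2))$, all uniformly in $\alpha$, and from \eqref{est1a}
\begin{equation*}
	\sqrt{\alpha}\,\| u_\alpha - v_\alpha \|_{L^2(0,T;L^2(S))} \le C, \qquad\text{so that}\qquad \| u_\alpha - v_\alpha \|_{L^2(0,T;L^2(S))} = O(\alpha^{-1/2}) \to 0.
\end{equation*}

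\smallskip

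\emph{Time derivatives.} I would test \eqref{heat1a} by an arbitrary $y \in V_{1,0}$, integrate over $\Omega_1$ and apply \eqref{Green}: since $y = 0$ a.e.\ on $S$ the flux term coming from \eqref{jump1a} drops out, and \eqref{bc1a} removes the $\Gamma_1$ contribution, so that
\begin{equation*}
	(\partial_t u_\alpha, y)_{L^2(\Omega_1)} + \int_{\Omega_1} \nabla u_\alpha \cdot \nabla y \dx + \bigl(\xi_\alpha + \pi_1(u_\alpha), y\bigr)_{L^2(\Omega_1)} = (g_1, y)_{L^2(\Omega_1)}
\end{equation*}
for all $y \in V_{1,0}$, a.e.\ on $(0,T)$. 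By the previous step the right-hand side, once the gradient and lower-order terms are moved over, is bounded by $C\|y\|_{H^1(\Omega_1)}$ with $C$ independent of $\alpha$; hence $\partial_t u_\alpha$ is bounded in $L^2(0,T;V_{1,0}')$, and likewise $\partial_t v_\alpha$ in $L^2(0,T;V_{2,0}')$. This replaces the $H^1(0,T;L^2)$-bound of Lemma~\ref{2nd}, whose right-hand side contains $\sqrt\alpha\,\|u_{0\alpha}-v_{0\alpha}\|_{L^2(S)}$, a quantity that need not stay bounded without {\rm (A7)}.

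\smallskip

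\emph{Compactness and passage to the limit.} I then extract $\alpha_m \to +\infty$ along which $u_{\alpha_m} \to u$ weakly-star in $H^1(0,T;V_{1,0}') \cap L^\infty(0,T;L^2(\Omega_1)) \cap L^2(0,T;H^1(\Omega_1))$, $\xi_{\alpha_m} \to \xi$ weakly in $L^2(0,T;L^2(\Omega_1))$, and similarly for $v_{\alpha_m}, \psi_{\alpha_m}$. The compact inclusion $H^1(\Omega_i) \hookrightarrow\hookrightarrow L^2(\Omega_i) \hookrightarrow V_{i,0}'$ and the Aubin--Lions--Simon theorem \cite{Sim87} give $u_{\alpha_m} \to u$ in $C([0,T];V_{1,0}') \cap L^2(0,T;L^2(\Omega_1))$ and $v_{\alpha_m} \to v$ in $C([0,T];V_{2,0}') \cap L^2(0,T;L^2(\Omega_2))$. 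Strong $L^2$-convergence, $\xi_{\alpha_m} \in \beta(u_{\alpha_m})$, weak $L^2$-convergence of $\xi_{\alpha_m}$ and the demi-closedness of $\beta$ then yield $\xi \in \beta(u)$ a.e.\ in $Q_1$ (and $\psi \in \beta(v)$ a.e.\ in $Q_2$), and Lipschitz continuity gives $\pi_1(u_{\alpha_m}) \to \pi_1(u)$ in $L^2(0,T;L^2(\Omega_1))$ and $\pi_2(v_{\alpha_m}) \to \pi_2(v)$ in $L^2(0,T;L^2(\Omega_2))$. For \eqref{uv}: the trace map $H^1(\Omega_i) \to L^2(S)$ is linear continuous, hence weakly continuous, so $u_{\alpha_m}|_S \to u|_S$ and $v_{\alpha_m}|_S \to v|_S$ weakly in $L^2(0,T;L^2(S))$; since $u_{\alpha_m}-v_{\alpha_m} \to 0$ strongly there, uniqueness of weak limits forces $u = v$ a.e.\ on $S$. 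Finally, passing to the limit in the identity of the second step tested against fixed $y \in V_{1,0}$ — the time-derivative term by weak convergence in $L^2(0,T;V_{1,0}')$, the gradient by weak convergence in $L^2(0,T;H^1(\Omega_1))$, the $\xi_{\alpha_m}$-term by weak $L^2$-convergence, the remaining terms by strong convergence — produces \eqref{weak1}, while $u_{\alpha_m}(0) = u_{0\alpha_m} \to u_0$ in $V_{1,0}'$ (by {\rm (A6)}) combined with $u_{\alpha_m} \to u$ in $C([0,T];V_{1,0}')$ gives $u(0) = u_0$; the same computation in $\Omega_2$ gives \eqref{weak2} and $v(0) = v_0$.

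\smallskip

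The main obstacle is exactly the one just described: the flux $\alpha(v_\alpha - u_\alpha)$ on $S$ is only controlled after division by $\sqrt\alpha$, so no uniform estimate survives that would let one pass to the limit in a stronger topology; the remedy — and the reason the theorem is stated with $V_{i,0}$-test functions and only up to a subsequence — is that choosing test functions that vanish on $S$ removes this flux from the weak formulation altogether, leaving an $\alpha$-uniform bound on $\partial_t u_\alpha$ in $V_{1,0}'$ as the only ingredient needed for compactness.
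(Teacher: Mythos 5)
Your proposal is correct and follows essentially the same route as the paper: the paper's Lemma~\ref{3rd} is exactly your dual-space bound on $\partial_t u_\alpha$, $\partial_t v_\alpha$ obtained by testing with functions vanishing on $S$ so that the flux term disappears, and the subsequent compactness, demi-closedness, trace, and limit-passage arguments coincide with the paper's proof.
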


These weak formulations \eqref{weak1} and \eqref{weak2} are 
quite natural, because the homogeneous {N}eumann boundary condition on $\partial \Omega$ is hidden in both of them. Moreover, one of the transmission condition on $S$ is surely obtained by \eqref{uv}. In other word, these systems are connected on $S$ by \eqref{uv}. 
If we assume the strong condition {\rm (A7)} we obtain the following result, where 
the connection between two systems \eqref{weak1} and \eqref{weak2} becomes more clear 
in the following sense (see, the condition on $S_T$): 

\begin{theorem}\label{cellf2} Under the assumptions {\rm (A0)--(A3)} and {\rm (A6)}--{\rm (A7)}, 
let $(u_\alpha, \xi_\alpha,v_\alpha,\psi_\alpha)$ be 
the solution of \eqref{heat1a}--\eqref{ini2a} obtained in {\rm Proposition~\ref{pro}}. Then, 
there exists unique quadruplet $(u,\xi, v, \psi)$ in the following classes
\begin{gather*}
	u \in H^1\bigl(0,T; L^2(\Omega_1) \bigr) \cap L^\infty\bigl(0,T; H^1(\Omega_1) \bigr),
	\quad \Delta u, \xi \in L^2\bigl(0,T; L^2(\Omega_1) \bigr), 
	\\
	v \in H^1\bigl(0,T; L^2(\Omega_2) \bigr) \cap L^\infty\bigl(0,T; H^1(\Omega_2) \bigr),
	\quad \Delta v, \psi \in L^2\bigl(0,T; L^2(\Omega_2) \bigr)
\end{gather*}
such that 
\begin{align*}
	u_\alpha \to u & \quad {\it weakly~star~in~}H^1 \bigl( 0,T;L^2(\Omega_1) \bigr) 
	\cap L^\infty \bigl( 0,T;H^1(\Omega_1) \bigr), \\
	u_\alpha \to u & \quad {\it in~}C\bigl([ 0,T ];L^2(\Omega_1) \bigr), 
	\quad
	\xi_\alpha \to \xi \quad {\it weakly~in~}L^2 \bigl( 0,T;L^2(\Omega_1) \bigr), \\
	v_\alpha \to v & \quad {\it weakly~star~in~}H^1 \bigl( 0,T;L^2(\Omega_2) \bigr) 
	\cap L^\infty \bigl( 0,T;H^1(\Omega_2) \bigr), \\
	v_\alpha \to v & \quad {\it in~}C\bigl([ 0,T ];L^2(\Omega_2) \bigr), 
	\quad 
	\psi_\alpha \to \psi \quad {\it weakly~in~}L^2 \bigl( 0,T;L^2(\Omega_2) \bigr), \\
	u_\alpha-v_\alpha \to 0 
	& 
	\quad {\it in~}L^\infty \bigl( 0,T;L^2(S) \bigr)
\end{align*}
as $\alpha \to +\infty$. Moreover, $(u,\xi)$ satisfies
\begin{align}
	\partial _t u - \Delta u + \xi+\pi_1(u) =g_1, \quad \xi \in \beta(u) & 
	\quad {\it a.e.~in~} Q_1, \label{tag10}\\
	\partial_{\boldsymbol{n}} u = 0 
	& \quad {\it a.e.~on~} \Sigma_1,  \label{tag11}\\
	u=v, \quad 
	\bigl[ \partial_{\boldsymbol{\nu}} u \bigr]:= \partial_{\boldsymbol{\nu}} u-(-\kappa  \partial_{-\boldsymbol{\nu}} v) = 0 
	& \quad {\it a.e.~on~} S_T, \label{tag12}\\
	u(0) = u_{0} 
	& \quad {\it a.e.~in~} \Omega_1.\label{tag13}
\end{align}
Analogously, $(v,\psi)$ satisfies 
\begin{align}
	\partial _t v - \kappa \Delta v + \psi+\pi_2(v) =g_2, \quad \psi\in \beta(v) & 
	\quad {\it a.e.~in~} Q_2, \label{tag14}\\
	\kappa \partial_{\boldsymbol{n}} v = 0 
	& \quad {\it a.e.~on~} \Sigma_2, \label{tag15}\\
	v=u, \quad \bigl[-\kappa \partial_{-\boldsymbol{\nu}} v \bigr] := -\kappa \partial_{-\boldsymbol{\nu}} v - \partial_{\boldsymbol{\nu}} u  = 0 
	& \quad {\it a.e.~on~} S_T, \label{tag16}\\
	v(0) = v_{0} 
	& \quad {\it a.e.~in~} \Omega_2. \label{tag17}
\end{align}
Moreover, if $\partial _{\boldsymbol{\nu}} u \in L^2(0,T;L^2(S))$, for example the solution $u$ belongs to $L^2(0,T;H^2(\Omega_1))$, then 
under the following additional assumption
\begin{equation*}
	\| u_0 -u_{0\alpha} \|_{L^2(\Omega_1)} = O(\alpha^{-1/2}), \quad 
	\| v_0 -v_{0\alpha} \|_{L^2(\Omega_2)} = O(\alpha^{-1/2}) \quad {\it as~} \alpha \to +\infty,
\end{equation*}
the following rate of convergence is obtained: 
\begin{equation}
	\| u-u_\alpha \|_{C([ 0,T ];L^2(\Omega_1)) 
	\cap L^2 ( 0,T;H^1(\Omega_1)) }+
	\| v-v_\alpha \|_{C([ 0,T ];L^2(\Omega_2)) 
	\cap L^2 ( 0,T;H^1(\Omega_2)) }
	= O(\alpha^{-1/2}) \label{errorsp}
\end{equation}
as $\alpha \to + \infty$. 
\end{theorem}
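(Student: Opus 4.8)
The plan is to mirror the proof of Theorem~\ref{cellf}, but to use the stronger assumption {\rm (A7)} in order to remove the $\alpha$-dependence from the estimates of Lemma~\ref{2nd}. Since {\rm (A7)} gives $\sqrt{\alpha}\,\| u_{0\alpha}-v_{0\alpha}\|_{L^2(S)}=O(1)$ as $\alpha\to+\infty$, assumptions {\rm (A6)}--{\rm (A7)} make the right-hand sides of the versions of \eqref{est2}, \eqref{est3} for $(u_\alpha,\xi_\alpha,v_\alpha,\psi_\alpha)$ bounded for all sufficiently large $\alpha$; together with \eqref{est1}, \eqref{est1b} this yields uniform bounds for $u_\alpha$ in $H^1(0,T;L^2(\Omega_1))\cap L^\infty(0,T;H^1(\Omega_1))$, for $\Delta u_\alpha$ and $\xi_\alpha$ in $L^2(0,T;L^2(\Omega_1))$, the analogous bounds for $v_\alpha$, $\psi_\alpha$, and crucially $\sqrt{\alpha}\,\| u_\alpha-v_\alpha\|_{L^\infty(0,T;L^2(S))}\le C$. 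Extracting a subsequence $\alpha_m\to+\infty$ produces the weak-star and weak limits, while the compact embedding $H^1(\Omega_i)\hookrightarrow L^2(\Omega_i)$ and the compactness argument of \cite{Sim87} give $u_{\alpha_m}\to u$ in $C([0,T];L^2(\Omega_1))$ and likewise for $v$. Demiclosedness of $\beta$ yields $\xi\in\beta(u)$, $\psi\in\beta(v)$; Lipschitz continuity of $\pi_i$ passes to the limit in $\pi_i$; and the strong convergence together with {\rm (A6)} gives \eqref{tag13}, \eqref{tag17}. Passing to the limit in the equations and in the homogeneous Neumann conditions on $\Gamma_i$ gives \eqref{tag10}, \eqref{tag11}, \eqref{tag14}, \eqref{tag15}. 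From the bound on $\sqrt{\alpha}\,\| u_\alpha-v_\alpha\|_{L^\infty(0,T;L^2(S))}$ we get $u_{\alpha_m}-v_{\alpha_m}\to0$ in $L^\infty(0,T;L^2(S))$, and since the trace is weakly continuous on $L^2(0,T;H^1(\Omega_i))$ this forces $u=v$ a.e.\ on $S_T$.

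The main obstacle is the flux-jump condition $[\partial_{\boldsymbol{\nu}} u]=0$ in \eqref{tag12}, \eqref{tag16}. Because $\partial_{\boldsymbol{\nu}} u_\alpha=\alpha(v_\alpha-u_\alpha)$ is only $O(\sqrt{\alpha})$ in $L^2(S_T)$, one cannot pass to the limit in \eqref{jump1a} or \eqref{jump2a} separately. Instead I will test the weak formulation of the $\alpha$-problem by pairs $(y,z)\in H^1(\Omega_1)\times H^1(\Omega_2)$ whose traces coincide on $S$: via the generalized Green formula \eqref{Green} on each of $\Omega_1$ and $\Omega_2$ together with \eqref{jump1a}--\eqref{bc2a}, the coupling term $\alpha\int_S(u_\alpha-v_\alpha)(y-z)\dS$ is identically zero, so no control on $\alpha\| u_\alpha-v_\alpha\|_{L^2(S)}$ is needed. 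Letting $m\to+\infty$ gives a limiting variational identity valid for all such $(y,z)$; inserting \eqref{tag10}, \eqref{tag14} and the already-proven Neumann conditions and using \eqref{Green} once more, the remaining boundary contribution reduces to $\langle(\gamma_{\rm N}\nabla u)_{|_S}+\kappa(\gamma_{\rm N}\nabla v)_{|_S},w\rangle_{H^{-1/2}(S),\tilde{H}^{1/2}(S)}=0$ for every $w\in\tilde{H}^{1/2}(S)$, which can indeed be realized as the common $S$-trace of functions in $H^1(\Omega_1)$ and $H^1(\Omega_2)$; hence $\partial_{\boldsymbol{\nu}} u=-\kappa\partial_{-\boldsymbol{\nu}} v$ on $S_T$, i.e.\ \eqref{tag12} and \eqref{tag16}. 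This is precisely where the Lions--Magenes--Strichartz trace spaces and the restriction device \eqref{howto} of Section~2.1 are genuinely needed.

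For uniqueness of $(u,\xi,v,\psi)$ I will take two solutions, subtract, test the $\Omega_1$-difference by $u^{(1)}-u^{(2)}$ and the $\Omega_2$-difference by $v^{(1)}-v^{(2)}$, and add; the boundary contribution on $S$ is $-\int_S\partial_{\boldsymbol{\nu}}(u^{(1)}-u^{(2)})(u^{(1)}-u^{(2)})\dS-\kappa\int_S\partial_{-\boldsymbol{\nu}}(v^{(1)}-v^{(2)})(v^{(1)}-v^{(2)})\dS$, which vanishes because $u^{(1)}-u^{(2)}=v^{(1)}-v^{(2)}$ and $\partial_{\boldsymbol{\nu}}(u^{(1)}-u^{(2)})=-\kappa\partial_{-\boldsymbol{\nu}}(v^{(1)}-v^{(2)})$ on $S$. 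Monotonicity of $\beta$, Lipschitz continuity of $\pi_i$ (assumption {\rm (A2)}) and the Gronwall inequality then give $u^{(1)}=u^{(2)}$, $v^{(1)}=v^{(2)}$, and by comparison in the equations $\xi^{(1)}=\xi^{(2)}$, $\psi^{(1)}=\psi^{(2)}$. Uniqueness promotes the subsequential convergence to convergence of the whole family as $\alpha\to+\infty$.

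Finally, for the rate \eqref{errorsp}, under the extra hypothesis $\partial_{\boldsymbol{\nu}} u\in L^2(0,T;L^2(S))$ (which holds, e.g., when $u\in L^2(0,T;H^2(\Omega_1))$, and then automatically $\kappa\partial_{-\boldsymbol{\nu}} v=-\partial_{\boldsymbol{\nu}} u\in L^2(S_T)$) I will subtract \eqref{heat1a}, \eqref{bc1a}, \eqref{jump1a} from \eqref{tag10}, \eqref{tag11}, \eqref{tag12}, test by $u-u_\alpha$, do the same for $v-v_\alpha$, and add over $\Omega_1$, $\Omega_2$. Using $u=v$ on $S_T$, the $S$-boundary terms collapse to $\int_0^\tau\!\!\int_S\partial_{\boldsymbol{\nu}} u\,(u_\alpha-v_\alpha)\dS\dt+\alpha\int_0^\tau\| u_\alpha-v_\alpha\|_{L^2(S)}^2\dt$, and the first is absorbed by Young's inequality into $\tfrac{\alpha}{2}\int_0^\tau\| u_\alpha-v_\alpha\|_{L^2(S)}^2\dt+\tfrac{1}{2\alpha}\int_0^\tau\|\partial_{\boldsymbol{\nu}} u\|_{L^2(S)}^2\dt$. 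Since $\tfrac{1}{2\alpha}\int_0^T\|\partial_{\boldsymbol{\nu}} u\|_{L^2(S)}^2\dt=O(\alpha^{-1})$ and $\| u_0-u_{0\alpha}\|_{L^2(\Omega_1)}^2+\| v_0-v_{0\alpha}\|_{L^2(\Omega_2)}^2=O(\alpha^{-1})$ by assumption, dropping the monotone terms, controlling the $\pi_i$-terms and applying Gronwall gives the $C([0,T];L^2(\Omega_i))$ rate; returning to the time-integrated inequality then gives the $L^2(0,T;H^1(\Omega_i))$ rate, both $O(\alpha^{-1/2})$. As in Theorem~\ref{split}, the exponent $1/2$ is the best one can expect here, being dictated jointly by this boundary term and by the convergence rate of the initial data.
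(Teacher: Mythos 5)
Your proposal is correct and follows essentially the same route as the paper's proof: uniform estimates from Lemmas~\ref{1st}--\ref{2nd} made $\alpha$-independent via {\rm (A6)}--{\rm (A7)}, compactness and demiclosedness for the limit passage, recovery of the flux condition \eqref{tag12} by testing with pairs whose traces match on $S$ (so the coupling term cancels, exactly as in the paper's merging of \eqref{merge1}--\eqref{merge2} via $\mathcal{R}_i$ applied to zero-extensions of $w\in\tilde{H}^{1/2}(S)$), uniqueness through the merged weak formulation, and the rate via Young's inequality on the boundary term $\int_S\partial_{\boldsymbol{\nu}}u\,(v_\alpha-u_\alpha)\dS$. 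The only cosmetic difference is that the paper obtains the merged weak formulation for the uniqueness step by passing to the limit in \eqref{weak1a}--\eqref{weak2a} rather than integrating by parts directly on the limit solutions, which sidesteps the issue that the individual fluxes on $S$ are a priori only in $H^{-1/2}(S)$.
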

\smallskip

As a remark, in the statement of 
Theorem~\ref{cellf2}, one of the transmission condition \eqref{tag12} can be obtained from 
\begin{equation}
	(\gamma _{\rm N} \nabla u)_{|_{S}}= - \kappa (\gamma _{\rm N} \nabla v)_{|_{S}-} \quad {\rm in~}H^{-1/2}(S) 
	\quad {\rm a.e.~on~} (0,T), \label{2ndtrans}
\end{equation}
namely the restriction $(\gamma _{\rm N} \nabla u)_{|_{S}} \in H^{-1/2}(S)$
of the trace $\gamma _{\rm N} \nabla u \in H^{-1/2}(\partial \Omega_1)$ 
is equal to the constant $-\kappa$ times 
the restriction $(\gamma _{\rm N} \nabla v)_{|_{S}-} \in H^{-1/2}(S)$
of $\gamma _{\rm N} \nabla v \in H^{-1/2}(\partial \Omega_2)$. 
Both of them are interpreted in $H^{-1/2}(S)$ and the jump $[\partial_{\boldsymbol{\nu}} u]$ is equal to $0$ it can be interpreted in $L^2(S)$. 
Therefore, it is written as $\partial_{\boldsymbol{\nu}} u -(-\kappa  \partial_{-\boldsymbol{\nu}} v)$ to emphasize that they cannot be separated into $\partial_{\boldsymbol{\nu}} u$ and 
$-\kappa  \partial_{-\boldsymbol{\nu}} v$ in $L^2$ sense. 
The condition \eqref{tag16} overlaps with \eqref{tag12}. 
However, it is intentionally added for the equation to $(v,\psi)$. 
\smallskip

For simplicity let $\kappa=1$, $\beta$ is singleton, and $\pi_1=\pi_2=:\pi$. Then, the corresponding solution $(u, v)$ can be interpreted as a solution of the following merged {A}llen--{C}ahn equation. Indeed, from the transmission condition \eqref{tag12} or \eqref{tag16}, we see that $\tilde{u}(t)+\tilde{v}(t) \in H^1(\Omega)$ for a.a.\ $t \in (0,T)$, that is,  
$U=\tilde{u} + \tilde{v} \in H^1(0,T;L^2(\Omega)) \cap L^\infty(0,T;H^1(\Omega))$, 
$\Delta U \in L^2(0,T;L^2(\Omega))$ satisfying 
\begin{align*}
	\partial _t U - \Delta U + B(U) =G:=\tilde{g}_1+\tilde{g}_2 & 
	\quad {\rm a.e.~in~} Q, \\
	\partial_{\boldsymbol{n}} U = 0 
	& \quad {\rm a.e.~on~} (0,T) \times \partial \Omega, \\
	U(0) = U_0:=\tilde{u}_{0}+\tilde{v}_0 
	& \quad {\rm a.e.~in~} \Omega,
\end{align*}
where $B(U)=\beta(U)+\pi(U)$. 
Therefore, if the domain $\Omega$ is enough smooth, 
for example $C^2$-class (see, the second figure of {\sc Figure} \ref{fig:case1}), then from the 
standard elliptic estimate 
we can gain $U \in L^2(0,T;H^2(\Omega))$. It corresponds to the single state any more. 
\smallskip

\begin{lemma} \label{3rd}
There exists a constant $M_6>0$ independent of $\alpha \in (0,+\infty)$ such that 
\begin{align}
	& \| \partial_t u_\alpha \|_{L^2(0,T;V_{1,0}')} 
	+ \| \partial _t v_\alpha \|_{L^2(0,T;V_{2,0}')} 
	\notag \\
	& \le 
	M_6 \Bigl( 1+\| u_{0\alpha}\|_{L^2(\Omega_1)}
	+  \| v_{0\alpha}\|_{L^2(\Omega_2)}
	+ \bigl\| j(u_{0\alpha}) \bigr\| _{L^1(\Omega_1)}^{1/2}
	+ 
	\bigl\| j(v_{0\alpha}) \bigr\| _{L^1(\Omega_2)}^{1/2}
	\Bigr).
	\label{time}
\end{align}
\end{lemma}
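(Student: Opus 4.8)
The plan is to test equation \eqref{heat1a} (respectively \eqref{heat2a}) against functions that vanish on $S$, so that the parameter-dependent transmission term on $S$ disappears and the resulting estimate no longer sees $\alpha$. This is the whole point of the lemma: testing against a general element of $H^1(\Omega_1)$ would reintroduce the Robin contribution together with a factor $\sqrt{\alpha}$, giving back only \eqref{est2}, whose right-hand side contains $\sqrt{\alpha}\,\|u_{0\alpha}-v_{0\alpha}\|_{L^2(S)}$ and may blow up as $\alpha\to+\infty$.

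First I would record the weak identity satisfied by $\partial_t u_\alpha$. Multiplying \eqref{heat1a} by an arbitrary $y\in V_{1,0}$, integrating over $\Omega_1$ and applying the generalized Green formula \eqref{Green}, the boundary term over $\partial\Omega_1=\Gamma_1\cup\partial S\cup S$ reduces, via \eqref{bc1a} and \eqref{jump1a} and $\partial_{\boldsymbol{n}}u_\alpha=0$ on $\Gamma_1$, to $\int_S\alpha(v_\alpha-u_\alpha)\,y\dS$, which vanishes since $\gamma_0 y=0$ a.e.\ on $S$ (and $\partial S$ carries zero surface measure). Because $\partial_t u_\alpha\in L^2(0,T;L^2(\Omega_1))\hookrightarrow L^2(0,T;V_{1,0}')$ by Proposition~\ref{pro}, this gives
\[
\langle \partial_t u_\alpha, y\rangle_{V_{1,0}',V_{1,0}}
= -\int_{\Omega_1}\nabla u_\alpha\cdot\nabla y\dx
-\bigl(\xi_\alpha+\pi_1(u_\alpha),y\bigr)_{L^2(\Omega_1)}
+(g_1,y)_{L^2(\Omega_1)}
\]
for all $y\in V_{1,0}$ and a.a.\ $t\in(0,T)$, which is precisely the weak formulation \eqref{weak1}.

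Next I would estimate the dual norm pointwise in time. By the Cauchy--Schwarz inequality, $\|y\|_{V_{1,0}}=\|y\|_{H^1(\Omega_1)}$, and $|\pi_1(u_\alpha)|\le L_1|u_\alpha|$ from {\rm (A2)}, one obtains, for a.a.\ $t\in(0,T)$,
\[
\bigl\|\partial_t u_\alpha\bigr\|_{V_{1,0}'}
\le \|u_\alpha\|_{H^1(\Omega_1)}+\|\xi_\alpha\|_{L^2(\Omega_1)}
+L_1\|u_\alpha\|_{L^2(\Omega_1)}+\|g_1\|_{L^2(\Omega_1)}.
\]
Squaring and integrating over $(0,T)$ bounds $\|\partial_t u_\alpha\|_{L^2(0,T;V_{1,0}')}$ by the $L^2(0,T;H^1(\Omega_1))$-norm of $u_\alpha$, the $L^2(0,T;L^2(\Omega_1))$-norms of $\xi_\alpha$ and $u_\alpha$, and the fixed datum $\|g_1\|_{L^2(0,T;L^2(\Omega_1))}$. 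I would then invoke Lemma~\ref{1st}: after the passage $\lambda_n\to0$ carried out in the proof of Proposition~\ref{pro}, weak(-star) lower semicontinuity of the norms transfers \eqref{est1}, \eqref{est1a} and \eqref{est1b} to $u_\alpha$ and $\xi_\alpha$ (in place of $u_\lambda$ and $\beta_\lambda(u_\lambda)$) with constants independent of $\alpha\in(0,+\infty)$; absorbing $\|g_1\|_{L^2(0,T;L^2(\Omega_1))}$ into the constant then controls $\|\partial_t u_\alpha\|_{L^2(0,T;V_{1,0}')}$ by the right-hand side of \eqref{time}. Running the same argument verbatim with $v_\alpha$, $\psi_\alpha$, $V_{2,0}$ and \eqref{heat2a}, \eqref{jump2a}, \eqref{bc2a} (here $\kappa>0$ is fixed), and adding the two estimates, yields \eqref{time} for a suitable $M_6>0$ independent of $\alpha$.

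I do not anticipate a genuine obstacle: the only real content is the observation that $V_{i,0}$ annihilates the Robin term, which is exactly what decouples the estimate from $\alpha$. The single point needing a line of care is the justification that the surface integral over $S$ truly vanishes --- i.e.\ that $\partial_{\boldsymbol{\nu}}u_\alpha=\alpha(v_\alpha-u_\alpha)\in L^2(S)$, so that the boundary pairing is an honest $L^2(S)$ inner product against $\gamma_0 y$ and hence zero for $y\in V_{1,0}$; this is provided by Lemma~\ref{chara} together with the regularity $\Delta u_\alpha\in L^2(0,T;L^2(\Omega_1))$ and $u_\alpha\in L^2(0,T;H^1(\Omega_1))$. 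As a remark, \eqref{time} is precisely the bound needed to run the Aubin--Lions compactness argument that yields the $C([0,T];V_{i,0}')$ convergences in Theorem~\ref{cellf}.
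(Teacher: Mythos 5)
Your proposal is correct and follows essentially the same route as the paper: test \eqref{heat1a} against $y\in V_{1,0}$ so that the $\alpha$-dependent term on $S$ drops out, bound the dual norm $\|\partial_t u_\alpha(t)\|_{V_{1,0}'}$ by $\|u_\alpha(t)\|_{H^1(\Omega_1)}$, $\|\xi_\alpha(t)\|_{L^2(\Omega_1)}$, and $\|g_1(t)\|_{L^2(\Omega_1)}$, and then invoke the $\alpha$-independent estimates \eqref{est1}--\eqref{est1b}. Your additional remarks on why the surface integral genuinely vanishes and on transferring Lemma~\ref{1st} to $u_\alpha,\xi_\alpha$ by weak lower semicontinuity are fine and consistent with the paper's (more terse) argument.
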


\begin{proof} 
Let $y \in V_{1,0}$, multiplying \eqref{heat1a} by $y$, 
integrating it over $\Omega_1$, 
using the integration by part with \eqref{bc1a}, then we get  
\begin{equation*}
	\bigl\langle \partial _t u_\alpha(t), y \bigr\rangle _{V_{1,0}', V_{1,0}}
	+ \int_{\Omega_1} \! \nabla u_\alpha \cdot \nabla y \dx
	= \bigl( g_1(t) - \xi_\alpha(t) - \pi_1\bigl( u_\alpha (t)\bigr), y \bigr)_{L^2(\Omega_1)}
\end{equation*}
for a.a.\ $t \in (0,T)$. 
Therefore,  
\begin{align*}
	\bigl\| \partial _t u_\alpha(t) \bigr\|_{V_{1,0}'} 
	& = \sup_{\scriptsize
	\substack{y \in V_{1,0} \\ \| y \|_{V_{1,0} } = 1} }
	\Bigl| \bigl\langle \partial _t u_\alpha(t), y \bigr\rangle_{V_{1,0}',V_{1,0}}
	\Bigr| \\
	& \le (1+L_1) \bigl\| u_\alpha(t) \bigr\|_{H^1(\Omega_1)} 
	+ 
	\bigl\| 
	g_1(t) 
	\bigr\|_{L^2(\Omega_1)} 
	+
	\bigl\| 
	\xi_\alpha(t) 
	\bigr\|_{L^2(\Omega_1)} 
\end{align*}
for a.a.\ $t \in (0,T)$. Thanks to \eqref{est1}--\eqref{est1b}, this means that there exists  
a constants $M_6>0$ independent of $\alpha \in (0,+\infty)$ such 
\begin{align*}
	\| \partial_t u_\alpha \|_{L^2(0,T;V_{1,0}')} 
	& \le M_6 \Bigl( 1+\| u_{0\alpha}\|_{L^2(\Omega_1)}
	+  \| v_{0\alpha}\|_{L^2(\Omega_2)}
	+ \bigl\| j(u_{0\alpha}) \bigr\| _{L^1(\Omega_1)}^{1/2}
	+ 
	\bigl\| j(v_{0\alpha}) \bigr\| _{L^1(\Omega_2)}^{1/2}
	\Bigr).
\end{align*}
It is true also for $\| \partial_t v_\alpha \|_{L^2(0,T;V_{2,0}')}$. 
Thus, we complete the proof of \eqref{time}. 
\end{proof}
\smallskip

\paragraph{\bf Proof of Theorem 2.6.} Lemmas~\ref{1st}, \ref{3rd}, and {\rm (A6)} imply that there exist a subsequence $\{ \alpha_m \}$ with $\alpha_m \to +\infty$, and targets $u$, $\xi$, $v$, and $\psi$ such that  
\begin{align}
	u_{\alpha_m} \to u & \quad {\rm weakly~star~in~}H^1 ( 0,T;V_{1,0}') 
	\cap L^\infty \bigl( 0,T;L^2(\Omega_1) \bigr)
	\cap L^2 \bigl( 0,T;H^1(\Omega_1) \bigr), 
	\notag \\
	\xi_{\alpha_m} \to \xi & \quad {\rm weakly~in~} L^2 \bigl( 0,T;L^2(\Omega_1) \bigr),
	\notag \\
	v_{\alpha_m} \to v & \quad {\rm weakly~star~in~}H^1 ( 0,T;V_{2,0}') 
	\cap L^\infty \bigl( 0,T;L^2(\Omega_2) \bigr)
	\cap L^2 \bigl( 0,T;H^1(\Omega_2) \bigr),
	\notag \\
	\psi_{\alpha_m} \to \psi & \quad {\rm weakly~in~} L^2 \bigl( 0,T;L^2(\Omega_2) \bigr),
	\notag \\
	u_{\alpha_m} - v_{\alpha_m} \to 0 & \quad {\rm in~}L^2\bigl( 0,T;L^2(S) \bigr) \label{strong}
\end{align}
as $m \to +\infty$. Moreover, thanks to $H^1(\Omega_i) \hookrightarrow L^2(\Omega_i) \subset V_{i,0}'$ hold for $i=1,2$, 
applying the compactness theory, we get
\begin{align*}
	u_{\alpha_m} \to u & \quad {\rm in~}C\bigl([ 0,T ];V_{1,0}' \bigr) \cap L^2 \bigl(0,T;L^2(\Omega_1) \bigr), \\
	v_{\alpha_m} \to v & \quad {\rm in~}C\bigl([ 0,T ];V_{2,0}' \bigr) \cap L^2 \bigl(0,T;L^2(\Omega_2) \bigr),
\end{align*}
these imply $\xi \in \beta (u)$ a.e.\ in $Q_1$, $\psi \in \beta (v)$ a.e.\ in $Q_2$, and 
\begin{equation*}
	\pi_1(u_{\alpha_m} ) \to \pi_1(u) \quad {\rm in~} L^2 \bigl(0,T;L^2(\Omega_1) \bigr), \quad 
	\pi_2(v_{\alpha_m} ) \to \pi_2(v) \quad {\rm in~} L^2 \bigl(0,T;L^2(\Omega_2) \bigr)
\end{equation*}
as $m \to +\infty$. Therefore, from {\rm (A6)} we see that for 
$u \in C([0,T];V_{1,0}') \cap L^\infty (0,T;L^2(\Omega_1))$, the function 
$u(0)$ makes sense and 
$u(0)=u_0$ a.e.\ in $\Omega_1$. 
Analogously, 
$v(0)=v_0$ a.e.\ in $\Omega_2$ holds. 
Moreover, from the weakly continuity of the trace operator from $H^1(\Omega_i)$ to $H^{1/2}(\partial \Omega_i) \subset L^2(\partial \Omega_i)$ with \eqref{strong}, 
one of the transmission condition \eqref{uv} holds from \eqref{strong}. 
Now, we have 
\begin{gather*}
	\langle \partial _t u_{\alpha_m}, y \rangle _{V_{1,0}',V_{1,0}} + 
	\int_{\Omega_1} \! \nabla u_{\alpha_m} \cdot \nabla y \dx + \bigl(\xi_{\alpha_m}+ \pi_1(u_{\alpha_m}),y \bigr)_{L^2(\Omega_1)} =
	(g_1,y)_{L^2(\Omega_1)}
\end{gather*}
for all $y \in V_{1,0}$ a.e.\ on $(0,T)$. Thus, taking $m \to +\infty$ in the above we deduce 
\eqref{weak1}. Analogously, we get \eqref{weak2}. \hfill $\Box$ \\
\smallskip

\paragraph{\bf Proof of Theorem 2.7.} The assumption {\rm (A6)} and the additional assumption {\rm (A7)} implies that 
\begin{equation*}
	\sqrt{\alpha} \| u_{0\alpha} - v_{0\alpha} \|_{L^2(S)}
	+
	\| u_{0\alpha} \|_{H^1(\Omega_1)} 
	+
	\kappa \| v_{0\alpha} \|_{H^1(\Omega_2)}
	+
	\bigl\| j(u_{0\alpha}) \bigr\| _{L^1(\Omega_1)}^{1/2}
	+
	\bigl\| j(v_{0\alpha}) \bigr\| _{L^1(\Omega_2)}^{1/2}
\end{equation*}
is uniformly bounded with respect to $\alpha \in (0,+\infty)$. 
Therefore, 
using Lemmas~\ref{1st} and \ref{2nd} we see that there exists a constant $M_7>0$ independent of $\alpha \in (0, +\infty)$ such that
\begin{gather*}
	\| \xi_\alpha \|_{L^2(0,T;L^2(\Omega_1))} 
	+ \| \psi_\alpha \|_{L^2(0,T;L^2(\Omega_2))}
	\le M_7, \notag \\
	\| u_\alpha \|_{H^1(0,T;L^2(\Omega_1))} 
	+ \| v_\alpha \|_{H^1(0,T;L^2(\Omega_1))} 
	+ \| u_\alpha \|_{L^\infty(0,T;H^1(\Omega_2))}
	+ \kappa \| v_\alpha \|_{L^\infty(0,T;H^1(\Omega_2))} \notag \\
	{} + \sqrt{\alpha} \| u_\alpha-v_\alpha \|_{L^\infty(0,T;L^2(S))} \le 
	M_7, \notag \\
	\| \Delta u_\alpha \|_{L^2(0,T;L^2(\Omega_1))} 
	+ \kappa \| \Delta v_\alpha \|_{L^2(0,T;L^2(\Omega_2))}
	\le M_7. \notag
\end{gather*}
Therefore, the same discussion in the proof of Theorem~\ref{split} works, that is, 
there exist a subsequence $\{ \alpha_m \}$ with $\alpha_m \to +\infty$, and targets 
$u$, $\xi$, $v$, and $\psi$ such that  
\begin{align*}
	u_{\alpha_m} \to u & \quad {\rm weakly~star~in~}H^1 \bigl( 0,T;L^2(\Omega_1) \bigr) 
	\cap L^\infty \bigl( 0,T;H^1(\Omega_1) \bigr), 
	\\
	\Delta u_{\alpha_m} \to \Delta u, \quad \xi_{\alpha_m} \to \xi  & \quad {\rm weakly~in~}L^2 \bigl( 0,T;L^2(\Omega_1) \bigr),
	\\
	v_{\alpha_m} \to v & \quad {\rm weakly~star~in~}H^1 \bigl( 0,T;L^2(\Omega_2) \bigr) 
	\cap L^\infty \bigl( 0,T;H^1(\Omega_2) \bigr), 
	\\
	\Delta v_{\alpha_m} \to \Delta v, \quad \psi_{\alpha_m} \to \psi & \quad {\rm weakly~in~}L^2 \bigl( 0,T;L^2(\Omega_2) \bigr),
	\\
	u_{\alpha_m} \to u & \quad {\rm in~}C\bigl([ 0,T ];L^2(\Omega_1) \bigr), \quad 
	v_{\alpha_m} \to v \quad {\rm in~}C\bigl([ 0,T ];L^2(\Omega_2) \bigr), 
	\\
	\pi_1(u_{\alpha_m} ) \to \pi_1(u)& \quad {\rm in~}C\bigl([ 0,T ];L^2(\Omega_1) \bigr), \quad 
	\pi_2(v_{\alpha_m} ) \to \pi_2(v) \quad {\rm in~}C\bigl([ 0,T ];L^2(\Omega_2) \bigr), 
	\\
	u_{\alpha_m}-v_{\alpha_m} \to 0 & \quad {\rm in~}L^\infty \bigl( 0,T;L^2(S) \bigr)
\end{align*}
as $m \to +\infty$. Moreover, we obtain $\xi \in \beta(u)$ a.e.\ in $Q_1$ and 
$\psi \in \beta (v)$ a.e.\ in $Q_2$, from the strong and weak convergence.  
Therefore, $u$ and $v$ satisfy initial conditions 
\eqref{tag13}, \eqref{tag17}, and one of the transmission condition \eqref{tag12}, that is, $u=v$ a.e.\ on $S$. 
We recall the equation \eqref{heat1a} for $u_{\alpha_m}$ and $\xi_{\alpha_m}$, taking 
the limit $m \to +\infty$ we recover the 
equation \eqref{tag10}. 
Analogously, we recover the equation \eqref{tag14} from \eqref{heat2a}. 
In order to recover the boundary conditions, 
we recall the following weak formulation:
\begin{equation}
	\bigl( \partial_t u_{\alpha_m} + \xi_{\alpha_m}+ \pi_1(u_{\alpha_m
}) -g_1, y \bigr)_{L^2(\Omega_1)}
	+ \int_{\Omega_1} \! \nabla u_{\alpha_m} \cdot \nabla y \dx  -
	\int_S \partial_{\boldsymbol{\nu}} u_{\alpha_m} y \dS =0
	\label{weak1a} 
\end{equation}
for all $y \in H^1(\Omega_1)$ and 
\begin{equation}
	\bigl( \partial_t v_{\alpha_m} +\psi_{\alpha_m}+ \pi_2(v_{\alpha_m}) -g_2,z \bigr) _{L^2(\Omega_2)} 
	+ \kappa \int_{\Omega_2} \!  \nabla v_{\alpha_m} \cdot \nabla z \dx -
	\kappa \int_S \partial_{-\boldsymbol{\nu}} v_{\alpha_m} z \dS =0
	\label{weak2a}
\end{equation}
for all $z \in H^1(\Omega_2)$, a.e.\ on $(0,T)$. 
Taking $w \in \tilde{H}^{1/2}(\Gamma_1)$, using the natural $0$-extension $\tilde{w}$ to $\partial \Omega_1$, 
we can take ${\mathcal R}_1 \tilde{w} \in H^{1}(\Omega_1)$ as the test function of the above \eqref{weak1a}, then 
\begin{equation*}
	\int_{\Omega_1} \bigl( \partial_t u_{\alpha_m} +\xi_{\alpha_m} + \pi_1(u_{\alpha_m})-g_1 \bigr){\mathcal R}_1 \tilde{w}  \dx 
	+ \int_{\Omega_1} \!  \nabla u_{\alpha_m} \cdot \nabla {\mathcal R}_1 \tilde{w} \dx=0,
\end{equation*}
where ${\mathcal R}_1: H^{1/2}(\partial \Omega_1) \to H^1(\Omega_1)$ is the recovery of the trace. 
So taking the limit $m \to +\infty$, using \eqref{Greenw}, \eqref{howto}, with 
the 
equation \eqref{tag10}, 
we get 
\begin{equation*}
	\bigl\langle (\gamma_{\rm N} \nabla u)_{|_{\Gamma_1}}, w \bigr \rangle_{H^{-1/2}(\Gamma_1),\tilde{H}^{1/2}(\Gamma_1)} = 
	\langle \gamma_{\rm N} \nabla u, \tilde{w} \rangle_{H^{-1/2}(\partial \Omega_1),H^{1/2}(\partial \Omega_1)} 
	= 0. 
\end{equation*}
It means $(\gamma_{\rm N} \nabla u)_{|_{\Gamma_1}}=0$ in $H^{-1/2}(\Gamma_1)$, namely from the comparison in the equation, we obtain 
$\partial_{\boldsymbol{n}}u =(\gamma_{\rm N} \nabla u)_{|_{\Gamma_1}}=0$ in $L^2(\Gamma_1)$. Thus, we deduce the 
part of the {N}eumann boundary condition \eqref{tag11}. Analogously, we deduce \eqref{tag15} for $v$.  
Next, we prove \eqref{2ndtrans}. 
For all $w \in \tilde{H}^{1/2}(S)$, firstly we consider the natural $0$-extension $\tilde{w}^1$ to $\partial \Omega_1$. Then, we can take ${\mathcal R}_1 \tilde{w}^1 \in H^{1}(\Omega_1)$ as the test function \eqref{weak1a}, then 
\begin{equation}
	\int_{\Omega_1} \bigl( \partial_t u_{\alpha_m} +\xi_{\alpha_m} + \pi_1(u_{\alpha_m})-g_1 \bigr){\mathcal R}_1 \tilde{w}^1  \dx 
	+ \int_{\Omega_1} \!  \nabla u_{\alpha_m} \cdot \nabla {\mathcal R}_1 \tilde{w}^1 \dx - \int_S \partial_{\boldsymbol{\nu}} u_{\alpha_m} w \dS =0. 
	\label{merge1}
\end{equation}
Secondly we consider the natural $0$-extension $\tilde{w}^2$ to $\partial \Omega_2$. 
Then, using the recovery ${\mathcal R}_2: H^{1/2}(\partial \Omega_2) \to H^1(\Omega_2)$ of the trace, 
we can take ${\mathcal R}_2\tilde{w}^2 \in H^{1}(\Omega_2)$ as the test function \eqref{weak2a}, then 
\begin{equation}
	\int_{\Omega_2} \bigl( \partial_t v_{\alpha_m} +\psi_{\alpha_m} + \pi_2(v_{\alpha_m})-g_2 \bigr){\mathcal R}_2 \tilde{w}^2  \dx 
	+ \kappa \int_{\Omega_2} \!  \nabla v_{\alpha_m} \cdot \nabla {\mathcal R}_2 \tilde{w}^2 \dx - \kappa \int_S \partial_{-\boldsymbol{\nu}} v_{\alpha_m} w \dS =0. 
	\label{merge2}
\end{equation}
Taking care of the hidden condition 
$\partial_{\boldsymbol{\nu}} u_{\alpha_m} = -\kappa \partial_{-\boldsymbol{\nu}} v_{\alpha_m}$ 
in \eqref{jump1a} and \eqref{jump2a}, we can merge \eqref{merge1} and \eqref{merge2}. 
Taking the limit $m \to +\infty$ to the resultant, using \eqref{Greenw} with 
the equations \eqref{tag10} and \eqref{tag14}, 
we get 
\begin{equation*}
	\langle \gamma_{\rm N} \nabla u, \tilde{w}^1 
	\rangle_{H^{-1/2}(\partial \Omega_1), H^{1/2}(\partial \Omega_1)}
	=
	-\kappa \langle \gamma_{\rm N} \nabla v, \tilde{w}^2 
	\rangle_{H^{-1/2}(\partial \Omega_2), H^{1/2}(\partial \Omega_2)},
\end{equation*}
that is, 
\begin{equation*}
	\bigl\langle (\gamma_{\rm N} \nabla u)_{|_{S}}, w \bigr \rangle_{H^{-1/2}(S),\tilde{H}^{1/2}(S)} = 
	-\kappa \bigl\langle (\gamma_{\rm N}
	 \nabla v)_{|_{S-}}, w \bigr \rangle_{H^{-1/2}(S),\tilde{H}^{1/2}(S)},
\end{equation*}
where we denote the restriction 
of $\gamma _{\rm N}
 \nabla v \in H^{-1/2}(\partial \Omega_2)$ 
to $\tilde{H}^{1/2}(S)$ by 
$(\gamma _{\rm N} \nabla v)_{|_{S}-} \in H^{-1/2}(S)$.  
Therefore, all equations \eqref{tag10}--\eqref{tag17} have been proven. 
\smallskip 

Next, we prove the uniqueness as the system of two problems. 
Going back to the weak formulations \eqref{weak1a} and \eqref{weak2a} for $\alpha_m$, 
let $y \in H^1(\Omega_1)$ 
and $z \in H^1(\Omega_2)$ satisfying $y=z$ a.e.\ on $S$, and 
taking them as test functions, respectively. 
Then, from \eqref{jump1a} and \eqref{jump2a} we have
\begin{equation*}
	\int_S \partial_{\boldsymbol{\nu}} u_{\alpha_m}y \dS = \int_S \alpha (v_{\alpha_m}-u_{\alpha_m}) y \dS  
	= -\kappa \int_S \partial_{-\boldsymbol{\nu}} v_{\alpha_m} z \dS.
\end{equation*}
By merging \eqref{weak1a} and \eqref{weak2a}, 
and by taking the limit $m \to +\infty$ 
\begin{align*}
	& \bigl( \partial_t u +\xi+\pi_1(u),y \bigr)_{L^2(\Omega_1)}
	+
	\bigl( \partial_t v +\psi + \pi_2(v),z \bigr)_{L^2(\Omega_2)} 
	+ \int_{\Omega_1} \!  \nabla u \cdot \nabla y \dx 
	\\
	& \quad {}+ \kappa \int_{\Omega_2} \! \nabla v \cdot \nabla z \dx =
	(g_1, y )_{L^2(\Omega_1)}
	+( g_2, z )_{L^2(\Omega_2)}.
\end{align*}
From now on, let 
$(u^{(i)}, \xi^{(i)}, v^{(i)}, \psi^{(i)})$ for $i=1,2$ be two solutions. 
Denote 
$\bar{u}:=u^{(1)}-u^{(2)}$ and $\bar{v}:=v^{(1)}-v^{(2)}$, respectively. 
From one of the transmission condition, $u^{(i)}=v^{(i)}$ a.e.\ on $S$, we see that 
$\bar{u}=\bar{v}$ a.e.\ on $S$. 
Take the difference of 
above weak formulations for 
$(u^{(1)}, v^{(2)})$ and $(u^{(2)}, v^{(2)})$, 
choose $y:=\bar{u}$ and $z := \bar{v}$, respectively. Then, by using the monotonicity of 
$\beta$ 
\begin{align*}
	\frac{1}{2} \frac{\d}{\dt} \| \bar{u} \|_{L^2(\Omega_1)}^2 
	+ 
	\frac{1}{2} \frac{\d}{\dt} \| \bar{v} \|_{L^2(\Omega_2)}^2 
	+
	\int_{\Omega_1} \!\!  | \nabla \bar{u} |^2 \dx  
	+ \kappa \int_{\Omega_2} \!\!  | \nabla \bar{v} |^2 \dx 
	\le L_1\| \bar{u} \|_{L^2(\Omega_1)}^2 +L_2 \| \bar{v} \|_{L^2(\Omega_2)}^2.
\end{align*}  
It means that the {G}ronwall inequality implies the uniqueness. 
Thus, these subsequence convergence hold in the sense of all sequence. 
\smallskip 

Finally, we prove the rate of convergence. Assume that $\partial _{\boldsymbol{\nu}} u \in L^2(0,T;L^2(S))$. 
Take the difference between equations
\eqref{tag10} and \eqref{heat1a}, boundary conditions 
\eqref{tag11} and \eqref{bc1a}, transmission conditions 
\eqref{tag12} and \eqref{jump1a}, and initial conditions 
\eqref{tag13} and \eqref{ini1a}, respectively.  
Multiplying the resultant by $u-u_\alpha$, integrating it over $(0,\tau) \times \Omega_1$, and using the 
differences of boundary and transmission conditions, and {\rm (A2)}, we deduce 
\begin{align}
	& \frac{1}{2} \bigl\| u(\tau)-u_\alpha(\tau) \bigr\|_{L^2(\Omega_1)}^2 
	+ 
	\int_0^\tau \! \! \int_{\Omega_1} \bigl| \nabla (u-u_\alpha) \bigr|^2 \dx \dt
	\notag \\
	& {}
	 - \int_0^\tau \langle \gamma_{\rm N} \nabla u, 
	 u-u_\alpha \rangle_{H^{-1/2}(\partial \Omega_1),H^{1/2}(\partial \Omega_1)} \dt 
	+ 
	\int_0^\tau \! \! \int_S \partial _{\boldsymbol{\nu}} u_\alpha (u-u_\alpha) \d S \dt 
	\notag \\
	& \le L_1 \int_0^\tau \| u-u_\alpha \|_{L^2(\Omega_1)}^2 \dt 
	+ \frac{1}{2} \| u_0 - u_{0\alpha}\|_{L^2(\Omega_1)}^2
	\label{same}
\end{align}
for all $\tau \in [0,T]$. 
Here, from $u=v$ a.e.\ on $S$, and \eqref{jump2a} we have 
\begin{align*}
	\int_S \partial _{\boldsymbol{\nu}} u_\alpha (u-u_\alpha) \d S
	& = \int_S (-\kappa \partial_{-\boldsymbol{\nu}} v_\alpha) (v-v_\alpha + v_\alpha
	- u_\alpha) \d S \\
	& = - \kappa  \int_S \partial_{-\boldsymbol{\nu}} v_\alpha (v-v_\alpha) \dS	
	 + \alpha \int_S |v_\alpha- u_\alpha|^2 \dS 
\end{align*}
a.e.\ in $(0,T)$. 
On the other hand from \eqref{tag11}, \eqref{tag13},
and the additional assumption $\partial _{\boldsymbol{\nu}} u \in L^2(0,T;L^2(S))$, we 
see that $-\kappa \partial _{-\boldsymbol{\nu}} v \in L^2(0,T;L^2(S))$ 
from \eqref{tag12}. Next, we have already known that 
$\partial _{\boldsymbol{n}} u =0 \in L^2(0,T;L^2(\Gamma_1))$ and  
$\kappa \partial _{\boldsymbol{n}} v =0 \in L^2(0,T;L^2(\Gamma_2))$. Therefore, using \eqref{tag11},  \eqref{tag12}, \eqref{tag15}, and 
 \eqref{tag16} we get
\begin{align*}
	& \langle \gamma_{\rm N} \nabla u, 
	 u-u_\alpha \rangle_{H^{-1/2}(\partial \Omega_1),H^{1/2}(\partial \Omega_1)} \\
	& = \int_{\Gamma_1} \partial_{\boldsymbol{n}} u (u - u_\alpha) \dg
	 + \int_{S} \partial_{\boldsymbol{\boldsymbol{\nu}}} u (u - u_\alpha) \dS	\\
	 	& = 
	  \int_{S} \partial_{\boldsymbol{\boldsymbol{\nu}}} 
	 u (v - v_\alpha + v_\alpha - u_\alpha) \dS \\
	 & = 
	 \int_{S} (-\kappa \partial_{\boldsymbol{\boldsymbol{-\nu}}} 
	 v) (v - v_\alpha) \dS
	 +
	 \int_{S} \partial_{\boldsymbol{\boldsymbol{\nu}}} 
	 u (v_\alpha - u_\alpha) \dS \\
	 & = -\kappa \langle \gamma_{\rm N} \nabla v, 
	 v-v_\alpha \rangle_{H^{-1/2}(\partial \Omega_2),H^{1/2}(\partial \Omega_2)}
	 +
	 \int_{S} \partial_{\boldsymbol{\boldsymbol{\nu}}} 
	 u (v_\alpha - u_\alpha) \dS 
\end{align*}
for all $\tau \in [0,T]$. 
Analogously, we obtain the same kind of inequality for $v-v_\alpha$ like
\eqref{same}. Therefore, merging them we deduce 
\begin{align}
	& \frac{1}{2} \bigl\| u(\tau)-u_\alpha(\tau) \bigr\|_{L^2(\Omega_1)}^2 
	+\frac{1}{2} \bigl\| v(\tau)-v_\alpha(\tau) \bigr\|_{L^2(\Omega_2)}^2 
	+ 
	\int_0^\tau \! \! \int_{\Omega_1} \bigl| \nabla (u-u_\alpha) \bigr|^2 \dx \dt
	\notag \\
	& {}
	+ \kappa
	\int_0^\tau \! \! \int_{\Omega_2} \bigl| \nabla (v-v_\alpha) \bigr|^2 \dx \dt
	+ 
	\alpha \int_0^\tau \| u_\alpha-v_\alpha \|_{L^2(S)}^2 \dt 
	\notag \\
	& \le L_1 \int_0^\tau \| u-u_\alpha \|_{L^2(\Omega_1)}^2 \dt 
	+
	L_2 \int_0^\tau \| v-v_\alpha \|_{L^2(\Omega_2)}^2 \dt 
	+ 
	\frac{1}{2} \| u_0 - u_{0\alpha}\|_{L^2(\Omega_1)}^2
	\notag \\
	& \quad {}
	+ 
	\frac{1}{2} \| v_0 - v_{0\alpha}\|_{L^2(\Omega_2)}^2
	+ 
	\frac{1}{2 \alpha}
	\int_0^\tau \! \! \| \partial _{\boldsymbol{\nu}} u \|_{L^2(S)}^2 \dt 
	+ 
	\frac{\alpha}{2} \int_0^\tau \| u_\alpha-v_\alpha \|_{L^2(S)}^2 \dt 
	\label{error2}
\end{align}
for all $\tau \in [0,T]$. Thus, applying the {G}ronwall inequality we get 
\begin{align}
	& \bigl\| u(\tau)-u_\alpha(\tau) \bigr\|_{L^2(\Omega_1)}^2 
	+\bigl\| v(\tau)-v_\alpha(\tau) \bigr\|_{L^2(\Omega_2)}^2 
	\notag \\
	& \le \left( \| u_0 - u_{0\alpha}\|_{L^2(\Omega_1)}^2
	+ 
	\| v_0 - v_{0\alpha}\|_{L^2(\Omega_2)}^2
	+ 
	\frac{1}{\alpha}
	\| \partial _{\boldsymbol{\nu}} u \|_{L^2(0,T;L^2(S))}^2 \right) 
	e^{2(L_1+L_2)T}
	\label{error3}
\end{align}
for all $\tau \in [0,T]$. Thus, \eqref{error2} and \eqref{error3} deduce the conclusion \eqref{errorsp}. 
\hfill $\Box$

\section{Relationship with the Mosco convergence}

Recalling the concept of convergence for the convex functional \cite{Att84, Sch00c}, we discuss the characterization of the 
asymptotic analysis which is discussed in the previous section. 
Base on the abstract theory of {A}ttouch \cite[Proposition~3.60, Theorem~3.66]{Att84} or \cite[Theorem~4.1]{Sch00c} we can expect that the convex functional $\varphi_\alpha$ which is defined by 
\eqref{phi_a} converges to some convex functional in a suitable sense. To 
clarify this fact, we recall the following concept of {M}osco convergence.

\begin{definition} Let $\phi_n$ and $\phi$ be proper, lower semi continuous, and convex functionals in a {H}ilbert space ${\mathcal H}$ for $n \in \mathbb{N}$. Then, it is said be 
$\phi_n$ converges to $\phi$ on ${\mathcal H}$ in the sense of {M}osco, if and only if 
the following conditions hold:
\begin{enumerate}
\item[(M1)] If the sequence $\{ u_n \}$ of ${\mathcal H}$ converges to $u$ weakly in ${\mathcal H}$, then the limit infimum inequality holds 
\begin{equation*}
	\phi(u) \le \liminf_{n \to +\infty} \phi_n (u_n);
\end{equation*} 
\item[(M2)] For all $u \in {\mathcal H}$, there exists a recovery sequence $\{u_n \}$ of ${\mathcal H}$ which converges to $u$ 
strongly in ${\mathcal H}$ such that
 \begin{equation*}
	\phi(u) \ge \limsup_{n \to +\infty} \phi_n (u_n).
\end{equation*}
\end{enumerate}
\end{definition}

Then we obtain the following characterization:
\begin{theorem}\label{Mosco} The convex functional $\varphi_\alpha$ converges to 
$\varphi_0$ in the sense of {M}osco as $\alpha \to 0$, and $\varphi_\alpha$ converges to 
$\varphi_\infty$ in the sense of {M}osco as $\alpha \to + \infty$ respectively, where
$\varphi_0, \varphi_\infty: {\mathcal H} \to [0,+\infty]$ are defined by 
\begin{gather}
	\varphi_0(U):=
	\begin{cases}
	\displaystyle 
	\frac{1}{2} \int_{\Omega_1} |\nabla u|^2 \dx + 
	\frac{\kappa}{2} \int_{\Omega_2} |\nabla v|^2 \dx
	& {\it if~} 
	U :=(u,v) 
	\in H^1(\Omega_1) \times H^1(\Omega_2), \\
	+\infty & {\it otherwise,}
	\end{cases}\label{phi_0}
\end{gather}
that is, $D(\varphi_0)=D(\varphi_\alpha)=H^1(\Omega_1) \times H^1(\Omega_2)$,  
\begin{gather}
	\varphi_\infty(U):=
	\begin{cases}
	\displaystyle 
	\frac{1}{2} \int_{\Omega_1} |\nabla u|^2 \dx + 
	\frac{\kappa}{2} \int_{\Omega_2} |\nabla v|^2 \dx
	& {\it if~} 
	U :=(u,v) 
	\in {\mathcal V}, \\
	+\infty & {\it otherwise,}
	\end{cases}
	\label{phi_inf}
\end{gather}
where ${\mathcal V}:=H^1(\Omega)$ and  
$D(\varphi_\infty)={\mathcal V}$.
\end{theorem}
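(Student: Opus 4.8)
The plan is to check the two Mosco conditions (M1) and (M2) directly for each limit, letting $\alpha$ range along an arbitrary sequence $\alpha_n\to 0$ (resp.\ $\alpha_n\to+\infty$). The recovery condition (M2) is the easy one in both regimes. Given $U=(u,v)\in{\mathcal H}$, take the constant sequence $U_n:=U$, which converges to $U$ strongly in ${\mathcal H}$. If $U\notin D(\varphi_0)$ (resp.\ $U\notin D(\varphi_\infty)={\mathcal V}$) the right-hand side of the inequality in (M2) equals $+\infty$ and nothing has to be verified. If $U\in D(\varphi_0)=H^1(\Omega_1)\times H^1(\Omega_2)$, then, since $u-v$ has a trace in $L^2(S)$ and $\alpha_n\to 0$,
\begin{equation*}
	\varphi_{\alpha_n}(U)=\varphi_0(U)+\frac{\alpha_n}{2}\|u-v\|_{L^2(S)}^2\longrightarrow \varphi_0(U);
\end{equation*}
while if $U\in{\mathcal V}$, then $u=v$ a.e.\ on $S$, so the penalty term vanishes identically and $\varphi_{\alpha_n}(U)=\varphi_\infty(U)$ for all $n$. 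In either case $\limsup_n\varphi_{\alpha_n}(U_n)\le\varphi_0(U)$, resp.\ $\le\varphi_\infty(U)$, which is (M2).

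For (M1) let $U_n=(u_n,v_n)\to U=(u,v)$ weakly in ${\mathcal H}$. We may assume $\liminf_n\varphi_{\alpha_n}(U_n)<+\infty$ and, passing to a subsequence that attains this liminf, that $\varphi_{\alpha_n}(U_n)\le C<+\infty$. Then $\{\nabla u_n\}$ and $\{\nabla v_n\}$ are bounded in $\boldsymbol{L}^2(\Omega_1)$ and $\boldsymbol{L}^2(\Omega_2)$; since $\{u_n\}$, $\{v_n\}$ are also bounded in $L^2(\Omega_1)$, $L^2(\Omega_2)$ by the weak convergence, a further subsequence satisfies $u_n\rightharpoonup u$ in $H^1(\Omega_1)$ and $v_n\rightharpoonup v$ in $H^1(\Omega_2)$ (the weak $H^1$-limits coincide with the weak $L^2$-limits). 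By weak lower semicontinuity of the convex functional $(u,v)\mapsto\frac{1}{2}\|\nabla u\|_{L^2(\Omega_1)}^2+\frac{\kappa}{2}\|\nabla v\|_{L^2(\Omega_2)}^2$ and the nonnegativity of the penalty term,
\begin{equation*}
	\frac{1}{2}\|\nabla u\|_{L^2(\Omega_1)}^2+\frac{\kappa}{2}\|\nabla v\|_{L^2(\Omega_2)}^2
	\le\liminf_n\varphi_{\alpha_n}(U_n).
\end{equation*}
When $\alpha\to 0$ the left-hand side is exactly $\varphi_0(U)$ (recall $D(\varphi_0)=H^1(\Omega_1)\times H^1(\Omega_2)$) and (M1) follows. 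When $\alpha\to+\infty$ we first show $U\in{\mathcal V}$: from $\frac{\alpha_n}{2}\|u_n-v_n\|_{L^2(S)}^2\le C$ and $\alpha_n\to+\infty$ we get $u_n-v_n\to 0$ strongly in $L^2(S)$; on the other hand the trace maps $H^1(\Omega_i)\to L^2(S)$ (interpreted via the restriction to $\tilde{H}^{1/2}(S)$ in Case~1, as in Section~2) are bounded linear, hence weakly continuous, so $u_n\rightharpoonup u$ and $v_n\rightharpoonup v$ on $S$ in $L^2(S)$; comparing the two limits yields $u=v$ a.e.\ on $S$, that is, $\tilde{u}+\tilde{v}\in H^1(\Omega)$ and $U\in{\mathcal V}$. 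Then the left-hand side above equals $\varphi_\infty(U)$, which is (M1).

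The only delicate step is this last identification of the weak limit with an element of ${\mathcal V}$, resting on the weak continuity of the trace restricted to the (possibly open, in Case~1) surface $S$ and on the characterization $U\in{\mathcal V}\iff u=v$ a.e.\ on $S$ (with $u\in H^1(\Omega_1)$, $v\in H^1(\Omega_2)$). In Case~2, where $S=\partial\Omega_1$, this is classical; in Case~1 it relies on the Lions--Magenes--Strichartz framework recalled in Section~2. Everything else is the routine combination of weak lower semicontinuity of convex functionals with the penalization structure of $\varphi_\alpha$, so no additional a priori estimates are needed.
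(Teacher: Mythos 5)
Your proposal is correct and follows essentially the same route as the paper: constant recovery sequences for (M2) (with the penalty term vanishing in the limit, respectively identically on ${\mathcal V}$), and for (M1) a subsequence attaining the liminf, boundedness of the gradients, weak lower semicontinuity, and—in the $\alpha\to+\infty$ case—the penalty bound combined with weak continuity of the trace to force $u=v$ a.e.\ on $S$. The only difference is that you spell out the identification of the weak $H^1$-limits and the trace argument slightly more explicitly than the paper does.
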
 

\begin{proof} Let $\alpha_n \to 0$ as $n \to +\infty$. Let $\{ U_n \}:=\{ (u_n,v_n)\}$ be 
a sequence of $\mathcal H$ which converges to $U:=(u,v)$ weakly in ${\mathcal H}$ as $n \to +\infty$.  
If $\liminf_{n \to +\infty} \varphi_{\alpha_n}(U_n)=+\infty$, then the condition (M1) is automatically holds. 
Therefore, assume that $r:=\liminf_{n \to +\infty} \varphi_{\alpha_n}(U_n)<+\infty$. Then there exists a 
subsequence $\{n_k \}$ such that $\lim_{k\to +\infty} \varphi_{\alpha_{n_k}}(U_{n_k})=r$ and 
\begin{align}
	u_{n_k} \to u & \quad {\rm weakly~in~} H^1(\Omega_1), \quad {\rm in~} L^2(\Omega_1), \label{m1}\\
	v_{n_k} \to v & \quad {\rm weakly~in~} H^1(\Omega_2), \quad {\rm in~} L^2(\Omega_2)\label{m2}
\end{align} 
as $k \to +\infty$. Therefore, from the positivity $(\alpha_{n_k}/2)\| u_{n_k}-v_{v_k}\|_{L^2(S)}^2$ and the 
weakly lower semicontinuity of the norm 
\begin{align*}
	\liminf_{n \to +\infty} \varphi_{\alpha_n}(U_n) = r 
	& = \lim_{k\to +\infty} \varphi_{\alpha_{n_k}}(U_{n_k}) \\
	& = \liminf_{k\to +\infty} \varphi_{\alpha_{n_k}}(U_{n_k})\\
	& \ge \varphi_{0}(U).
\end{align*}
Thus, we can show the condition {\rm (M1)}. 
The condition {\rm (M2)} clearly holds because the domains 
are same $D(\varphi_0)=D(\varphi_\alpha)=H^1(\Omega_1) \times H^1(\Omega_2)$. Therefore, 
for all $U \in D(\varphi_0)$, we can choose the recovery sequence
$U_n:=U$ itself and then 
$\limsup_{n \to +\infty} \varphi_{\alpha_n}(U_n) = \varphi_{0}(U)$ holds, because 
\begin{equation*}
	\frac{\alpha_{n}}{2} \int_S | u_n-v_n | ^2 \dS = \frac{\alpha_{n}}{2} \int_S | u-v | ^2 \dS\to 0
\end{equation*}
as $n \to \infty$, 
see the definition \eqref{phi_a} of $\varphi_\alpha$. We conclude that 
$\varphi_\alpha$ converges to 
$\varphi_0$ in the sense of {M}osco as $\alpha \to 0$.
\smallskip

Next, let $\alpha_m \to \infty$ as $m \to +\infty$. Let $\{ U_m \}$ be 
a sequence of $\mathcal H$ which converges to $U$ weakly in ${\mathcal H}$ as $m \to +\infty$.  
Assume that $r:=\liminf_{m \to +\infty} \varphi_{\alpha_m}(U_m)<+\infty$. Then, there exists a 
subsequence $\{m_k \}$ such that $\lim_{k\to +\infty} \varphi_{\alpha_{m_k}}(U_{m_k})=r$ and same kind 
of weak and strong convergence \eqref{m1} and \eqref{m2} hold for $\{u_{m_k}\}$ and $\{ v_{m_k}\}$
as $k \to +\infty$. Moreover, from the boundedness of $\{ \varphi_{\alpha_{m_k}}(U_{m_k})\}$, we see that 
there exists a positive constant $M_r >0$ such that 
\begin{equation}
	\frac{\alpha_{m_k}}{2} \int_S | u_{m_k}-v_{m_k} | ^2 \dS \le M_r \label{m3}
\end{equation}
for all $k \in \mathbb{N}$. 
The weakly convergence in $H^1(\Omega_1)$ and $H^1(\Omega_2)$ with \eqref{m3} guarantee the condition 
$u=v$ a.e.\ on $S$, that is, $U:=\tilde{u}+\tilde{v} \in {\mathcal V}=H^1(\Omega)$.   
Therefore, we can show the condition {\rm (M1)}. 
The condition {\rm (M2)} also holds. Indeed, we have $D(\varphi_\infty) \subset D(\varphi_\alpha)$. Thus, 
for all $U \in D(\varphi_\infty) ={\mathcal V}$, we can choose the recovery sequence 
$U_m:=U \in H^1(\Omega_1) \times H^1(\Omega_2)$ itself. Therefore, we deduce 
\begin{align*}
	\limsup_{m \to +\infty} \varphi_{\alpha_m}(U_m) & =  \limsup_{m \to +\infty} \varphi_{\alpha_m}(U) \\
	& =  \frac{1}{2} \int_{\Omega_1} |\nabla u|^2 \dx + 
	\frac{\kappa}{2} \int_{\Omega_2} |\nabla v|^2 \dx\\
	& = \varphi_{\infty}(U).
\end{align*}
Thus, we conclude that 
$\varphi_\alpha$ converges to 
$\varphi_\infty$ in the sense of {M}osco as $\alpha \to + \infty$.
\end{proof}
\smallskip

We can conclude this section that the boundary term in the convex functional $\varphi_\alpha$ works as the 
penalty term when $\alpha \to +\infty$. Moreover, 
the difference of the convex functionals $\varphi_0$ and $\varphi_\infty$ is only their domain 
(compare \eqref{phi_0} with \eqref{phi_inf}), 
However, the structure of the system is drastically changed.

\section{Non-autonomous permeability and blowing up situations}

In this section, we discuss the well-posedness for the case where 
$\alpha:=\alpha(t)$, i.e., when $\alpha$ depends on the time variable. 
Moreover, the situation $\alpha(t) \to +\infty$ as $t \to T^*<T$ is also considered. 
Hereafter, we will select one of the following assumptions:
\begin{enumerate}
	\item[(A8)] $\alpha \in W^{1,1}(0,T)$, \quad or 
	\item[(A9)] $\alpha \in C^1([0,T^*))$, $\alpha>0$ and $\alpha(t) \to +\infty$ as $t \to T^*$ where $T^* \in (0,T)$. 
\end{enumerate}
Hereafter, the initial conditions are independent of $\alpha$, that is, $u_{0\alpha}:=u_0$, 
$v_{0\alpha}:=v_0$. 

\begin{proposition}\label{pro2} Assume {\rm (A0)}--{\rm (A2)}, {\rm (A3)} with $u_{0\alpha}:=u_0$, 
$v_{0\alpha}:=v_0$, and {\rm (A8)} holds. 
Then, there exists a unique quadruplet $(u, \xi, v, \psi)$ of functions 
\begin{gather*}
	u \in H^1\bigl(0,T; L^2(\Omega_1) \bigr) \cap L^\infty\bigl(0,T; H^1(\Omega_1) \bigr),
	\quad \Delta u, \xi \in L^2\bigl(0,T; L^2(\Omega_1) \bigr), \\
	v \in H^1\bigl(0,T; L^2(\Omega_2) \bigr) \cap L^\infty\bigl(0,T; H^1(\Omega_2) \bigr),
	\quad \Delta v, \psi \in L^2\bigl(0,T; L^2(\Omega_2) \bigr)
\end{gather*}
such that \eqref{heat1a}--\eqref{bc2a}, 
$u(0)=u_0$ a.e.\ in $\Omega_1$, and $v(0)=v_0$ a.e.\ in $\Omega_2$ hold. 
\end{proposition}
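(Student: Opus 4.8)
The plan is to follow the proof of Proposition~\ref{pro}, now with the non-autonomous family of subdifferentials $\{\partial\varphi_{\alpha(t)}\}_{t\in[0,T]}$ on $\mathcal{H}=L^2(\Omega_1)\times L^2(\Omega_2)$ together with the theory of evolution equations governed by time-dependent subdifferentials (see, e.g., \cite{Att84, Bar10}). First I would fix $\lambda>0$, replace $\beta$ by its Yosida regularization $\beta_\lambda$, and solve the Cauchy problem \eqref{ee} with $\partial\varphi_{\alpha(t)}$ in place of $\partial\varphi_\alpha$. Since $\alpha\in W^{1,1}(0,T)\hookrightarrow C([0,T])$, the function $\alpha$ is bounded (and nonnegative, being a permeability), so each $\varphi_{\alpha(t)}$ is proper, lower semicontinuous and convex with the $t$-independent effective domain $D(\varphi_{\alpha(t)})=H^1(\Omega_1)\times H^1(\Omega_2)$. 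The crucial new ingredient is the time-dependence (Kenmochi-type) condition: for $0\le s\le t\le T$ and $U=(u,v)\in D(\varphi_{\alpha(s)})$, the trace inequality $\|u-v\|_{L^2(S)}^2\le C_{\rm tr}^2(\|u\|_{H^1(\Omega_1)}^2+\|v\|_{H^1(\Omega_2)}^2)$ together with $\|\nabla u\|_{L^2(\Omega_1)}^2+\|\nabla v\|_{L^2(\Omega_2)}^2\le C\varphi_{\alpha(s)}(U)$ give
\[
\varphi_{\alpha(t)}(U)-\varphi_{\alpha(s)}(U)=\frac{\alpha(t)-\alpha(s)}{2}\,\|u-v\|_{L^2(S)}^2\le C\,|\alpha(t)-\alpha(s)|\,\bigl(1+\varphi_{\alpha(s)}(U)+\|U\|_{\mathcal H}^2\bigr);
\]
since $t\mapsto C\int_0^t|\alpha'(\sigma)|\d\sigma$ lies in $W^{1,1}(0,T)$, this is exactly the hypothesis under which the abstract theory yields a unique $U_\lambda\in H^1(0,T;\mathcal H)$ with $\varphi_{\alpha(\cdot)}(U_\lambda(\cdot))\in L^\infty(0,T)$ and a selection $\eta_\lambda\in L^2(0,T;\mathcal H)$ of $\partial\varphi_{\alpha(t)}(U_\lambda(t))$; the Lipschitz term $B_\lambda$ is handled as in the autonomous case.

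Next I would derive $\lambda$-independent a priori estimates paralleling Lemmas~\ref{1st} and~\ref{2nd}. Testing with $U_\lambda$ and with $(\beta_\lambda(u_\lambda),\beta_\lambda(v_\lambda))$ reproduces \eqref{est1}--\eqref{est1b} verbatim, since the $\alpha(t)$-boundary term has a favorable sign there and is not differentiated. The one genuinely new computation is the test with $\partial_t U_\lambda$: using the chain rule for time-dependent convex functionals, its boundary contribution is
\[
\int_S\alpha(t)(v_\lambda-u_\lambda)\,\partial_t(u_\lambda-v_\lambda)\dS=-\frac{\d}{\dt}\!\left(\frac{\alpha(t)}{2}\|u_\lambda-v_\lambda\|_{L^2(S)}^2\right)+\frac{\alpha'(t)}{2}\|u_\lambda-v_\lambda\|_{L^2(S)}^2,
\]
so that, after integration in time, an extra term $\tfrac12\int_0^\tau|\alpha'(t)|\,\|u_\lambda(t)-v_\lambda(t)\|_{L^2(S)}^2\dt$ appears on the right-hand side. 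Setting
\[
E_\lambda(t):=\tfrac12\|\nabla u_\lambda(t)\|_{L^2(\Omega_1)}^2+\tfrac{\kappa}{2}\|\nabla v_\lambda(t)\|_{L^2(\Omega_2)}^2+\tfrac{\alpha(t)}{2}\|u_\lambda(t)-v_\lambda(t)\|_{L^2(S)}^2+\int_{\Omega_1}\!j_\lambda(u_\lambda(t))\dx+\int_{\Omega_2}\!j_\lambda(v_\lambda(t))\dx,
\]
the trace inequality together with the already-established bound \eqref{est1} on $\|u_\lambda\|_{L^\infty(0,T;L^2(\Omega_1))}$ and $\|v_\lambda\|_{L^\infty(0,T;L^2(\Omega_2))}$ yields $\tfrac12|\alpha'(t)|\,\|u_\lambda(t)-v_\lambda(t)\|_{L^2(S)}^2\le C|\alpha'(t)|\bigl(1+E_\lambda(t)\bigr)$; since $1+|\alpha'|\in L^1(0,T)$ and $E_\lambda(0)<\infty$ by {\rm (A3)}, the Gronwall inequality with an $L^1$ coefficient bounds $\sup_t E_\lambda(t)$ and $\|\partial_t U_\lambda\|_{L^2(0,T;\mathcal H)}$ uniformly in $\lambda$ (depending on $\alpha$ only through $\alpha(0)$ and $\|\alpha'\|_{L^1(0,T)}$). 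The bounds for $\Delta u_\lambda$ and $\Delta v_\lambda$ then follow by comparison in the equations, exactly as in Lemma~\ref{2nd}.

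Finally I would pass to the limit $\lambda\to0$ as in the proof of Proposition~\ref{pro}: the uniform estimates give weak-star and weak limits $u,\xi,v,\psi$ in the asserted classes, the Aubin--Lions compactness theorem gives $u_\lambda\to u$ in $C([0,T];L^2(\Omega_1))$ and $v_\lambda\to v$ in $C([0,T];L^2(\Omega_2))$, hence $\xi\in\beta(u)$, $\psi\in\beta(v)$ by demiclosedness and $\pi_i(u_\lambda)\to\pi_i(u)$; applying Lemma~\ref{chara} pointwise in $t$ with the value $\alpha(t)$ in place of $\alpha$ (the case $\alpha(t)=0$ being covered by the analogous characterization of $\partial\varphi_0$) recovers \eqref{heat1a}--\eqref{bc2a} with the time-dependent coefficient, and the strong convergence gives $u(0)=u_0$ and $v(0)=v_0$. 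Uniqueness for the limit problem is standard: for two solutions, the difference $(\bar u,\bar v)$ satisfies, after testing by $(\bar u,\bar v)$ and using monotonicity of $\beta$, an energy inequality whose boundary term equals $-\alpha(t)\|\bar u-\bar v\|_{L^2(S)}^2\le0$, so Gronwall gives $\bar u=\bar v=0$. I expect the main obstacle to be the non-autonomous boundary penalty: one must (i) verify the abstract time-dependence condition so that the non-autonomous existence theory applies, and (ii) control the $\alpha'(t)$-term produced when the boundary energy is differentiated in time, which is exactly where the full $W^{1,1}$-regularity of $\alpha$ (rather than mere continuity) is used, through the $L^1$-Gronwall inequality.
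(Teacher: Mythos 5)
Your proposal is correct and takes essentially the same route as the paper: the paper's entire proof consists of verifying exactly the time-dependence estimate $|\varphi^t(U)-\varphi^s(U)|\le |\alpha(t)-\alpha(s)|\bigl(\varphi^s(U)+C_R\bigr)$ that you derive, and then omitting the remaining details by appealing to the abstract theory of evolution equations governed by time-dependent subdifferentials \cite{AD75, Bar10}. Your additional work --- the Yosida approximation, the $\alpha'(t)$-term produced when the boundary energy is differentiated (controlled via the $L^1$-Gronwall inequality, which is precisely where $W^{1,1}$-regularity of $\alpha$ enters), and the limit passage --- simply fills in what the paper delegates to the cited reference.
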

\smallskip

For $t \in [0,T]$, define a proper, lower semi continuous, and convex functional 
$\varphi^t: {\mathcal H} \to [0,+\infty]$ by 
\begin{equation}
	\varphi^t(U):=
	\begin{cases}
	\displaystyle 
	\frac{1}{2} \int_{\Omega_1} |\nabla u|^2 \dx + 
	\frac{\kappa}{2} \int_{\Omega_2} |\nabla v|^2 \dx + 
	\frac{\alpha(t)}{2} \int_{S} |u-v|^2 \dS & {\rm if~} 
	U \in D(\varphi^t)
	, \\
	+\infty & {\rm otherwise,}
	\end{cases}\label{phi_t}
\end{equation}
where $D(\varphi^t):=H^1(\Omega_1) \times H^1(\Omega_2)$ which is independent of $t \in [0,T]$. 
In such a case, we can apply the abstract result of time dependent subdifferential operator 
\cite{AD75, Bar10}. Indeed for all $R>0$,  
$U:=(u,v) \in H^1(\Omega_1) \times H^1(\Omega_2)$ satisfying $\| U\|_{\mathcal H} \le R$, and $s,t \in [0,T]$
\begin{align*}
	\bigl| \varphi^t(U)-\varphi^s(U) \bigr| & \le 
	\bigl| \alpha(t) - \alpha(s) \bigr| \int_S |u-v|^2 \dS \\
	& \le \bigl| \alpha(t) - \alpha(s) \bigr| \bigl( \varphi^s(U) + C_R \bigr)
\end{align*}
where $C_R$ is a positive constant depends on $R$. Therefore, the assumption {\rm (A8)} is surely the 
condition that we can obtain the well-posedness for the evolution equation of the form \eqref{ee}
(see, e.g., \cite[p.54]{AD75}). Therefore, we omit the proof of Proposition~\ref{pro2}. 
\smallskip

The final part of this paper, 
we examine an interesting scenario in which the permeability coefficient 
exhibits blow up at a finite time $T^* \in (0,T)$, yet the system remains 
well-posed for times beyond $T^*$. Let $\kappa =1$, and $\pi_1=\pi_2=:\pi$.  

\begin{theorem}\label{scenario} Assume {\rm (A0)}--{\rm (A2)}, {\rm (A3)} with $u_{0\alpha}:=u_0$, 
$v_{0\alpha}:=v_0$, and {\rm (A9)} holds. 
Then, there exists a unique quadruplet $(u, \xi, v, \psi)$ of functions in the following class
\begin{gather}
	u \in H^1_{\rm loc}\bigl([0,T^*); L^2(\Omega_1) \bigr) 
	\cap L^\infty_{\rm loc}\bigl([0,T^*); H^1(\Omega_1) \bigr), \quad 
	\Delta u \in L^2_{\rm loc}\bigl([0,T^*); L^2(\Omega_1) \bigr), 
	\label{r1}
	\\
	u \in H^1(0,T^*; V_{1,0}') 
	\cap L^\infty \bigl(0,T^*; L^2(\Omega_1)\bigr)
	\cap L^2 \bigl(0,T^*; H^1(\Omega_1)\bigr),
	\label{r2}
	\\
	\xi \in L^2 \bigl( 0,T^*;L^2(\Omega_1) \bigr),
	\label{r7}
	\\
	v \in H^1 _{\rm loc} \bigl([0,T^*); L^2(\Omega_2) \bigr) 
	\cap L^\infty_{\rm loc} \bigl([0,T^*); H^1(\Omega_2) \bigr), \quad 
	\Delta v \in L^2_{\rm loc}\bigl([0,T^*); L^2(\Omega_2) \bigr),
	\label{r4}
	\\
	v \in 
	H^1(0,T^*; V_{2,0}' ) 
	\cap L^\infty \bigl(0,T^*; L^2(\Omega_2) \bigr)
	\cap L^2 \bigl(0,T; H^1(\Omega_2) \bigr)
	,\label{r5}
	\\
	\psi \in L^2 \bigl(0,T^*;L^2(\Omega_2) \bigr)
	\label{r8}
\end{gather}
such that 
\begin{align}
	\partial _t u- \Delta u + \xi + \pi(u) =g_1, \quad \xi \in \beta(u) & 
	\quad {\it a.e.~in~} (0,T^*) \times \Omega_1, \label{f1}\\
	\partial _t v - \Delta v + \psi + \pi(v) =g_2, \quad \psi \in \beta(v) & 
	\quad {\it a.e.~in~} (0,T^*) \times \Omega_2, \label{f2}\\
	\partial_{\boldsymbol{\nu}} u = \alpha(v-u) 
	& \quad {\it a.e.~on~} (0,T^*) \times S, \label{f3}\\
	 \partial_{-\boldsymbol{\nu}} v = \alpha(u-v)
	& \quad {\it a.e.~on~} (0,T^*) \times S,\label{f4}\\
	\partial_{\boldsymbol{n}} u = 0 
	& \quad {\it a.e.~on~} (0,T^*) \times \Gamma_1, \label{f5}\\
	\partial_{\boldsymbol{n}} v = 0 
	& \quad {\it a.e.~on~} (0,T^*) \times \Gamma_2, \label{f6}\\
	u(0) = u_{0} 
	& \quad {\it a.e.~in~} \Omega_1, \label{f7}\\
	v(0) = v_{0} 
	& \quad {\it a.e.~in~} \Omega_2. \label{f8}
\end{align}
Moreover, if the additional regularities 
$u \in C([0,T^*];H^1(\Omega_1)) \cap L^\infty(0,T^*;H^2(\Omega_1))$ and $v \in C([0,T^*];H^1(\Omega_2))$ hold and $\Omega$ is enough smooth, for example $C^2$-class, then $u(T^*)=v(T^*)$ a.e.\ on $S$ and 
$(u, v)$ can be extended beyond $T^*$ as the solution 
\begin{gather*}
	u \in C\bigl( [0,T];H^1(\Omega_1)\bigr), \quad 
	v \in C\bigl( [0,T];H^1(\Omega_2)\bigr),\\
	U := \tilde{u}+ \tilde{v}\in H^1 \bigl( T^*,T;L^2(\Omega) \bigr) \cap C \bigl( [T^*,T];H^1(\Omega) \bigr) 
	\cap L^2 \bigl( T^*,T;H^2(\Omega) \bigr),\\
	\Xi \in  L^2 \bigl( T^*,T;L^2(\Omega) \bigr)
\end{gather*}
of the homogeneous Neumann problem: 
\begin{align*}
	\partial_t U -\Delta U + \Xi + \pi(U) =G, \quad \Xi \in \beta(U) & \quad {\it a.e.~in~}(T^*,T) \times \Omega, \\
	\partial_{\boldsymbol{n}} U = 0 & \quad {\it a.e.~on~}(T^*,T) \times \partial \Omega, \\
	U(T^*)=\tilde{u}(T^*)+\tilde{v}(T^*)& \quad {\it a.e.~in~}\Omega
\end{align*}
corresponding to the case where $\alpha=+\infty$ in \eqref{f1}--\eqref{f8} a.e.\ on $(T^*,T)$. 
\end{theorem}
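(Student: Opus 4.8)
The plan is to construct the solution on $[0,T^*)$ by patching together solutions on compact subintervals and then to pass to the limit at $t=T^*$. For any $T'\in(0,T^*)$ one has $\alpha|_{[0,T']}\in C^1([0,T'])\subset W^{1,1}(0,T')$, so Proposition~\ref{pro2} (with $T$ replaced by $T'$) yields a unique quadruplet solving \eqref{f1}--\eqref{f8} on $(0,T')$; uniqueness forces these solutions to be compatible as $T'$ increases, producing a single $(u,\xi,v,\psi)$ defined on $[0,T^*)$. The global classes \eqref{r2}, \eqref{r5}, \eqref{r7}, \eqref{r8} follow because the estimates of Lemmas~\ref{1st} and \ref{3rd} are uniform in $\alpha$: in their proofs the term $\alpha(t)\|u-v\|_{L^2(S)}^2$ always carries a favorable sign and is simply discarded, so nothing changes when the constant $\alpha$ is replaced by $\alpha(t)>0$, and the bounds depend only on $g_i,u_0,v_0$. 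The local classes \eqref{r1}, \eqref{r4} come from the time-dependent analogue of Lemma~\ref{2nd}: testing by $\partial_t u_\lambda$ now produces $\frac{\alpha(t)}{2}\frac{\d}{\dt}\|u_\lambda-v_\lambda\|_{L^2(S)}^2$, and after integration by parts in $t$ the extra term $\int_0^\tau\frac{\alpha'(t)}{2}\|u_\lambda-v_\lambda\|_{L^2(S)}^2\dt$ is absorbed on each $[0,T']$ via the trace inequality and Gronwall, using $\max_{[0,T']}(|\alpha|+|\alpha'|)<\infty$; this gives $L^\infty(0,T';H^1(\Omega_1))$- and $L^2(0,T';L^2(\Omega_1))$-bounds on $\Delta u_\lambda$ for each $T'<T^*$, which may degenerate as $T'\to T^*$. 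Uniqueness on $[0,T^*)$ is the usual monotonicity--Gronwall argument, exactly as in the proof of Theorem~\ref{split}.

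For the continuation, the crucial point is that $u(T^*)=v(T^*)$ a.e.\ on $S$, and this is where the extra hypothesis $u\in L^\infty(0,T^*;H^2(\Omega_1))$ is used. Indeed, then $\nabla u\in L^\infty(0,T^*;H^1(\Omega_1)^d)$, so its trace lies in $L^\infty(0,T^*;L^2(\partial\Omega_1)^d)$, whence $\partial_{\boldsymbol{\nu}}u\in L^\infty(0,T^*;L^2(S))$ and $\|\partial_{\boldsymbol{\nu}}u(t)\|_{L^2(S)}\le C$ for a.a.\ $t$. The transmission condition \eqref{f3} then gives $\|u(t)-v(t)\|_{L^2(S)}=\alpha(t)^{-1}\|\partial_{\boldsymbol{\nu}}u(t)\|_{L^2(S)}\le C\alpha(t)^{-1}\to0$ as $t\to T^*$. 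Since $u\in C([0,T^*];H^1(\Omega_1))$ and $v\in C([0,T^*];H^1(\Omega_2))$, the traces on $S$ are continuous into $L^2(S)$, and passing to the limit yields $\|u(T^*)-v(T^*)\|_{L^2(S)}=0$.

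Once the traces are matched, $U(T^*):=\tilde u(T^*)+\tilde v(T^*)$ belongs to $H^1(\Omega)$---the gluing of two $H^1$-functions with coinciding traces across $S$---and $j(U(T^*))\in L^1(\Omega)$, which is automatic in the Allen--Cahn case with $d\le4$ and in general a consequence of the chain rule $\frac{\d}{\dt}\int_{\Omega_1}j(u)\dx=\int_{\Omega_1}\xi\,\partial_t u\dx$ (note $\partial_t u\in L^2((0,T^*)\times\Omega_1)$ follows from \eqref{f1} together with $u\in L^\infty(0,T^*;H^2(\Omega_1))$) combined with the energy bounds. Since $\kappa=1$, $\pi_1=\pi_2=\pi$ and $\Omega$ is of class $C^2$, I would then solve the homogeneous Neumann problem on $(T^*,T)\times\Omega$ with initial datum $U(T^*)$ by the standard theory of evolution equations governed by the subdifferential of $\varphi_\infty$---precisely the argument of Proposition~\ref{pro} for the single domain $\Omega$ with no transmission term---obtaining $U\in H^1(T^*,T;L^2(\Omega))\cap L^\infty(T^*,T;H^1(\Omega))$ and $\Delta U,\Xi\in L^2(T^*,T;L^2(\Omega))$ with $\Xi\in\beta(U)$; the elliptic estimate on the $C^2$-domain $\Omega$ upgrades this to $U\in L^2(T^*,T;H^2(\Omega))$ and interpolation to $U\in C([T^*,T];H^1(\Omega))$. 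Setting $u:=U|_{\Omega_1}$ and $v:=U|_{\Omega_2}$ on $(T^*,T)$ and using $U(T^*)|_{\Omega_1}=u(T^*)$, $U(T^*)|_{\Omega_2}=v(T^*)$ from the $(0,T^*)$-solution, the extensions join continuously, so $u\in C([0,T];H^1(\Omega_1))$ and $v\in C([0,T];H^1(\Omega_2))$; uniqueness of the continuation is again the monotonicity--Gronwall argument for the Neumann problem. This realizes the ``$\alpha=+\infty$'' regime of \eqref{f1}--\eqref{f8}, in accordance with Theorem~\ref{cellf2}.

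The main obstacle is precisely the matching step $u(T^*)=v(T^*)$ on $S$: it is there that the extra regularity $u\in L^\infty(0,T^*;H^2(\Omega_1))$---and hence the smoothness of the domain---is indispensable, since it is what produces a uniform $L^2(S)$-bound on $\partial_{\boldsymbol{\nu}}u$ and thereby lets one defeat the blow-up $\alpha(t)\to+\infty$; without it, \eqref{f3} carries no information on the limiting traces and the topological change cannot be justified.
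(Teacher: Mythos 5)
Your proposal is correct and follows essentially the same route as the paper: local solvability on each $[0,\bar T]\subset[0,T^*)$ via Proposition~\ref{pro2}, global-in-time bounds up to $T^*$ from the $\alpha$-uniform estimates (the sign-definite term $\alpha(t)\|u-v\|_{L^2(S)}^2$ and the $V_{i,0}'$-estimate of Lemma~\ref{3rd}), trace matching at $T^*$ from $\|u(t)-v(t)\|_{L^2(S)}\le C_{\rm tr}\|u\|_{L^\infty(0,T^*;H^2(\Omega_1))}/\alpha(t)\to 0$, and continuation by the homogeneous Neumann problem with datum $\tilde u(T^*)+\tilde v(T^*)\in H^1(\Omega)$. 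Your added details (the hands-on treatment of the $\alpha'(t)$ term for the local classes and the explicit check that $j(U(T^*))\in L^1(\Omega)$) are consistent elaborations of steps the paper handles by citing Proposition~\ref{pro2} and the abstract subdifferential theory.
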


\begin{proof} For all $\bar{T} \in (0,T^*)$, 
we see from {\rm (A9)} that $\alpha \in C^{1}([0,\bar{T}]) \subset W^{1,1}(0,\bar{T})$. Therefore, applying Proposition \ref{pro2} on $[0,\bar{T}]$, we can see that there exists a unique quadruplet $(u,\xi,v,\psi)$ of functions 
\begin{gather*}
	u \in H^1\bigl(0,\bar{T}; L^2(\Omega_1) \bigr) \cap L^\infty\bigl(0,\bar{T}; H^1(\Omega_1) \bigr),
	\quad \Delta u, \xi \in L^2\bigl(0,\bar{T}; L^2(\Omega_1) \bigr), 
	\\
	v \in H^1\bigl(0,\bar{T}; L^2(\Omega_2) \bigr) \cap L^\infty\bigl(0,\bar{T}; H^1(\Omega_2) \bigr),
	\quad \Delta v, \psi \in L^2\bigl(0,\bar{T}; L^2(\Omega_2) \bigr)
\end{gather*}
such that \eqref{f1}--\eqref{f6} in the time interval $[0,\bar{T}]$, and 
initial conditions \eqref{f7}, \eqref{f8} hold. 
The arbitrariness of $\bar{T}$, we can gain the time local regularities
\eqref{r1} and \eqref{r4}. 
Moreover, we also get equations \eqref{f1}--\eqref{f6}. In this level, we do not know that $u(T^*)$ and $v(T^*)$ makes sense or not. Indeed, in Lemma~\eqref{2nd} the right hand side depends on $\alpha$. 
To begin with, since $\alpha$ is time dependent, the norm of $\alpha$ will appear on the right hand side. Thus, uniform estimates for the above classes cannot be expected. 
Therefore, going back to Lemma~\ref{3rd}. 
We can obtain the following estimate similarly to \eqref{time}
\begin{equation}
	\int_0 ^{t_n} \bigl\| \partial_t u(t) \bigr\|_{V_{1,0}'}^2 \dt  
	+ \int_0^{t_n} \bigl\| \partial _t v(t) \|_{V_{2,0}'}^2 \dt  
	\le 
	M_8
\end{equation}
for all $n \in \mathbb{N}$, 
where $M_8$ is a positive constant independent to $n \in \mathbb{N}$ and $\alpha$. 
Therefore, letting $n \to +\infty$ we obtain the regularities
$\partial _t u \in L^2(0,T^*; V_{1,0}')$ and $\partial_t v \in L^2(0,T^*; V_{2,0}')$, respectively.  
Next, multiply \eqref{f1} by $u$, add $\| u(t)\|_{L^2(\Omega_1)}^2$ to both sides, and 
integrate over $[0,t_n] \times \Omega_1$ where $t_n \nearrow T^*$ as $n \to +\infty$. 
Similarly, multiply \eqref{f2} by $v$, add $\|v(t)\|_{L^2(\Omega_2)}^2$ to both sides, and integrate over $[0,t_n] \times \Omega_2$. Then, summing up them and using the monotonicity of $\beta$ we deduce 
\begin{align}
	& \frac{1}{2}\bigl\| u(t_n) \bigr\|_{L^2(\Omega_1)}^2 
	+ \frac{1}{2}\bigl\|v(t_n) \bigr\|_{L^2(\Omega_2)}^2
	+ \int_0^{t_n} \bigl\| u (t) \bigr\|_{H^1(\Omega_1)}^2 \dt 
	+ \int_0^{t_n} \bigl\| v (t) \bigr\|_{H^1(\Omega_2)}^2 \dt
	\notag \\
	& {} + \int_0^{t_n} \alpha(t)\bigl\| u(t)-v(t)\bigr\|_{L^2(S)}^2 \dt \notag \\  
	& \le \frac{1}{2}\| u_0 \|_{L^2(\Omega_1)}^2 
	+ \frac{1}{2}\|v_0 \|_{L^2(\Omega_2)}^2
	+ \left( \frac{3}{2}+ L_1 \right)\int_0^{t_n} \bigl\| u(t) \bigr\|_{L^2(\Omega_1)}^2 \dt 
	\notag \\
	& \quad {}+ \left( \frac{3}{2}+ L_2 \right)\int_0^{t_n} \bigl\| v(t) \bigr\|_{L^2(\Omega_2)}^2 \dt 
	+ \frac{1}{2}\int_0^{t_n} \bigl\| g_1 (t) \bigr\|_{L^2(\Omega_1)}^2 \dt 
	+ \frac{1}{2}\int_0^{t_n} \bigl\| g_2 (t)\bigr\|_{L^2(\Omega_2)}^2 \dt.
\end{align}
Therefore, the {G}ronwall inequality implies that the similar estimate 
as in Lemma \ref{1st} holds, that is, there exists a constant $M_9>0$ such that
\begin{gather}
	\bigl\| u(t_n) \bigr\|_{L^2(\Omega_1)}^2 + \bigl\|v(t_n) \bigr\|_{L^2(\Omega_2)}^2 \le M_9, 
	\label{bound}\\
	\int_0^{t_n} \bigl\| u (t) \bigr\|_{H^1(\Omega_1)}^2 \dt 
	+ \int_0^{t_n} \bigl\| v (t)\bigr\|_{H^1(\Omega_2)}^2 \dt + 
	\int_0^{t_n} \alpha(t)\bigl\| u(t)-v(t)\bigr\|_{L^2(S)}^2 \dt \le M_9 \notag
\end{gather}
for all $n \in \mathbb{N}$, 
where $M_9$ is independent to $n \in \mathbb{N}$ and $\alpha:=\alpha(t)$. 
In order to obtain \eqref{r7}, 
we multiplying \eqref{f1} by $\xi$. 
Going back to the level of {Y}osida approximation to calculate 
$\xi$ against $\partial _t u$, we can also obtain 
the estimate similar to \eqref{btlam}
\begin{equation*}
	\int_0^{t_n} \bigl\| \xi(t) \bigr\| _{L^2(\Omega_1)}^2 \dt \le M_9.
\end{equation*} 
Letting $n \to +\infty$ we obtain the regularities 
$u \in H^1(0,T^*; V_{1,0}')\subset C([0,T^*]; V_{1,0}')$, $u \in L^2(0,T^*;H^1(\Omega_1))$, 
and $\xi \in L^2(0,T^*;L^2(\Omega_1))$. 
Therefore, $u(T^*)$ makes sense in $V_{1,0}'$. 
Now. from the boundedness \eqref{bound} we see that there exist a subsequence (not relabeled) and a target 
$\bar{u} \in L^2(\Omega_1)$ such that 
\begin{equation*}
	u(t_n) \to \bar{u} \quad {\rm weakly~in~} L^2(\Omega_1), \quad 
	u(t_n) \to u(T^*) \quad {\rm in~} V_{1,0}'
\end{equation*}
as $n \to +\infty$. By the uniqueness of the limit, the two coincide, that is, $u(T^*) \in L^2(\Omega_1)$. 
This implies \eqref{r2}. 
Analogously we can obtain $v(T^*) \in L^2(\Omega_2)$, \eqref{r5}, and \eqref{r8}. 
\smallskip 

Hereafter, we discuss the extension beyond $T^*$ as the solution of the homogeneous {N}eumann problem. 
Assume that $u \in C([0,T^*];H^1(\Omega_1)) \cap L^\infty(0,T^*;H^2(\Omega_1))$ and $v \in C([0,T^*];H^1(\Omega_2))$. 
These regularity conditions ensure that $u$ and $v$ possess traces $u=\gamma_0 u$, $\partial_{\boldsymbol{\nu}}u = \gamma_{\rm N}\nabla u$, and $v=\gamma_0 v$ on $S$ at $t=T^*$. 
Now recalling \eqref{f3}  to deduce 
\begin{align*}
	\bigl\| v (t_n)- u (t_n) \bigr\|_{L^2(S)} 
	 & = \frac{1}{\alpha(t_n)} 
	\bigl\| \partial _{\boldsymbol{\nu}} u (t_n) \bigr\|_{L^2(S)} \\
	& \le \frac{1}{\alpha(t_n)} C_{\rm tr} \|  u \|_{L^\infty(0,T^*;H^2(\Omega_1))}
\end{align*}
for all $n \in \mathbb{N}$, where $C_{\rm tr}>0$ is a positive constant independent of $n \in \mathbb{N}$. 
Indeed, there exist $C_{\rm tr}>0$ such that 
$\| \partial _{\boldsymbol{\nu}} z \|_{L^2(S)} \le C_{\rm tr} \| z \|_{H^2(\Omega_1)}$ for all $z \in H^2(\Omega_1)$. 
Therefore, letting $n \to +\infty$, we get $u(T^*)=v(T^*)$ a.e.\ on $S$. 
The argument that follows is actually simple: 
Taking $u(T^*)$ and $v(T^*)$ at $t=T^*$ as initial conditions, since $U(T^*):=\tilde{u}(T^*)+\tilde{v}(T^*) \in H^1(\Omega)$, a unique solution to the standard homogeneous {N}eumann problem exists, and we can extend the solution beyond $T^*$. Moreover, 
as seen from the discussion of Theorem~\ref{cellf2}, the solutions $u$ and $v$ are continuously connected across the blow-up time $t=T^*$ of $\alpha$. Namely, as the regularity from $t=0$ to $t=T$ we get 
$u \in C( [0,T];H^1(\Omega_1))$ and $v \in C( [0,T];H^1(\Omega_2))$. 
\end{proof}

\section{Conclusion}

In this paper, we discussed the well-posedness of {A}llen--{C}ahn type equations under {R}obin type transmission conditions in a form closely resembling the classical transmission problem, 
as well as the asymptotic analysis between problems and their rates of convergence. 
Although we considered two major cases for the domain settings (see, {\sc {F}igures} 1 and 2), 
{C}ase 2 requires less complicated function space settings compared to {C}ase 1, and thus we focused our discussion on {C}ase 1. 
Indeed, in the {C}ase 2 the triple junction doesn't appear. Therefore, $\tilde{H}^{1/2}(S)=H^{1/2}(S)=H^{1/2}(\partial \Omega_1)$. 
\smallskip

Based on the abstract theory of evolution equations governed by subdifferential operators, 
the case $0<\alpha< +\infty$ can be resolved by Proposition~\ref{pro}, 
while the arguments corresponding to $\alpha \to 0$
were discussed in {T}heorem~\ref{split}, and those corresponding to $\alpha \to +\infty$ 
were discussed in {T}heorems~\ref{cellf} and \ref{cellf2}. 
Regarding the gradient system structure underlying these arguments, 
Theorem~\ref{Mosco} provides a clear relationship with {M}osco convergence. 
This problem can also be discussed when the permeability 
$\alpha$ depends on time. 
In particular, based on {P}roposition~\ref{pro2}, even in the setting where 
$\alpha$ blows up at $T^*<T$ before the terminal time $T$, 
the system itself continues, and in Theorem~\ref{scenario} we discussed the solvability of the interesting problem where the discontinuous state at the contact surface $S$ 
between the two domains $\Omega_1$ and $\Omega_2$ before time $T^*$ becomes continuous beyond $T^*$ 
and the evolution continues while satisfying a different equation. 
These problems originate from \emph{gap junctions} that govern cell connections appearing in biology, 
and it can be said that they represent a drastic change from a transmission problem between two domains to a problem in a single domain $\Omega$, as if the cells assimilate through cell fusion in a sense when the permeability diverges. 
Although, the equation is a very simple parabolic equation, it is expected that extensions to more complex nonlinear parabolic equations and other equations are possible following this concept.
\smallskip
 
Finally, we conclude this paper by discussing in detail the relationship with the {C}ahn--{H}illiard equation under dynamic boundary conditions, which was not addressed in the previous sections. 
The results rigorously proved in this paper provide a clearer interpretation of the three types of the {GMS} model \cite{CF15, Gal06, GMS11}, the {LW} model \cite{CFW20, LW19}, and the {KLLM} model \cite{KLLM21} positioned in between. 
Let us first recall the boundary conditions appearing in the {KLLM} model. 
In the {C}ahn--{H}illiard equation under these dynamic boundary conditions, 
we consider the {C}ahn--{H}illiard equation in both the bulk $\Omega$ 
and on the surface $\partial \Omega$ as follows:
\begin{align}
	\partial _t u -\Delta \mu =0 
	& \quad {\rm in~} (0,T) \times \Omega, \label{bulk1}\\
	 \mu = -\Delta u + {\mathcal W}'(u) 
	 & \quad {\rm in~} (0,T) \times \Omega, \label{bulk2}\\
	\partial _t u +\partial _{\boldsymbol{n}} \mu -\Delta \theta =0 
	& \quad {\rm on~}(0,T) \times \partial \Omega, \label{surf1}\\
	\theta = \partial _{\boldsymbol{n}} u -\Delta_\Gamma u + {\mathcal W}'(u)
	& \quad {\rm on~}(0,T) \times \partial \Omega, \label{surf2}
\end{align}
where ${\mathcal W}'$ is the derivative of some double well potential, 
for example ${\mathcal W}'(u) =u^3-u$; 
the initial conditions are omitted here.
In other word, the system is a type of transmission problem between the bulk system 
\eqref{bulk1}--\eqref{bulk2} and the surface system \eqref{surf1}--\eqref{surf2}. 
Among the multiple unknown functions, the important ones are the bulk chemical potential $\mu$ 
and the surface chemical potential $\theta$, and the following {R}obin type boundary condition is set as the transmission condition connecting them:
\begin{equation*}
	L \partial _{\boldsymbol{n}} \mu = \theta - \mu
	\quad {\rm on~}(0,T) \times \partial \Omega,
\end{equation*}
(the parameter $L$ corresponds to $1/\alpha$ in this paper). 
If $L \to +\infty$ ($\alpha \to 0$), the above {R}obin type condition gives us 
\begin{equation*}
	\partial _{\boldsymbol{n}} \mu = 0
	\quad {\rm on~}(0,T) \times \partial \Omega,
\end{equation*}
and the system corresponds to {LW} model (in \eqref{surf1}, $\partial _{\boldsymbol{n}} \mu$ disappears).
This situation can be interpreted that the permeability constant $1/L$ converges to $0$, the equations 
\eqref{bulk1} and \eqref{surf1} are completely split. However, in this model these systems are indirectly connected by \eqref{surf2}. 
If $L \to 0$ ($\alpha \to +\infty$), the above condition gives us 
\begin{equation*}
	\theta = \mu
	\quad {\rm on~}(0,T) \times \partial \Omega,
\end{equation*}
and the system corresponds to {GMS} model.
This can be interpreted that the permeability converges to $+\infty$, 
the systems \eqref{bulk1}--\eqref{bulk2} and \eqref{surf1}--\eqref{surf2} are completely linked. 
These systems are directly connected and behave as if they were the same system. 
\smallskip

Such an interpretation has been justified by the results of this paper. 
Finally, as a future development, in the connection among the above three models, 
we were able to classify them by the movement of parameters through {R}obin type conditions related to the normal derivative appearing in one of the equations. 
Similarly, if we consider the same thing for another variable, namely $u$
appearing in \eqref{bulk1} and \eqref{surf1} of the above system, 
two models at opposite extremes would emerge, 
and considering both of them, 
models at two opposite extremes would appear again. 
Moreover, while the problem treated here is a transmission problem between the bulk and surface, 
there exists another line of research that considers the transmission problem between a main domain and a thin domain, and characterizes dynamic boundary conditions through the zero limit of the thickness of thin domain \cite{CR90, Lie13, GLR23, Miu25}. For example, in the Case~2, $\Omega_1$ is the main domain and 
the thin domain $\Omega_2$ disappears as the thickness-zero limit.  
Although this type of research is still developing, together with Attouch's results that lie at the beginning of this study, the asymptotic analysis of various problems is also an interesting research subject.
Further developments in this field related to these issues are also expected.


\end{document}